\newtheorem{lemma}{Lemma}
\newtheorem{prop}[lemma]{Proposition}
\newtheorem{theorem}[lemma]{Theorem}
\newtheorem{rem}[lemma]{Remark}
\newtheorem{definition}[lemma]{Definition}
\numberwithin{equation}{section}
\numberwithin{lemma}{section}
\newcommand{\C}{\mathbb{C}}    
\newcommand{\N}{\mathbb{N}}    
\newcommand{\R}{\mathbb{R}}    
\newcommand{\wh}{\widehat}
\renewcommand{\le}{\leqslant}
\renewcommand{\ge}{\geqslant}
\newcommand{\bs}{\backslash}
\newcommand{\ol}{\overline}
\newcommand{\la}{\langle}
\newcommand{\ra}{\rangle}
\newcommand{\bo}{\mathscr{O}} 
\newcommand{\eps}{\epsilon}
\newcommand{\ID}{{\text{Id}}}
\newcommand{\re}{\text{Re}}
\newcommand{\im}{\text{Im}}
\DeclareMathOperator{\DAS}{DAS}
\DeclareMathOperator{\AS}{AS}
\newcommand{\er}{\eqref}
\newcommand{\bp}{ \begin{proof} }
	\newcommand{\ep}{\hfill \end{proof} }
\newcommand{\be}{ \begin{equation} }
\newcommand{\ee}{ \end{equation} }
\newcommand{\tp}{\mathsf{T}}
\newcommand{\dm}{\mathsf{M}} 
\newcommand{\td}{\boldsymbol{\delta}}  
\newcommand{\sd}{\mathcal{S}}  
\newcommand{\tz}{\mathcal{T}}  
\newcommand{\cV}{\mathcal{V}}
\newcommand{\cW}{\mathcal{W}}
\newcommand{\dN}{\mathbb{N}^d}
\newcommand{\dR}{\mathbb{R}^d}
\newcommand{\dT}{\mathbb{T}^d}
\newcommand{\dZ}{\mathbb{Z}^d}
\newcommand{\dDC}{\mathscr{D}(\mathbb{R}^d)}  
\newcommand{\dLp}[1]{L_{#1}(\mathbb{R}^d)}
\newcommand{\dLr}[2]{(L_{#1}(\mathbb{R}^d))^{#2}}
\newcommand{\dLrs}[3]{(L_{#1}(\mathbb{R}^d))^{#2\times #3}}
\newcommand{\dTLp}[1]{L_{#1}(\mathbb{T}^d)}
\newcommand{\dlp}[1]{l_{#1}(\mathbb{Z}^d)}
\newcommand{\dlr}[2]{(l_{#1}(\mathbb{Z}^d))^{#2}}
\newcommand{\dlrs}[3]{(l_{#1}(\mathbb{Z}^d))^{#2\times #3}}
\newcommand{\dsq}{l(\mathbb{Z}^d)} 
\newcommand{\supp}{\text{supp}}
\newcommand{\tr}{\text{Trace}}
\newcommand{\vertiii}[1]{{\left\vert\kern-0.25ex\left\vert\kern-0.25ex\left\vert #1 
		\right\vert\kern-0.25ex\right\vert\kern-0.25ex\right\vert}}
\begin{document}
	
	\title[Mixed Dilation Framelets]{Framelets and Wavelets with Mixed Dilation Factors}

\author{Ran Lu}

\address{School of Mathematics,
	Hohai University, Nanjing,\quad Jiangsu, China 211106.
	\quad {\tt  rlu3@hhu.edu.cn}
}
\thanks{The research of the author was supported by the National Natural Science Foundation of China under grants 12201178 and 12271140.
}


\makeatletter \@addtoreset{equation}{section} \makeatother	

\begin{abstract} As a main research area in applied and computational harmonic analysis, the theory and applications of framelets have been extensively investigated. Most existing literature is devoted to framelet systems that only use one dilation matrix as the sampling factor. To keep some key properties such as directionality, a framelet system often has a high redundancy rate. To reduce redundancy, a one-dimensional tight framelet with mixed dilation factors has been introduced for image processing. Though such tight framelets offer good performance in practice, their theoretical properties are far from being well understood. In this paper, we will systematically investigate framelets with mixed dilation factors, with arbitrary multiplicity in arbitrary dimensions. We will first study the discrete framelet transform employing a filter bank with mixed dilation factors and discuss its various properties. Next, we will introduce the notion of a discrete affine system in $\dlp{2}$ and study discrete framelet transforms with mixed dilation factors. Finally, we will discuss framelets and wavelets with mixed dilation factors in the space $\dLp{2}$.
\end{abstract}

\keywords{framelets, wavelets, discrete framelet transforms, mixed dilation factors}

\subjclass[2020]{42C40, 42C15, 41A25, 41A35, 65T60}
\maketitle

\section{Introduction}

In this paper, we study framelets and wavelets with mixed dilation factors. To explain our motivations, let us briefly recall some definitions of classical wavelet and framelet theory.

A $d\times d$ integer matrix $\dm$ is called a \emph{dilation matrix} if all its eigenvalues are greater than $1$ in modulus. By $f\in \dLrs{2}{r}{s}$ we mean that $f$ is an $r\times s$ matrix of $\dLp{2}$ functions. In particular, $(\dLp{2})^r:=\dLrs{2}{r}{1}$. For all $f\in \dLrs{2}{r}{s}$ and $g\in\dLrs{2}{t}{s}$, define
 $$\la f,g\ra_{\dLp{2}}:=\int_{\dR}f(x)\ol{g(x)}^{\tp}dx.$$ 
Let $\phi:=(\phi_1,\dots,\phi_r)^{\tp},\tilde{\phi}:=(\tilde{\phi}_1,\dots,\tilde{\phi}_r)^{\tp}\in(\dLp{2})^r$ and $\psi:=(\psi_1,\dots,\psi_s)^{\tp},\tilde{\psi}:=(\tilde{\psi}_1,\dots,\tilde{\psi}_s)^{\tp}\in(\dLp{2})^s$. Define an \emph{affine systems} via
\be\label{as}\AS(\{\phi;\psi\}):=\{\phi_\ell(\cdot-k): \ell=1,\dots,r;k\in\dZ\}\cup\{\psi_{\ell,\dm^j;k}:\ell=1,\dots,s; j\in\dN_0;k\in\dZ\},\ee
where $\psi_{\ell,\dm^j;k}:=|\det(\dm)|^{\frac{j}{2}}\psi_\ell(\dm^j\cdot-k)$. Similarly define $\AS(\{\tilde{\phi};\tilde{\psi}\})$. We say that $\{\phi;\psi\}$ is an \emph{$\dm$-framelet (or $\dm$-adic framelet)} in $\dLp{2}$ if $\AS(\{\phi;\psi\})$ is a frame in $\dLp{2}$, i.e., there exist positive constants $C_1$ and $C_2$ such that
\be\label{wav:fr}C_1\|f\|_{\dLp{2}}^2\le \sum_{\ell=1}^r\sum_{k\in\dZ}|\la f,\phi_\ell(\cdot-k)\ra_{\dLp{2}}|^2+\sum_{\ell=1}^s\sum_{j=0}^\infty\sum_{k\in\dZ}|\la f,\psi_{\ell,\dm^j;k}\ra_{\dLp{2}}|^2\le C_2\|f\|_{\dLp{2}}^2,\quad \forall f\in\dLp{2},\ee
where $|\la f, \psi_{\ell,\dm^j;k}\ra|^2:=\|\la f, \psi_{\ell,\dm^j;k}\ra\|_{l_2}^2$. The number $r$ is called the \emph{multiplicity} of the wavelet frame. $\{\phi;\psi\}$ is called a \emph{scalar framelet} if $r=1$, and a vector framelet or multiframelet if $r>1$. We will use the term framelet to refer to both unless specified. For $\phi,\tilde{\phi}\in(\dLp{2})^r$ and $\psi,\tilde{\psi}\in(\dLp{2})^s$, we say that $(\{\phi;\psi\},\{\tilde{\phi},\tilde{\psi}\})$ is a \emph{dual $\dm$-framelet} if $(\AS(\{\phi;\psi\},\AS(\{\tilde{\phi};\tilde{\psi}\})$ is a pair of dual frames in $\dLp{2}$, that is, both $\{\phi;\psi\}$ and $\{\tilde{\phi},\tilde{\psi}\}$ are $\dm$-wavelet frames in $\dLp{2}$ and satisfy
\be\label{dual:fr}f=\sum_{\ell=1}^r\sum_{k\in\dZ}\la f,\phi_\ell(\cdot-k)\ra_{\dLp{2}}\tilde{\phi}_\ell(\cdot-k)+\sum_{\ell=1}^s\sum_{j=0}^\infty\sum_{k\in\dZ}\la f,\psi_{\ell,\dm^j;k}\ra_{\dLp{2}}\tilde{\psi}_{\ell,\dm^j;k},\quad \forall f\in\dLp{2},\ee
with the above series converging unconditionally in $\dLp{2}$. We say that $(\{\phi;\psi\},\{\tilde{\phi},\tilde{\psi}\})$ is a \emph{biorthogonal $\dm$-wavelet} in $\dLp{2}$ if $(\AS(\{\phi;\psi\},\AS(\{\tilde{\phi};\tilde{\psi}\})$ is a biorthogonal basis in $\dLp{2}$. $\dm$-adic framelets/wavelets are often known as framelets/wavelets with uniform sampling/dilation factors.

By $\dlrs{0}{r}{s}$ we denote the linear space of all finitely supported matrix-valued sequences, i.e., $u\in \dlrs{0}{r}{s}$ if $u:\dZ\to\C^{r\times s}$ is a sequence such that $\{k\in\dZ: u(k)\neq 0\}$ is a finite set. For $a\in \dlrs{0}{r}{r}$, we define $\wh{a}(\xi):=\sum_{k\in\dZ}a(k)e^{-ik\cdot\xi}$. We say that $\phi\in(\dLp{2})^r$ is an \emph{$\dm$-refinable vector function} with a refinement mask/filter $a\in\dlrs{0}{r}{r}$ if 
\be\label{ref:phi}\phi(x)=\sum_{k\in\dZ}a(k)\phi(\dm x-k),\qquad x\in\dR,\ee
or equivalently 
\be\label{ref:phi:f}\wh{\phi}(\dm^{\tp}\xi)=\wh{a}(\xi)\wh{\phi}(\xi),\qquad\xi\in\dR.\ee
Here $\wh{f}(\xi):=\int_{\dR}f(x)e^{-ix\cdot\xi}$ is the Fourier transform of $f\in \dLp{1}$, which can be naturally extended to $\dLp{2}$ functions and tempered distributions. $\wh{\phi}$ is the vector obtained by taking entry-wise Fourier transform on $\phi$. A dual $\dm$-framelet $(\{\phi;\psi\},\{\tilde{\phi};\tilde{\psi}\})$ is often derived from a pair of refinable vector functions. Let $\phi,\tilde{\phi}\in(\dLp{2})^r$ be $\dm$-refinable vector functions with refinement filters $a,\tilde{a}\in\dlrs{0}{r}{r}$ respectively. As shown in \cite[Theorem 7.1.8]{hanbook}, under mild conditions of $a$ and $\tilde{a}$, one can construct a dual $\dm$-framelet $(\{\phi;\psi\},\{\tilde{\phi};\tilde{\psi}\})$ satisfying
\be\label{ref:filter}\wh{\psi}(\dm^{\tp}\xi)=\wh{b}(\xi)\wh{\psi}(\dm),\qquad\wh{\tilde{\psi}}(\dm^{\tp}\xi)=\wh{\tilde{b}}(\xi)\wh{\tilde{\psi}}(\dm),\ee
for a.e. $\xi\in\dR$, for some matrix-valued sequences/filters $b,\tilde{b}\in\dlrs{0}{s}{r}$ such that $(\{a;b\},\{\tilde{a};\tilde{b}\})$ forms a \emph{dual $\dm$-framelet filter bank}, i.e.,
\be\label{dual:fb}\ol{\wh{a}(\xi)}^{\tp}\wh{\tilde{a}}(\xi+2\pi\omega)+\ol{\wh{b}(\xi)}^{\tp}\wh{\tilde{b}}(\xi+2\pi\omega)=\td(\omega)I_r,\quad \omega\in\Omega_{\dm},\quad \xi\in\dR,\ee
where
$$\td(x)=\begin{cases}1, &x=0,\\
0, &x\in\dR\bs\{0\}.\end{cases}, $$
$I_r$ is the $r\times r$ identity matrix, and
$$\Omega_{\dm}:=(\dm^{-\tp}\dZ)\cap[0,1)^d.$$ 

Framelets generalize wavelets by allowing redundancy properties in their systems, which offers flexibility in their constructions. One major problem with multi-dimensional real wavelets is the lack of directionality, which significantly reduces the effectiveness of capturing edge singularities of the data in high dimensions. The added redundancy of framelets improves the shift-invariance property and increases the directionality, which greatly improves the effectiveness when handling multi-dimensional data (\cite{CD95,dhrs03,han97,han12,han14,hz14,hzz16,NGK99,NGK01,rs97,SBK05}). The degree of redundancy of a dual framelet is measured by its \emph{redundancy rate}. Here, let us roughly explain the redundancy rate of a wavelet/framelet transform. Let $\dm$ be a $d\times d$ dilation matrix, and let $a,\tilde{a}\in\dlrs{0}{r}{r}$ and $b,\tilde{b}\in\dlrs{0}{s}{r}$ be such that $(\{a;b\},\{\tilde{a};\tilde{b}\})$ is a dual $\dm$-framelet filter bank. A discrete framelet transform employing the filter bank $(\{a;b\},\{\tilde{a};\tilde{b}\})$ is implemented through convolution and sampling operations. For an input data $v\in(\dsq)^{1\times r}$, at the decomposition procedure of the transform, $v$ is convolved with $a^\star$ and $b^\star$ as $v*a^\star:=\sum_{k\in\dZ}v(k)a^\star(\cdot-k)$ and $v*b^\star$, where $u^\star:=\ol{u(-\cdot)}^{\tp}$ for any matrix-valued filter $u$, and then downsample as $w_1:=(v*a^\star)\downarrow \dm$ and $w_2:=(v*b^\star)\downarrow \dm$, where the down sampling operator is defined as in \er{dsamp}. $w_1$ and $w_2$ are called the framelet coefficients. A $J$-level discrete framelet transform implements the decomposition recursively $J$-times with $v$ being replaced by $(v*a^\star)\downarrow \dm$ as the new input data at each step. At the reconstruction procedure, the data $w_j$ is upsampled for $j=1,2$ as $w_j\uparrow\dm$, where the upsampling operator is defined as in \er{usamp}, and then is convolved with $\tilde{a}$ and $\tilde{b}$ as $(w_j\uparrow\dm)*\tilde{a}$ and $(w_j\uparrow\dm)*\tilde{b}$. By summing up all reconstructed data, we obtain a single output data. In many applications, we consider an initial data $v\in\dlrs{0}{1}{r}$ which has finite support and is real-valued, say $v$ contains $n$ real numbers. One first extends $v$ to a periodic sequence $v^{per}\in(\dsq)^{1\times r}$, and uses $v^{per}$ as the input data of the discrete framelet transform. Noting that $v^{per}*a^\star$ and $v^{per}*b^\star$ are both periodic sequences which have the same period as $v^{per}$, both sequences contain only finitely many real numbers, that is, $|\{\re((v^{per}*a^\star)(n))_l,\im((v^{per}*a^\star)(n))_l: n\in\dZ; l=1,\dots,r\}|:=n_a$ and $|\{\re((v^{per}*b^\star)(n))_l,\im((v^{per}*b^\star)(n))_l: n\in\dZ;l=1,\dots,r\}|:=n_b$ are finite numbers. Thus, the framelet coefficients $w_1$ and $w_2$ contain finitely many real numbers. If one implements a $J$-level discrete framelets transform, the framelet coefficients at the $J$-th level contain finitely many, say $N$ real numbers. The ratio $N/n$ is the redundancy rate of the $J$-level discrete framelet transform. For a $J$-level discrete framelet transform employing a dual $\dm$-framelet filter bank $(\{a;b\},\{\tilde{a};\tilde{b}\})$ with $a,\tilde{a}\in\dlrs{0}{r}{r}$ and $b,\tilde{b}\in\dlrs{0}{s}{r}$, its redundancy rate only depends on the level $J$, the number $s$ and the dilation factor $\dm$.

%

 Though the high redundancy rate of a dual framelet improves performance in practice, the computational cost will also increase as dimension increases. To our best knowledge, all currently known ($\dm$-adic) framelet systems with shift invariance property and directionality have various (high) degrees of redundancy. Recently, a version of tensor product complex tight framelets (TP-$\C$TF) was introduced in \cite{hzz16}, which offers good directionality and has a significantly lower redundancy rate compared with existing systems in the literature. This newly developed TP-$\C$TF is a particular case of \emph{mixed dilated framelet systems}. To be specific, let $\psi^0=(\psi^0_1,\dots,\psi^0_r)^{\tp}\in\dLr{2}{r},\psi^1,\dots,\psi^s\in \dLp{2}$ and $\dm_0,\dots,\dm_s$ be $d\times d$ dilation matrices. Define an affine system via
 \be\label{afs}\begin{aligned}\AS(\{\psi^l!\dm_l\}_{l=0}^s):=&\{\psi^0_q(\cdot-k):q=1,\dots,r; k\in\dZ\}\\
 	&\cup\{|\det(\dm_0^{-1}\dm_l)|^{\frac{1}{2}}\psi^l_{\dm_0^j;\dm_0^{-1}\dm_lk}:l=1,\dots,s;j\in\N_0;k\in\dZ\}.
 \end{aligned}\ee
 We say that $(\{\psi^l!\dm_l\}_{l=0}^s,\{\tilde{\psi}^l!\dm_l\}_{l=0}^s)$ is a \emph{dual framelet in $\dLp{2}$ with mixed dilation factors} if both $(\AS(\{\psi^l!\dm_l\}_{l=0}^s)$ and $\AS(\{\tilde{\psi}^l!\dm_l\}_{l=0}^s))$ are frames of $\dLp{2}$ and satisfy
 \be\label{dual:frt}\begin{aligned}\la f,g\ra_{\dLp{2}}=&\sum_{k\in\dZ}\la f,\psi^0(\cdot-k)\ra_{\dLp{2}}\la \tilde{\psi}^0(\cdot-k),g\ra_{\dLp{2}}\\
 	&+\sum_{l=1}^s\sum_{j=0}^\infty\sum_{k\in\dZ}|\det(\dm_0^{-1}\dm_l)|\la f,\psi^l_{\dm_0^j;\dm_0^{-1}\dm_lk}\ra_{\dLp{2}}\la \tilde{\psi}^l_{\dm_0^j;\dm_0^{-1}\dm_lk},g\ra_{\dLp{2}}\end{aligned}\ee
 for all $f,g\in\dLp{2}$, with the above series converging absolutely. We say that $(\{\psi^l!\dm_l\}_{l=0}^s,\{\tilde{\psi}^l!\dm_l\}_{l=0}^s)$ is a \emph{biorthogonal wavelet in $\dLp{2}$ with mixed dilation factors} if $(\AS(\{\psi^l!\dm_l\}_{l=0}^s),\AS(\{\tilde{\psi}^l!\dm_l\}_{l=0}^s))$ is a pair of biorthogonal bases for $\dLp{2}$. Unlike a traditional $\dm$-adic framelets system in which all generators are dilated by the same factor $\dm$, we now dilate the framelet generators by different dilation factors. Allowing mixed dilation factors creates flexibility in constructing framelets and makes achieving a low redundancy rate while keeping other desired properties possible. There is a small amount of work in the literature on lowering redundancy rates of wavelet/framelet systems by considering mixed dilation factors (e.g., \cite{SBBV03,WB12}).

 To our best knowledge, the only existing work in the literature discussing mixed dilated framelets theoretically is \cite{hzz16}, which only addresses tight framelets with multiplicity $r=1$. In this paper, we shall systematically develop the theory of framelets and wavelets with mixed dilation factors with arbitrary multiplicity. Our work will cover all classical $\dm$-adic framelets/wavelets as special cases. The structure of the paper is as follows: In Section~\ref{sec:dft:mix}, we introduce the notion of a discrete framelet transform with mixed dilation factors and study its various properties, including the perfect reconstruction property and the stability of filter banks. In Section~\ref{sec:wv:mix}, we introduce the notion of a wavelet filter bank with mixed dilation factors, which is a special case of framelet filter banks. We will provide a characterization of wavelet filter banks. We will further study discrete framelet/wavelet transforms through affine systems in Section~\ref{sec:fr:l2}. Finally, in Section~\ref{sec:wv:mix}, we shall connect discrete framelet/wavelet filter banks and framelets/wavelets in $\dLp{2}$. The work presented in this paper is an extension of the theory of tight framelet filter banks with mixed dilation factors in \cite{hzz16}, the new elements are the following: (1) the definition of a dual multiframelet with mixed dilation factors is a straight forward generalization of the definition of a scalar tight framelet with mixed dilation factors which was introduced in \cite{hzz16}. Thus, there is no surprise that several corresponding properties of a scalar tight framelets transfer to a dual multiframelet, with only minor modification; (2) we define a wavelet filter bank with mixed dilation factors by following the way we do in the uniform sampling case, but the study is more difficult under the setting of mixed dilation factors. We will discuss this in detail in Section~\ref{sec:wv:mix}.

\section{Discrete Framelet Transforms with Mixed Dilation Factors}
\label{sec:dft:mix}
In this section, we provide a self-contained systematic study of discrete framelet transforms with mixed dilation factors. 

By $(\dsq)^{r\times s}$ we denote the linear space of all sequences $u:\dZ\to\C^{r\times s}$. For a dilation matrix $\dm$ and a matrix filter, we define:
\begin{itemize}
	\item The \emph{subdivision operator:}
	\be\label{sd}[\sd_{a,\dm}u](n):=|\det(\dm)|^{\frac{1}{2}}\sum_{k\in\dZ}u(k)a(n-\dm k),\quad n\in\dZ,\quad a\in\dlrs{0}{r}{q},\quad u\in(\dsq)^{s\times r}.\ee
	
	\item The \emph{transition operator:}
	\be\label{tz} [\tz_{b,\dm}u](n):=|\det(\dm)|^{\frac{1}{2}}\sum_{k\in\dZ}u(k)\ol{b(k-\dm n)}^{\tp},\quad n\in\dZ,\quad b\in\dlrs{0}{t}{r},\quad  u\in(\dsq)^{s\times r}.\ee
\end{itemize}

For any filter $u\in(\dsq)^{s\times r}$, define the following notations:

\begin{itemize}
	\item \emph{The down sampling sequence with respect to $\dm$}:
	\be\label{dsamp} [u\downarrow \dm](k)=u(\dm k),\qquad k\in\dZ.\ee
	
	\item \emph{The up sampling sequence with respect to $\dm$}: 
	\be\label{usamp}[u\uparrow \dm](k)=\begin{cases}
		u(\dm^{-1}k), &k\in \dm\dZ\\[0.5cm]
		0, &\text{else}
	\end{cases}.\ee
	
	\item The filter $u^{\star}$ is defined via $u^{\star}(k):=\ol{u(-k)}^{\tp}$ for all $k\in\dZ$.
	
	\item The \emph{(formal) Fourier series} of $u$:
	\be\label{mf:fs}\wh{u}(\xi):=\sum_{k\in\dZ}u(k)e^{-ik\cdot\xi},\qquad \xi\in\dR.\ee
\end{itemize}

The convolution of two filters $u_1$ and $u_2$ is defined via
\be\label{conv:fil}(u_1*u_2)(n):=\sum_{k\in\dZ}u_1(k)u_2(n-k).\ee
The subdivision and transition operators can be represented by
\be\label{sd:tz:samp}\sd_{a,\dm}u=|\det(\dm)|^{\frac{1}{2}}(u\uparrow\dm)*a,\qquad \tz_{b,\dm}u=|\det(\dm)|^{\frac{1}{2}}(u*b^{\star})\downarrow\dm,\ee

By letting $\Omega_{\dm}:=[\dm^{-\tp}\dZ]\cap[0,1)^d$, we have
\be\label{sd:ft}\wh{\sd_{a,\dm}u}(\xi)=|\det(\dm)|^{\frac{1}{2}}\wh{u}(\dm^{\tp}\xi)\wh{a}(\xi),\ee
\be\label{tz:ft}\wh{\tz_{b,\dm}u}(\xi)=|\det(\dm)|^{-\frac{1}{2}}\sum_{\omega\in\Omega_{\dm}}\wh{u}(\dm^{-\tp}\xi+2\pi\omega)\ol{\wh{b}(\dm^{-\tp}\xi+2\pi\omega)}^{\tp}.\ee

For $u\in\dlrs{2}{t}{r}$ and $v\in\dlrs{2}{s}{r}$, we define:
\be\label{l2:inprod}\la u,v\ra_{\dlp{2}}:=\sum_{k\in\dZ}u(k)\ol{v(k)}^{\tp}.\ee
Note that $\la u,v\ra_{\dlp{2}}\in\C^{t\times s}$ is a matrix of complex numbers.\\

We now discuss some important properties of the subdivision and transition operators. The following two lemmas are the matrix-valued filter versions of \cite[Lemma 2.3 and Lemma 4.3]{han13}.

\begin{lemma}\label{sdtr} Let $a\in\dlrs{0}{r}{q}$ be a finitely supported matrix-valued filter and $\dm$ be a $d\times d$ dilation matrix. Then, the following operators are well-defined:
	$$\sd_{a,\dm}:\dlrs{2}{s}{r}\to\dlrs{2}{s}{q},\quad \tz_{a,\dm}:\dlrs{2}{s}{q}\to\dlrs{2}{s}{r}.$$
	Moreover, we have $\sd_{a,\dm}=\tz_{a,\dm}^{\star}$. That is
	\be\label{sd:tz:adj}\la \sd_{a,\dm}v,w\ra_{\dlp{2}}=\la v,\tz_{a,\dm}w\ra_{\dlp{2}},\qquad  v\in\dlrs{2}{s}{r},w\in\dlrs{2}{s}{q}.\ee
\end{lemma}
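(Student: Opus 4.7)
The plan is to establish boundedness of the two operators via the convolution--sampling representations in \er{sd:tz:samp}, and then verify the adjoint identity \er{sd:tz:adj} by a direct computation that combines Fubini with a single change of summation index.

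For boundedness, I would start from the identities
$$\sd_{a,\dm}u=|\det(\dm)|^{\frac{1}{2}}(u\uparrow\dm)*a,\qquad \tz_{a,\dm}u=|\det(\dm)|^{\frac{1}{2}}(u*a^{\star})\downarrow\dm$$
supplied by \er{sd:tz:samp}. The upsampling map $u\mapsto u\uparrow\dm$ preserves the $\ell_2$ norm of a matrix-valued sequence (it merely inserts zero blocks at indices outside $\dm\dZ$), while the downsampling $u\mapsto u\downarrow\dm$ is a contraction. Since $a\in\dlrs{0}{r}{q}$ is finitely supported, convolution with $a$ (respectively $a^{\star}$) is a bounded operator on the matrix-valued $\ell_2$ spaces by the standard Young inequality applied entrywise. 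Composing these bounded pieces yields the stated mapping properties $\sd_{a,\dm}:\dlrs{2}{s}{r}\to\dlrs{2}{s}{q}$ and $\tz_{a,\dm}:\dlrs{2}{s}{q}\to\dlrs{2}{s}{r}$.

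For the adjoint relation, I would fix $v\in\dlrs{2}{s}{r}$ and $w\in\dlrs{2}{s}{q}$ and expand from \er{sd} to obtain
$$\la\sd_{a,\dm}v,w\ra_{\dlp{2}}=|\det(\dm)|^{\frac{1}{2}}\sum_{n\in\dZ}\sum_{k\in\dZ}v(k)a(n-\dm k)\ol{w(n)}^{\tp}.$$
The finite support of $a$, combined with Cauchy--Schwarz on each matrix entry of $v(k)a(n-\dm k)\ol{w(n)}^{\tp}$, guarantees absolute convergence of this double sum, so Fubini permits swapping the order of summation. Applying the elementary identity $\ol{XY}^{\tp}=\ol{Y}^{\tp}\,\ol{X}^{\tp}$ and comparing with the definition \er{tz} then identifies the rearranged expression as $\la v,\tz_{a,\dm}w\ra_{\dlp{2}}$.

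The main obstacle is essentially bookkeeping rather than any genuine analytic difficulty: one must track matrix dimensions carefully so that $v(k)a(n-\dm k)$ and $w(l)\ol{a(l-\dm k)}^{\tp}$ multiply compatibly to produce outputs of the correct $s\times q$ and $s\times r$ shapes, and one must handle the conjugate transpose properly when unwinding the definition of $\tz_{a,\dm}$. Apart from this matrix algebra, the argument is the scalar proof of \cite[Lemma 2.3 and Lemma 4.3]{han13} with conjugation replaced by conjugate transpose throughout.
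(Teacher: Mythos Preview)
Your proposal is correct and matches the paper's treatment: the paper does not give an independent proof but simply states that the lemma is the matrix-valued version of \cite[Lemma~2.3 and Lemma~4.3]{han13}, which is exactly the route you describe. Your boundedness argument via \er{sd:tz:samp} and the direct Fubini computation for the adjoint identity are precisely the scalar arguments from that reference with conjugation replaced by conjugate transpose, so there is nothing to add.
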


\begin{lemma}\label{comp}Let $u_1\in\dlrs{0}{q}{t},u_2\in\dlrs{0}{r}{q}$ be finitely supported matrix-valued filters and let $\dm_1,\dm_2$ be $d\times d$ dilation matrices. Then
	\be\label{sd:tz:2}\sd_{u_1,\dm_1}\sd_{u_2,\dm_2}v=\sd_{(u_2\uparrow \dm_1)*u_1,\dm_1\dm_2}v,\qquad \tz_{u_2,\dm_2}\tz_{u_1,\dm_1}w=\tz_{(u_2\uparrow \dm_1)*u_1,\dm_1\dm_2}w,\ee
	for all $v\in(\dsq)^{s\times r}$ and $w\in(\dsq)^{s \times t}$.
\end{lemma}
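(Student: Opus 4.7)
The plan is to reduce both identities to the compact expressions in \er{sd:tz:samp}, namely $\sd_{a,\dm}u = |\det(\dm)|^{1/2}(u\uparrow \dm)*a$ and $\tz_{b,\dm}u = |\det(\dm)|^{1/2}(u*b^\star)\downarrow \dm$, together with a short list of routine identities for sampling, convolution, and the $\star$ operation. Specifically, I will first record: $(f\uparrow\dm_2)\uparrow\dm_1 = f\uparrow(\dm_1\dm_2)$ and $(f\downarrow\dm_1)\downarrow\dm_2 = f\downarrow(\dm_1\dm_2)$; the distributive law $(f*g)\uparrow\dm = (f\uparrow\dm)*(g\uparrow\dm)$; the pull-out rule $(f\downarrow\dm)*g = \bigl(f*(g\uparrow\dm)\bigr)\downarrow\dm$; and the star rules $(f*g)^\star = g^\star * f^\star$ and $(u\uparrow\dm)^\star = u^\star \uparrow\dm$. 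Each is verified by a one-line direct calculation on sequences.

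For the subdivision identity, two applications of \er{sd:tz:samp} yield
\[
\sd_{u_1,\dm_1}\sd_{u_2,\dm_2}v \;=\; |\det(\dm_1\dm_2)|^{1/2}\Big(\big((v\uparrow\dm_2)*u_2\big)\uparrow\dm_1\Big)*u_1.
\]
Applying the up-sampling distributive law inside the outer $\uparrow\dm_1$, then merging the two iterated up-samplings of $v$ into a single $\uparrow(\dm_1\dm_2)$, and finally using the associativity of convolution collapses this to $|\det(\dm_1\dm_2)|^{1/2}\bigl(v\uparrow(\dm_1\dm_2)\bigr) * \bigl((u_2\uparrow\dm_1)*u_1\bigr)$, which is precisely $\sd_{(u_2\uparrow\dm_1)*u_1,\dm_1\dm_2}v$.

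For the transition identity, two applications of \er{sd:tz:samp} yield
\[
\tz_{u_2,\dm_2}\tz_{u_1,\dm_1}w \;=\; |\det(\dm_1\dm_2)|^{1/2}\Big(\big((w*u_1^\star)\downarrow\dm_1\big)*u_2^\star\Big)\downarrow\dm_2.
\]
The pull-out rule moves the inner $\downarrow\dm_1$ to the outside while replacing $u_2^\star$ by $u_2^\star\uparrow\dm_1$; the down-sampling composition rule then collapses the two down-samplings into $\downarrow(\dm_1\dm_2)$; and the star rules finally rewrite $u_1^\star*(u_2^\star\uparrow\dm_1) = \bigl((u_2\uparrow\dm_1)*u_1\bigr)^\star$, producing $\tz_{(u_2\uparrow\dm_1)*u_1,\dm_1\dm_2}w$.

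The main nuisance is bookkeeping: since the filters are matrix-valued and convolution is non-commutative, one must apply the star rule with the correct reversal of factors and verify that the matrix dimensions $u_2\in\dlrs{0}{r}{q}$ and $u_1\in\dlrs{0}{q}{t}$ remain compatible at every step of the chain. A tempting shortcut is to derive the transition identity from the subdivision identity through the adjoint relation $\sd_{a,\dm}^\star = \tz_{a,\dm}$ of Lemma \ref{sdtr}; however, that relation is available only on $\ell_2$-sequences, while the lemma as stated must hold on all of $(\dsq)^{s\times r}$, so the direct algebraic route above is preferable.
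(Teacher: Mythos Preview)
Your argument is correct. The paper itself does not supply a proof of this lemma; it only records it as the matrix-valued analogue of \cite[Lemma~4.3]{han13} and moves on. Your direct verification via the sampling/convolution representation \er{sd:tz:samp} is one of the two standard routes (the other being the Fourier-side computation using \er{sd:ft}--\er{tz:ft}, which avoids the $\star$-bookkeeping at the cost of having to track the coset sums in \er{tz:ft}). Your remark about not invoking Lemma~\ref{sdtr} for the transition identity is well taken: the adjoint relation is only stated on $\dlp{2}$, so the pointwise algebraic argument you give is the appropriate way to obtain the conclusion for arbitrary $w\in(\dsq)^{s\times t}$.
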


\subsection{Multi-level Discrete Framelet Transform and the Perfect Reconstruction Property}
Suppose we have finitely supported matrix-valued filters $b_0,\tilde{b}_0\in\dlrs{0}{r}{r}, b_1,\dots,b_s,\tilde{b}_1,\dots,\tilde{b}_s\in\dlrs{0}{1}{r}$, and $d\times d$ dilation matrices $\dm_0,\dm_1,\dots,\dm_s$. We now state the discrete framelet transform employing the filter bank $(\{b_l!\dm_l\}_{l=0}^s,\{\tilde{b}_l!\dm_l\}_{l=0}^s)$ with mixed dilation factors. For $J\in\N$, the $J$-level discrete framelet transform can be stated as the following:

\begin{enumerate}
	\item[Step 1.] For any input signal $v_{0,0}\in(\dsq)^{1\times r}$, perform the $J$-level discrete framelet decomposition procedure:  \be\label{dft:decomp}  v_{l,j}=\tz_{b_l,\dm_l}v_{0,j-1},\qquad j=1,\dots,J,\quad l=0,\dots,s.\ee

	\item[Step 2.] Set $\tilde{v}_{l,J}:=v_{l,J}$ for all $l=0,\dots,s$. Recursively compute $\tilde{v}_{j-1}, j=J,\dots,1$ via
	\be\label{dft:recons}\tilde{v}_{j-1}:=\sum_{l=0}^s\sd_{\tilde{b}_l,\dm_l}\tilde{v}_{l,j},\qquad j=J,\dots,1.\ee
	
\end{enumerate}

We say that a $J$-level discrete framelet transform has \emph{the perfect reconstruction (PR) property} if the original input data $v_{0}$ is equal to the output data $\tilde{v}_{0}$. 
To study the $J$-level discrete framelet transform employing the filter bank 
$(\{b_l!\dm_l\}_{l=0}^s,\{\tilde{b}_l!\dm_l\}_{l=0}^s)$,
we need to define the following linear operators:
\begin{enumerate}
	\item[(1)] \emph{The $J$-level discrete framelet analysis operator} employing the filter bank  $\{b_l!\dm_l\}_{l=0}^s$:
	\be\label{dist:anal}\cW_J:(\dsq)^{1\times r}\to (\dsq)^{1\times (sJ+r)},\quad \cW_Jv:=(v_{1,1},\dots,v_{s,1},\dots,v_{1,J},\dots,v_{s,J},v_{0,J}),\qquad\forall v\in\dsq, \ee
	where $v_{l,j},l=0,\dots,s, j=1,\dots,J$ are defined as \er{dft:decomp}. Denote $\cW:=\cW_1$.

	\item[(2)] 	 \emph{The $J$-level discrete framelet synthesis operator} employing the filter bank $\{\tilde{b_1}!\dm_l\}_{l=0}^s$:
	\be\label{dist:synt}\tilde{\cV}_J:(\dsq)^{1\times(sJ+r)}\to(\dsq)^{1\times r},\quad \tilde{\cV_J}(\tilde{v}_{1,1},\dots,\tilde{v}_{s,1},\dots,\tilde{v}_{1,J},\dots,\tilde{v}_{s,J},\tilde{v}_{0,J}):=\tilde{v}_0, \ee
	for all $\tilde{v}_{1,1},\dots,\tilde{v}_{s,1},\dots,\tilde{v}_{1,J},\dots,\tilde{v}_{s,J}\in \dsq,\tilde{v}_{0,J}\in(\dsq)^{1\times r}$, where $\tilde{v}_{0}$ is calculated via \er{dft:recons}. Denote $\tilde{\cV}:=\tilde{\cV}_1$.
\end{enumerate} 
It follows immediately that the $J$-level discrete framelet transform has the PR property if and only if 
\be\label{J:pr}\tilde{\cV}_J\cW_J=\ID_{(\dsq)^{1\times r}}.\ee
Moreover, by noting that
\be\label{cW:dft}\cW_J=(\ID_{(\dsq)^{1\times [s(J-1)]}}\otimes\cW )\dots( \ID_{(\dsq)^{1\times s}}\otimes\cW)\cW,\ee
and
\be\label{cV:dft}\tilde{\cV}_J=\tilde{\cV}(\ID_{(\dsq)^{1\times s}}\otimes\tilde{\cV} )\dots(\ID_{(\dsq)^{1\times [s(J-1)]}}\otimes \tilde{\cV}),\ee
it is obvious that \er{J:pr} is equivalent to
\be\label{1:pr}\tilde{\cV}\cW=\ID_{(\dsq)^{1\times r}}.\ee

Here, we characterize the PR property of a multi-level discrete framelet transform with mixed dilation factors.

\begin{theorem}\label{pr:mix}Let $b_0,\tilde{b}_0\in\dlrs{0}{r}{r}, b_1,\dots,b_s,\tilde{b}_1,\dots,\tilde{b}_s\in\dlrs{0}{1}{r}$ be finitely supported matrix-valued filters and $\dm,\dots,\dm_s$ be $d\times d$ dilation matrices. The following statements are equivalent:
	
	\begin{enumerate}
		\item[(i)]For every $J\in\N$, the $J$-level discrete framelet transform employing the filter bank\\* $(\{b_l!\dm_l\}_{l=0}^s,\{\tilde{b}_l!\dm_l\}_{l=0}^s)$ has the PR property.
		
		\item [(ii)] $\tilde{\cV}\cW v=v$ for all $v\in\dlrs{0}{1}{r}$, where $\cW$ and $\tilde{\cV}$ are the $1$-level discrete analysis and synthesis operators respectively.
		
		\item [(iii)]$(\{b_l!\dm_l\}_{l=0}^s,\{\tilde{b}_l!\dm_l\}_{l=0}^s)$ is a dual framelet filter bank with mixed dilation factors, i.e.,
		\be\label{pr:frt:mix}\sum_{l=0}^s\chi_{\Omega_l}(\omega)\ol{\wh{b_l}(\xi)}^{\tp}\wh{\tilde{b}_l
		}(\xi+2\pi\omega)=\td(\omega)I_r,\ee
		for almost every $\xi\in\dR$ and every $\omega\in\bigcup_{l=0}^s\Omega_l$, where 
		\be\label{omega:l} \Omega_l:=[\dm_l^{-\tp}\dZ]\cap[0,1)^d,\qquad l=0,\dots,s,\ee 
		and $\chi_E$ is the indicator function of the subset $E\subseteq\dR$.
	\end{enumerate}
	
\end{theorem}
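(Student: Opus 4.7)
My plan is to establish $(i) \Leftrightarrow (ii) \Leftrightarrow (iii)$ by first reducing the $J$-level PR property to a single one-level identity, then converting that identity into its Fourier-domain counterpart, and finally matching the two natural forms of the resulting matrix identity.

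The equivalence $(i) \Leftrightarrow (ii)$ is almost free from the preceding discussion. Using \eqref{J:pr} combined with the factorizations \eqref{cW:dft} and \eqref{cV:dft}, the $J$-level PR property for every $J$ reduces to the one-level identity $\tilde{\cV}\cW = \ID_{(\dsq)^{1\times r}}$; thus $(i)$ asserts $\tilde{\cV}\cW v = v$ for all $v \in (\dsq)^{1\times r}$, while $(ii)$ asserts the same only for $v \in \dlrs{0}{1}{r}$. For the nontrivial direction I would invoke locality: since all filters are finitely supported, \eqref{sd:tz:samp} shows that for any $n \in \dZ$, $(\tilde{\cV}\cW v)(n)$ depends on only finitely many entries of $v$ in a bounded window around $n$. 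Truncating $v \in (\dsq)^{1\times r}$ to a finitely supported $v'$ coinciding with $v$ on that window yields $(\tilde{\cV}\cW v)(n) = (\tilde{\cV}\cW v')(n) = v'(n) = v(n)$ by $(ii)$.

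For $(ii) \Leftrightarrow (iii)$, I would compute $\wh{\tilde{\cV}\cW v}$ via the Fourier formulas \eqref{sd:ft} and \eqref{tz:ft}. For each $l$,
\[
\wh{\sd_{\tilde{b}_l,\dm_l}\tz_{b_l,\dm_l} v}(\xi) = \sum_{\omega \in \Omega_l} \wh{v}(\xi + 2\pi\omega)\,\ol{\wh{b_l}(\xi + 2\pi\omega)}^{\tp}\wh{\tilde{b}_l}(\xi),
\]
so summing over $l$ and reorganizing by $\omega$ in $\Omega := \bigcup_{l=0}^s \Omega_l$ via indicator functions gives
\[
\wh{\tilde{\cV}\cW v}(\xi) = \sum_{\omega \in \Omega} \wh{v}(\xi + 2\pi\omega)\,\Big[\sum_{l=0}^s \chi_{\Omega_l}(\omega)\,\ol{\wh{b_l}(\xi + 2\pi\omega)}^{\tp}\wh{\tilde{b}_l}(\xi)\Big].
\]
Requiring this to equal $\wh{v}(\xi)$ for every $v \in \dlrs{0}{1}{r}$ — and hence for every $\C^{1\times r}$-valued trigonometric polynomial $\wh{v}$ — forces the bracketed matrix to equal $\td(\omega)I_r$ for every $\omega \in \Omega$ and a.e.\ $\xi$. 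Individual $\omega$-terms can be separated by testing against modulated delta sequences $v = \delta_k e_j$ and exploiting that the characters $\{e^{-2\pi i k\omega}\}_{k,\omega}$ on the finite set $\Omega$ span the space of $\Omega$-indexed vectors, provided $k$ ranges over a sufficiently large (but finite) set, which is possible since $N\Omega \subset \dZ$ for some $N \in \N$.

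The only real subtlety — and the step I expect to be the main obstacle — is matching the identity just derived with the symmetric form \eqref{pr:frt:mix} in $(iii)$. Substituting $\xi \to \xi - 2\pi\omega$ converts the identity into $\sum_l \chi_{\Omega_l}(\omega)\ol{\wh{b_l}(\xi)}^{\tp}\wh{\tilde{b}_l}(\xi - 2\pi\omega) = \td(\omega)I_r$. Because each $\Omega_l$ is a complete set of coset representatives for $\dm_l^{-\tp}\dZ/\dZ$ in $[0,1)^d$, the map $\omega \mapsto \omega'$ sending $\omega$ to the unique representative of $-\omega \bmod \dZ$ is a bijection on each $\Omega_l$ and on $\Omega$, preserving both $\chi_{\Omega_l}$ and $\td$. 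Using $\dZ$-periodicity of $\wh{\tilde{b}_l}$ to rewrite $\wh{\tilde{b}_l}(\xi - 2\pi\omega) = \wh{\tilde{b}_l}(\xi + 2\pi\omega')$ and relabeling $\omega' \leftrightarrow \omega$ then produces \eqref{pr:frt:mix}. This coset-bookkeeping is straightforward once set up, but it is the one place where the mixed-dilation setting (several different $\Omega_l$) differs meaningfully from the uniform case.
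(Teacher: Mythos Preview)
Your proposal is correct. The reduction $(i)\Leftrightarrow(ii)$ via locality and the Fourier computation of $\wh{\tilde{\cV}\cW v}$ match the paper exactly. The genuine difference is in how the individual $\omega$-terms are separated. The paper chooses $v$ so that $\wh{v}$ is supported in a tiny neighbourhood $E_{x_0,\eps}$ with $\eps$ small enough that the shifts $\xi\mapsto\xi+2\pi\omega$ ($\omega\in\Omega\setminus\{0\}$) move $\wh{v}$ off its own support; this kills all terms with $\omega\neq 0$ pointwise and then a translation argument handles the remaining $\omega$. Your route instead tests against $v=\td_k\,(e_j^r)^{\tp}$ and invokes linear independence of the characters $k\mapsto e^{-2\pi ik\cdot\omega}$ for distinct $\omega\in\Omega$; this is a clean Vandermonde/Artin argument that stays entirely within finitely supported sequences and avoids any $\eps$-bookkeeping. (The remark that $N\Omega\subset\dZ$ is not actually needed for the spanning claim --- linear independence of distinct characters on $\dZ$ already forces $\sum_\omega c_\omega e^{-2\pi ik\cdot\omega}=0\ \forall k\Rightarrow c_\omega=0$ --- but it does no harm.) Your approach is arguably more elementary; the paper's is more geometric and reusable in the later $\dLp{2}$ arguments where similar support-shrinking appears.

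You are also more careful than the paper on one point: the identity one derives directly has the form $\sum_l\chi_{\Omega_l}(\omega)\ol{\wh{b_l}(\xi+2\pi\omega)}^{\tp}\wh{\tilde{b}_l}(\xi)=\td(\omega)I_r$, whereas \eqref{pr:frt:mix} has the roles of $\xi$ and $\xi+2\pi\omega$ swapped. The paper silently treats these as the same; your explicit use of the involution $\omega\mapsto -\omega\bmod\dZ$ on each $\Omega_l$ (well-defined since $\dZ\subset\dm_l^{-\tp}\dZ$ and the representative in $[0,1)^d$ is unique) together with $2\pi\dZ$-periodicity of $\wh{\tilde{b}_l}$ is the correct justification.
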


\bp (i) $\Leftrightarrow$ (ii): (i) $\Rightarrow$ (ii) is trivial. To prove (ii) $\Rightarrow$ (i), one uses the locality of the subdivision and transition operators (see the proofs of \cite[Theorem 1.1.1]{hanbook} and \cite[Theorem 2.1]{han13}).\\

(i) $\Rightarrow$ (iii): 
By definition, we have
\be\label{vw:1}\tilde{\cV}\cW v=\sum_{l=0}^s\sd_{\tilde{b}_l,\dm_l}\tz_{b_l,\dm_l}v,\qquad v\in(\dsq)^{1\times r}.\ee
By item (i) and taking Fourier series on both sides of \er{vw:1}, we have
\be\label{pr:fourier}\wh{v}(\xi)=\sum_{l=0}^s\sum_{\omega_l\in\Omega_l}\wh{v}(\xi+2\pi\omega_l)\ol{\wh{b_l}(\xi+2\pi\omega_l)}^{\tp}\wh{\tilde{b}_l}(\xi),\qquad \xi\in\dR.\ee
For simplicity of presentation, define $\Omega:=\bigcup_{l=0}^s\Omega_l$. For every $\omega \in\Omega$, define
\be\label{u:omega}\wh{u_\omega}(\xi):=\left(\sum_{l=0}^s\chi_{\Omega_l}(\omega)\ol{\wh{b_l}(\xi+2\pi\omega)}^{\tp}\wh{\tilde{b}_l}(\xi)\right)-\td(\omega),\qquad \xi\in\dR.\ee
It follows that \er{pr:fourier} is equivalent to
\be\label{pr:f2}\wh{v}(\xi)=\sum_{\omega\in\Omega}\wh{v}(\xi+2\pi\omega)\wh{u_\omega}(\xi),\qquad\xi\in\dR.\ee
Define
\be\label{eps:omega}\eps_1:=\inf_{x\in\Omega\setminus\{0\}}\|x\|_\infty,\quad \eps_2:=\inf_{x\in\Omega\setminus\{0\}, y\in\{0,1\}^d}\|x-y\|_\infty.\ee
Let $x_0\in(-\pi,\pi)^d$ be fixed. For every $\eps\in(0,\min(\eps_1,\eps_2))$, define $E_{x_0,\eps}:=x_0+[-\frac{\eps}{\pi},\frac{\eps}{\pi}]^d$. Let $v\in(\dsq)^{1\times r}$ be such that
\be\label{supp:v}(\supp(\wh{v})\cap[-\pi,\pi)^d)\subseteq E_{x_0,\eps}\subseteq (-\pi,\pi)^d,\ee
where $\supp(\wh{v})$ is the support of the function $\wh{v}$. Note that for any $\xi\in E_{x_0,\eps}$ and $\omega\in\Omega\setminus\{0\}\subseteq[0,1)^d$, the point $\xi+2\pi\omega$ lies in $[-\pi,\pi)^d+2\pi y$ for some (unique) $y\in\{0,1\}^d$. Moreover, for any $\xi\in E_{x_0,\eps}$, $\omega\in\Omega\setminus\{0\}$ and  $y\in\{0,1\}^d$, we have
\be\label{eps:delta}\|\xi+2\pi\omega-x_0\|_\infty\ge 2\pi\|\omega\|_\infty-\|\xi-x_0\|_\infty>2\pi\eps-\frac{\eps}{\pi}>\frac{\eps}{\pi},\ee
and 
\be\label{eps:delta:y}\|\xi+2\pi\omega-(x_0+2\pi y)\|_\infty\ge 2\pi\|\omega-y\|_\infty-\|\xi-x_0\|_\infty>2\pi\eps-\frac{\eps}{\pi}>\frac{\eps}{\pi}.\ee
It follows that 
\be\label{whv:omega}\wh{v}(\xi+2\pi\omega)=0,\qquad\forall \xi\in E_{x_0,\eps},\quad\omega\in\Omega\setminus\{0\}.\ee
Thus
\be\label{whv:0}\wh{v}(\xi)\wh{u_0}(\xi)=0,\qquad\forall \xi\in E_{x_0,\eps}.\ee
Since $x_0\in(-\pi,\pi)^d$ is arbitrary and $E_{x_0,\eps}$ is an open neighbourhood of $x_0$, we conclude that $\wh{u_0}(\xi)=0$ for all $\xi\in(-\pi,\pi)^d$. Since $\wh{u_0}$ is $2\pi\mathbb{Z}^d$-periodic, it follows that $\wh{u_0}=0$ for a.e. $\xi\in\mathbb{R}^d$. For $\mathring{\omega}\in\Omega\setminus\{0\}$, note that \eqref{pr:f2} is equivalent to
\begin{equation}\label{vwv3}\sum_{\omega\in \Omega}\wh{v}(\xi+2\pi(\omega-\mathring{\omega}))\wh{u_\omega}(\xi-2\pi\mathring{\omega})=0,\quad\forall\xi\in\mathbb{R}^d.
\end{equation}
By applying the same argument as above, we have $\wh{u_{\mathring{\omega}}}=0$ for a.e. $\xi\in\dR$. This proves item (iii).\\

(iii) $\Rightarrow$ (i): This follows immediately from item (iii) and the fact that \er{pr:f2} holds for all $v\in\dlrs{0}{1}{r}$.

\ep

\begin{rem}\label{dft:msamp}It is not hard to observe that $(\{b_l!\dm_l\}_{l=0}^s,\{\tilde{b}_l!\dm_l\}_{l=0}^s)$ is a dual framelet filter bank if and only if $(\{\tilde{b}_l!\dm_l\}_{l=0}^s,\{b_l!\dm_l\}_{l=0}^s)$ is a dual framelet filter bank.
\end{rem}

\subsection{The Stability of Framelet Filter Banks}
We now discuss the stability of a discrete framelet transform with mixed dilation factors.	

\begin{definition}A filter bank $\{b_l!\dm_l\}_{l=0}^s$ \textbf{has stability} in $\dlp{2}$ if there exist $C_1,C_2>0$ such that
\be\label{l2:sta}C_1\|v\|_{\dlrs{2}{1}{r}}^2\le \|\cW_Jv\|_{\dlrs{2}{1}{[(sJ+r)]}}^2\le C_2\|v\|_{\dlrs{2}{1}{r}}^2\ee
	for all $J\in\N$ and $v\in\dlrs{2}{1}{r}$, where $\cW_J$ is the $J$-level discrete analysis operator employing the filter bank $\{b_l!\dm_l\}_{l=0}^s$. In this case, $\{b_l!\dm_l\}_{l=0}^s$ is called \textbf{a framelet filter bank} with mixed dilation factors.
\end{definition}

\begin{theorem}\label{stfr}Let $(\{b_l!\dm_l\}_{l=0}^s,\{\tilde{b}_l!\dm_l\}_{l=0}^s)$ be a dual framelet filter bank with mixed dilation factors, and let $0<C_1\leq C_2<\infty$. For each $J\in\N$, let $\cW_J$ and $\cV_J$ (resp. $\tilde{\cW}_J$ and $\tilde{\cV}_J$) be the $J$-level discrete framelet analysis and synthesis operators employing the filter bank $\{b_l!\dm_l\}_{l=0}^s$ (resp. $\{\tilde{b}_l!\dm_l\}_{l=0}^s$). The following statements are equivalent.
	\begin{enumerate}
		\item $\{b_l!\dm_l\}_{l=0}^s$ and $\{\tilde{b}_l!\dm_l\}_{l=0}^s$ have stability in $\dlp{2}$ with \er{l2:sta} and
	\be\label{l2:sta:t}C_2^{-1}\|v\|_{\dlrs{2}{1}{r}}^2\le\|\tilde{W}_Jv\|_{\dlrs{2}{1}{[(sJ+r)]}}^2\le C_1^{-1}\|v\|_{\dlrs{2}{1}{r}}^2\ee
		hold for all $J\in\N$ and $v\in\dlrs{2}{1}{r}$.
		
		\item $\|\cW_J\|^2\le C_2$ and $\|\tilde{\cW}_J\|^2\le C_1^{-1}$ for all $J\in\N$.
		
		\item $\|\cV_J\|^2\le C_2$ and $\|\tilde{\cV}\|^2\le C_1^{-1}$ for all $J\in\N$.
		
		\item $\|\cV_J\|^2\le C_2$ and $\|\tilde{\cW}_J\|^2\le C_1^{-1}$ for all $J\in\N$.
		
		\item $\|\cW_J\|^2\le C_2$ and $\|\tilde{\cV}_J\|^2\le C_1^{-1}$ for all $J\in\N$.
		
	\end{enumerate}
\end{theorem}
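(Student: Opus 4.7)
The plan is to treat all five conditions as facets of a single picture built on two ingredients: the adjoint relation between transition and subdivision operators from Lemma~\ref{sdtr}, and the perfect reconstruction identity \er{J:pr} both for the given pair $(\{b_l!\dm_l\}_{l=0}^s,\{\tilde{b}_l!\dm_l\}_{l=0}^s)$ and for the swapped pair $(\{\tilde{b}_l!\dm_l\}_{l=0}^s,\{b_l!\dm_l\}_{l=0}^s)$, which is also a dual framelet filter bank by Remark~\ref{dft:msamp}.

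The first step is to show that $\cW_J^{\star}=\cV_J$ and $\tilde{\cW}_J^{\star}=\tilde{\cV}_J$ as bounded operators between the appropriate $\dlp{2}$ spaces. At the one-level stage this is immediate from Lemma~\ref{sdtr}: for $v\in\dlrs{2}{1}{r}$ and $w=(w_1,\dots,w_s,w_0)\in\dlrs{2}{1}{s+r}$ one computes
\be
\la\cW v,w\ra_{\dlp{2}}=\sum_{l=0}^s\la\tz_{b_l,\dm_l}v,w_l\ra_{\dlp{2}}=\sum_{l=0}^s\la v,\sd_{b_l,\dm_l}w_l\ra_{\dlp{2}}=\la v,\cV w\ra_{\dlp{2}}.
\ee
The multilevel identity then follows from the recursive decompositions \er{cW:dft} and \er{cV:dft} by taking adjoints of the tensor-product factors $\ID\otimes\cW$ (noting that the order of composition reverses). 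Since the adjoint operation preserves operator norms, this yields $\|\cW_J\|=\|\cV_J\|$ and $\|\tilde{\cW}_J\|=\|\tilde{\cV}_J\|$ for every $J\in\N$, which establishes the mutual equivalence of items (2), (3), (4), and (5) in one stroke.

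The second step is the equivalence (1)$\Leftrightarrow$(2). The upper bounds in \er{l2:sta} and \er{l2:sta:t} are direct restatements of $\|\cW_J\|^2\le C_2$ and $\|\tilde{\cW}_J\|^2\le C_1^{-1}$, so (1)$\Rightarrow$(2) is immediate. For the converse, the content is the lower bounds. Applying the PR identity $\tilde{\cV}_J\cW_J=\ID_{(\dsq)^{1\times r}}$ together with $\|\tilde{\cV}_J\|=\|\tilde{\cW}_J\|\le C_1^{-1/2}$ from the first step gives
\be
\|v\|^2_{\dlrs{2}{1}{r}}=\|\tilde{\cV}_J\cW_Jv\|^2_{\dlrs{2}{1}{r}}\le\|\tilde{\cV}_J\|^2\|\cW_Jv\|^2\le C_1^{-1}\|\cW_Jv\|^2,
\ee
yielding $\|\cW_Jv\|^2\ge C_1\|v\|^2$. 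A symmetric argument using the swapped PR identity $\cV_J\tilde{\cW}_J=\ID_{(\dsq)^{1\times r}}$, valid by Remark~\ref{dft:msamp}, produces $\|\tilde{\cW}_Jv\|^2\ge C_2^{-1}\|v\|^2$, completing \er{l2:sta:t}.

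The main bookkeeping obstacle is verifying $\cW_J^{\star}=\cV_J$ across all $J$ levels: the tensor-product decompositions \er{cW:dft} and \er{cV:dft} act nontrivially only on the low-pass coordinate while leaving the high-pass coordinates untouched, so one must check that the factors reverse correctly under adjunction and that the roles of $\sd_{b_l,\dm_l}$ and $\tz_{b_l,\dm_l}$ are properly paired at every level. Once this is pinned down, the remainder of the argument is a one-line application of the operator-norm inequality through the PR identity.
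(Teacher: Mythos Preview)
Your proposal is correct and follows essentially the same approach as the paper's proof: both hinge on the adjoint identity $\cW_J^\star=\cV_J$ (from Lemma~\ref{sdtr}) to get $\|\cW_J\|=\|\cV_J\|$ and $\|\tilde{\cW}_J\|=\|\tilde{\cV}_J\|$, which immediately gives the equivalence of (2)--(5), and then derive the lower bounds in (1) from the upper bounds in (2) via the two PR identities $\tilde{\cV}_J\cW_J=\ID$ and $\cV_J\tilde{\cW}_J=\ID$ (the latter by Remark~\ref{dft:msamp}). You are slightly more explicit than the paper in justifying the multilevel adjoint relation through the recursive factorizations \er{cW:dft} and \er{cV:dft}, but this is a presentational rather than a substantive difference.
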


\bp By Lemma~\ref{sdtr}, we have
\be\label{V:W:adj}\la \cV_Jw,v\ra_{\dlp{2}}=\la w,\cW_Jv\ra_{\dlp{2}},\quad \la \tilde{\cV}_Jw,v\ra_{\dlp{2}}=\la w,\tilde{\cW}_Jv\ra_{\dlp{2}}\ee
for all $J\in\N$, $w\in\dlrs{2}{1}{[(sJ+r)]}$ and $v\in\dlrs{2}{1}{r}$. Thus we have (2)$\Leftrightarrow$(3)$\Leftrightarrow$(4)$\Leftrightarrow$(5). We finish the proof by proving (1)$\Leftrightarrow$(2). It's trivial to see that (1)$\Rightarrow$(2). To prove the converse, note that by Remark~\ref{dft:msamp}, we have $\cV_J\tilde{\cW}_Jv=\tilde{\cV}_J\cW_Jv=v$ for all $v\in\dlp{2}$ and $J\in\N$. Moreover, by Lemma\autoref{sdtr}, we have $\|\cW_J\|=\|\cV_J\|$ and $\|\tilde{\cW}_J\|=\|\tilde{\cV}_J\|$ for all $J\in\N$. Thus item (2) yields
\be\label{VW:op}\|\cV_J\tilde{\cW}_Jv\|_{\dlrs{2}{1}{r}}^2=\|v\|_{\dlp{2}}^2\le C_2\|\tilde{\cW}_Jv\|_{\dlrs{2}{1}{[(sJ+r)]}}^2,\ee
\be\label{VW:op:1}\|\tilde{\cV}_J\cW_Jv\|_{\dlrs{2}{1}{r}}^2=\|v\|_{\dlrs{2}{1}{r}}^2\le C_1^{-1}\|\cW_Jv\|_{\dlrs{2}{1}{[(sJ+r)]}}^2,\ee
for all $J\in\N$ and $v\in\dlrs{2}{1}{r}$. Thus 
\be\label{VW:op:2}\|\cW_J\|^2=\|\cV_J\|^2\le C_2,\quad \|\tilde{\cW}_J\|^2=\|\tilde{\cV}_J\|^2\le C_1^{-1}.\ee
This proves (2)$\Rightarrow$(1), and the proof is now complete.
\ep

\section{Wavelet Filter Banks with Mixed Dilation Factors}

In this section, we introduce the notion of a wavelet filter bank with mixed dilation factors. We generalize the definition of an $\dm$-adic biorthogonal wavelet filter bank, which uses a uniform dilation factor $\dm$. To do this, we need the following proposition on the analysis and synthesis operators.

\label{sec:wv:mix}
\begin{prop}\label{wvdef}Let $b_0,\tilde{b}_0\in\dlrs{0}{r}{r}, b_1,\dots,b_s,\tilde{b}_1,\dots,\tilde{b}_s\in\dlrs{0}{1}{r}$ be finitely supported filters and $\dm_0,\dots,\dm_s$ be $d\times d$ dilation matrices such that $(\{b_l!\dm_l\}_{l=0}^s,\{\tilde{b}_l!\dm_l\}_{l=0}^s)$ is a dual framelet filter bank with mixed sampling factors. Let $\cW$ and $\tilde{\cV}$ be the discrete framelet analysis and synthesis operators employing the filter banks $\{b_l!\dm_l\}_{l=0}^s$ and $\{\tilde{b}_1!\dm_l\}_{l=0}^s$. The following statements are equivalent.
	
	\begin{enumerate}
		\item[(i)] $\cW$ is surjective.
		
		\item[(ii)] $\tilde{\cV}$ is injective.
		
		\item[(iii)] $\tilde{\cV}\cW=\ID_{(\dsq)^{1\times r}}$ and $\cW\tilde{\cV}=\ID_{(\dsq)^{1\times (s+r)}}$.
	\end{enumerate}
\end{prop}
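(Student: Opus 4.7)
The plan is to use Theorem~\ref{pr:mix} as the workhorse. Since $(\{b_l!\dm_l\}_{l=0}^s,\{\tilde{b}_l!\dm_l\}_{l=0}^s)$ is assumed to be a dual framelet filter bank with mixed dilation factors, Theorem~\ref{pr:mix} delivers the identity $\tilde{\cV}\cW=\ID_{(\dsq)^{1\times r}}$ for free. So one factor of (iii) is already in hand, and the proposition reduces to verifying the other factor $\cW\tilde{\cV}=\ID_{(\dsq)^{1\times (s+r)}}$ from either (i) or (ii); conversely, (iii)$\Rightarrow$(i) and (iii)$\Rightarrow$(ii) are formalities. Throughout, the operators are interpreted with $J=1$, so that $\cW\colon (\dsq)^{1\times r}\to (\dsq)^{1\times (s+r)}$ and $\tilde{\cV}\colon (\dsq)^{1\times (s+r)}\to (\dsq)^{1\times r}$, and the composition directions make sense.

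For the trivial direction (iii)$\Rightarrow$(i), (ii), I would note that $\cW\tilde{\cV}=\ID$ exhibits $\tilde{\cV}$ as a right inverse of $\cW$: every $w\in(\dsq)^{1\times (s+r)}$ equals $\cW(\tilde{\cV}w)$, so $\cW$ is surjective, and $\tilde{\cV}w_1=\tilde{\cV}w_2$ implies $w_1=\cW\tilde{\cV}w_1=\cW\tilde{\cV}w_2=w_2$, so $\tilde{\cV}$ is injective.

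For (i)$\Rightarrow$(iii), I would fix $w\in(\dsq)^{1\times (s+r)}$, use surjectivity of $\cW$ to pick some $v\in (\dsq)^{1\times r}$ with $\cW v=w$, then apply Theorem~\ref{pr:mix} to get $\tilde{\cV}w=\tilde{\cV}\cW v=v$, and finally $\cW\tilde{\cV}w=\cW v=w$. Since $w$ was arbitrary this gives $\cW\tilde{\cV}=\ID_{(\dsq)^{1\times (s+r)}}$. For (ii)$\Rightarrow$(iii), I would instead compute, for arbitrary $w$,
\be
\tilde{\cV}\bigl(\cW\tilde{\cV}w-w\bigr)=(\tilde{\cV}\cW)(\tilde{\cV}w)-\tilde{\cV}w=\tilde{\cV}w-\tilde{\cV}w=0,
\ee
using $\tilde{\cV}\cW=\ID$, and then invoke injectivity of $\tilde{\cV}$ to conclude $\cW\tilde{\cV}w=w$.

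There is no genuine analytic obstacle here; the proposition is a purely formal consequence of the one-sided identity $\tilde{\cV}\cW=\ID$ furnished by Theorem~\ref{pr:mix}, together with the standard fact that a one-sided inverse becomes two-sided once the ambient operator is known to be surjective (or its partner injective). The only thing to be careful about is bookkeeping the spaces $(\dsq)^{1\times r}$ and $(\dsq)^{1\times (s+r)}$ correctly when writing the two identity maps, since $\cW$ and $\tilde{\cV}$ act between different ambient sequence spaces.
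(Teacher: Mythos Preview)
Your proof is correct and follows essentially the same approach as the paper: both use Theorem~\ref{pr:mix} to secure $\tilde{\cV}\cW=\ID_{(\dsq)^{1\times r}}$, then derive $\cW\tilde{\cV}=\ID_{(\dsq)^{1\times (s+r)}}$ from surjectivity of $\cW$ or injectivity of $\tilde{\cV}$ by the same elementary one-sided-inverse manipulations. The only cosmetic difference is that for (i)$\Rightarrow$(iii) the paper writes $\cW v=\cW\tilde{\cV}(\cW v)$ and invokes surjectivity, whereas you pick a preimage explicitly; these are the same argument.
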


\begin{proof}(iii) $\Rightarrow$ (i) and (iii) $\Rightarrow$ (ii) are trivial. 
	
	We now prove that either (i) or (ii) implies (iii). The condition $\tilde{\cV}\cW=\ID_{(\dsq)^{1\times r}}$ follows immediately from Theorem\autoref{pr:mix}. It remains to show that either (i) or (ii) implies that $\cW\tilde{\cV}=\ID_{(\dsq)^{1\times [r(s+1)]}}$. First, assume item (i) holds. For every $v\in(\dsq)^{1\times r}$ we have
	\be\label{wav:fil}\cW v=\cW(\tilde{\cV}\cW)v=\cW\tilde{\cV}(\cW v).\ee
	By the surjectivity of $\cW$, we conclude that $\cW\tilde{\cV}=\ID_{(\dsq)^{1\times [r(s+1)]}}$. Now assume item (ii) holds. For every $w\in (\dsq)^{1\times (s+r)}$, we have
	\be\label{wav:fil:1}\tilde{\cV}(\cW\tilde{\cV})w=(\tilde{\cV}\cW)\tilde{\cV}w=\tilde{\cV}w.\ee
	Since $\tilde{\cV}$ is injective, we see that $(\cW\tilde{\cV})w=w$ for all $w\in  (\dsq)^{1\times (s+r)}$. 
	
\end{proof}

\begin{definition} A dual framelet filter bank $(\{b_l!\dm_l\}_{l=0}^s,\{\tilde{b}_l!\dm_l\}_{l=0}^s)$ is called a biorthogonal wavelet filter bank with mixed dilation factors if any one of the equivalent conditions in Proposition~\ref{wvdef} holds.\end{definition}

The following theorem characterizes the biorthogonal wavelet filter banks with mixed sampling factors. 

\begin{theorem}Let $b_0,\tilde{b}_0\in\dlrs{0}{r}{r}, b_1,\dots,b_s,\tilde{b}_1,\dots,\tilde{b}_s\in\dlrs{0}{1}{r}$ be finitely supported filters and let $\dm,\dots,\dm_s$ be $d\times d$ dilation matrices. The following are equivalent:
	
	\begin{enumerate}
		\item[(i)] $(\{b_l!\dm_l\}_{l=0}^s,\{\tilde{b}_l!\dm_l\}_{l=0}^s)$ is a biothogonal wavelet filter bank with mixed dilation factors.
		
		\item[(ii)] $(\{b_l!\dm_l\}_{l=0}^s,\{\tilde{b}_l!\dm_l\}_{l=0}^s)$ is a dual framelet filter bank with mixed dilation factors, and satisfies
		\be\label{bi:wfb}\sum_{\omega\in\Omega_k\cap\Omega_l}\wh{b_k}(\xi+2\pi \omega)\ol{\wh{\tilde{b}_l}(\xi+2\pi\omega)}^{\tp}=\begin{cases}I_r, &k=l=0\\
			\td(k-l), &k,l\neq 0\\
			0, &k\neq 0, l=0\\
			0, &k=0, l\neq0,\end{cases}\qquad \text{a.e. }\xi\in\dR,\qquad k,l=0,\dots,s,\ee
		where $\Omega_l$ is defined via \er{omega:l}.	
	\end{enumerate}
	
\end{theorem}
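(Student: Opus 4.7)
By the definition of a biorthogonal wavelet filter bank and Proposition~\ref{wvdef}, item~(i) is equivalent to the pair $(\{b_l!\dm_l\}_{l=0}^s,\{\tilde{b}_l!\dm_l\}_{l=0}^s)$ being a dual framelet filter bank together with the operator identity $\cW\tilde{\cV}=\ID_{(\dsq)^{1\times(s+r)}}$. By Theorem~\ref{pr:mix}, the dual framelet filter bank property is exactly \er{pr:frt:mix}, which is already explicitly assumed in (ii). The task therefore reduces to showing that, given \er{pr:frt:mix} (so that $\tilde{\cV}\cW=\ID$ holds automatically), the identity $\cW\tilde{\cV}=\ID_{(\dsq)^{1\times(s+r)}}$ is equivalent to the biorthogonality relations \er{bi:wfb}. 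Writing $w=(w_1,\dots,w_s,w_0)\in(\dsq)^{1\times(s+r)}$, the $k$-th block of $\cW\tilde{\cV}w$ is $\sum_{l=0}^s\tz_{b_k,\dm_k}\sd_{\tilde{b}_l,\dm_l}w_l$, so by linearity and by testing one block $w_l$ at a time, $\cW\tilde{\cV}=\ID$ is equivalent to the family of identities $\tz_{b_k,\dm_k}\sd_{\tilde{b}_l,\dm_l}=\delta_{k,l}\ID$ for all $k,l\in\{0,1,\dots,s\}$.

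Using \er{sd:ft} and \er{tz:ft} in succession, for any admissible $w_l$,
$$\wh{\tz_{b_k,\dm_k}\sd_{\tilde{b}_l,\dm_l}w_l}(\xi)=\frac{|\det(\dm_l)|^{1/2}}{|\det(\dm_k)|^{1/2}}\sum_{\omega\in\Omega_k}\wh{w_l}(\dm_l^{\tp}\dm_k^{-\tp}\xi+2\pi\dm_l^{\tp}\omega)\wh{\tilde{b}_l}(\dm_k^{-\tp}\xi+2\pi\omega)\overline{\wh{b_k}(\dm_k^{-\tp}\xi+2\pi\omega)}^{\tp}.$$
When $k=l$, one has $\dm_k^{\tp}\omega\in\dZ$ for every $\omega\in\Omega_k$, so $2\pi\dZ$-periodicity of $\wh{w_k}$ makes the $\wh{w_k}$-factor constant in $\omega$. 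Setting $\zeta=\dm_k^{-\tp}\xi$ and requiring the equality $T_{k,k}w_k=w_k$ to hold for all $w_k$ forces
$$\sum_{\omega\in\Omega_k}\wh{\tilde{b}_k}(\zeta+2\pi\omega)\overline{\wh{b_k}(\zeta+2\pi\omega)}^{\tp}=I,$$
which, after conjugate transposition, is exactly \er{bi:wfb} for $k=l$ (using $\Omega_k\cap\Omega_k=\Omega_k$).

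When $k\ne l$, the critical observation is that for $\omega\in\Omega_k$ we have $\dm_l^{\tp}\omega\in\dZ$ if and only if $\omega\in\Omega_l$, equivalently $\omega\in\Omega_k\cap\Omega_l$. The map $\omega\mapsto\{\dm_l^{\tp}\omega\}$ thus partitions $\Omega_k$ into cosets of $\Omega_k\cap\Omega_l$, parametrised by a set of representatives $\{\omega_\tau\}$. Grouping the Fourier sum by these cosets and invoking the linear independence of the shifts $\wh{w_l}(\,\cdot\,+2\pi\tau)$ for distinct $\tau\in[0,1)^d$ as $w_l$ varies, the condition $T_{k,l}w_l=0$ for all $w_l$ becomes one condition for each coset:
$$\sum_{\omega'\in\Omega_k\cap\Omega_l}\wh{\tilde{b}_l}(\zeta+2\pi\omega_\tau+2\pi\omega')\overline{\wh{b_k}(\zeta+2\pi\omega_\tau+2\pi\omega')}^{\tp}=0\quad\text{a.e. }\zeta\in\dR.$$
Each such condition, however, is merely the $\tau=0$ condition evaluated at the translated point $\zeta+2\pi\omega_\tau$, thanks to $2\pi\dZ$-periodicity of $\wh{\tilde{b}_l}$ and $\wh{b_k}$. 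Consequently the whole family collapses to the single identity
$$\sum_{\omega\in\Omega_k\cap\Omega_l}\wh{\tilde{b}_l}(\zeta+2\pi\omega)\overline{\wh{b_k}(\zeta+2\pi\omega)}^{\tp}=0,$$
which after conjugate transposition is precisely the $k\ne l$ part of \er{bi:wfb}.

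The main obstacle is the bookkeeping in the $k\ne l$ case: establishing that the matrix/row-vector sizes of $b_0,\tilde{b}_0$ versus $b_l,\tilde{b}_l$ ($l\ge 1$) are handled uniformly, justifying the split into $\{\dm_l^{\tp}\omega\}$-cosets, and showing cleanly that the apparently richer family of coset equations produced by the Fourier identification is exhausted by the single diagonal relation \er{bi:wfb}. The equivalence $\tz_{b_k,\dm_k}\sd_{\tilde{b}_l,\dm_l}=\delta_{k,l}\ID\Leftrightarrow \er{bi:wfb}$ closes the loop: combined with the already-assumed \er{pr:frt:mix} it gives $\cW\tilde{\cV}=\ID_{(\dsq)^{1\times(s+r)}}$ and hence (i) via Proposition~\ref{wvdef}.
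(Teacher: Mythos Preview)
Your proposal is correct and follows essentially the same route as the paper: both translate $\cW\tilde{\cV}=\ID$ into the block conditions $\tz_{b_k,\dm_k}\sd_{\tilde{b}_l,\dm_l}=\delta_{k,l}\ID$, pass to Fourier series, and use the coset decomposition of $\Omega_k$ modulo $\Omega_k\cap\Omega_l$ to reduce to the relations~\er{bi:wfb}. The only cosmetic difference is that the paper carries out the shift-separation step by constructing explicit smooth bump functions for $\wh{w_l}$ supported away from the offending translates, whereas you invoke the linear independence of the translates $\wh{w_l}(\cdot+2\pi\tau)$ abstractly; both arguments are valid and your observation that the coset equations are just translates of the single relation~\er{bi:wfb} is exactly what makes the $(\mathrm{ii})\Rightarrow(\mathrm{i})$ direction immediate.
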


\bp Let $\cW$ and $\tilde{\cV}$ be the discrete framelet analysis and synthesis operators employing the filter bank $(\{b_l!\dm_l\}_{l=0}^s,\{\tilde{b}_l!\dm_l\}_{l=0}^s)$.\\

(i) $\Rightarrow$ (ii): If item (i) holds, then by item (iii) of Proposition~\ref{wvdef}, we have $\tilde{\cV}\cW=\ID_{\dsq}$. This means that $(\{b_l!\dm_l\}_{l=0}^s,\{\tilde{b}_l!\dm_l\}_{l=0}^s)$ must be a dual framelet filter bank with mixed dilation factors. \\

On the other hand, let $w:=(w_1,\dots,w_s,w_0)$ with $w_1,\dots,w_s\in\dlp{1}$ and $w_0\in\dlrs{1}{1}{r}$. By $\cW\tilde{\cV}=\ID_{(\dsq)^{1\times (s+r)}}$, we have
\be\label{WV:id}w_k=(\cW\tilde{\cV}w)_k=\sum_{l=0}^s\tz_{b_k,\dm_k}\sd_{\tilde{b}_l,\dm_l}w_l,\qquad k=0,\dots,s.\ee
By taking the Fourier series of both sides of \er{WV:id}, we have
\be\label{f:WV:id}\begin{aligned}
	\wh{w_k}(\xi)&=\sum_{l=0}^s|\det(\dm_k)|^{-\frac{1}{2}}\sum_{\omega\in\Omega_k}\wh{\sd_{\tilde{b_l},\dm_l}w_l}(\dm_k^{-\tp}\xi+2\pi\omega)\ol{\wh{b_k}(\dm_k^{-\tp}+2\pi\omega)}^{\tp}\\
	&=\sum_{l=0}^s|\det(\dm_k)|^{-\frac{1}{2}}|\det(\dm_l)|^{\frac{1}{2}}\sum_{\omega\in\Omega_k}\wh{w_l}(\dm_l^{\tp}(\dm_k^{-\tp}\xi+2\pi\omega))\wh{\tilde{b_l}}(\dm_k^{-\tp}\xi+2\pi\omega)\ol{\wh{b_k}(\dm_k^{-\tp}+2\pi\omega)}^{\tp}\\
	&=\sum_{l=0}^s\sum_{\omega\in\Omega_k}\wh{w_l}(\dm_l^{\tp}(\dm_k^{-\tp}\xi+2\pi\omega))\wh{u_{l,k}}(\dm_k^{-\tp}\xi+2\pi\omega)\\
\end{aligned}\ee
where
\be\label{u:k:l}\wh{u_{l,k}}(\xi):=|\det(\dm_k)|^{-\frac{1}{2}}|\det(\dm_l)|^{\frac{1}{2}}\wh{\tilde{b_l}}(\xi)\ol{\wh{b_k}(\xi)}^{\tp},\qquad \xi\in\dR.\ee
As $(\dm_l^{-\tp}\dZ)\cap(\dm_k^{-\tp}\dZ)$ is a sublattice of $\dm_k^{-\tp}\dZ$ which contains $\dZ$, there exist  $\omega_{k,1},\dots,\omega_{k,L_{k,l}}\in\Omega_k$ with $\omega_{k,1}=0$ and $L_{k,l}:=\frac{\#\Omega_k}{\#(\Omega_k\cap\Omega_l)}$ such that
\be\label{sub:lat}\dm_k^{-\tp}\dZ=\bigsqcup_{j_{k,l}=1}^{L_{k,l}}\omega_{k,j_{k,l}}+[(\dm_l^{-\tp}\dZ)\cap(\dm_k^{-\tp}\dZ)].\ee
It follows that
\be\label{sub:lat:2}\begin{aligned}
	&\sum_{\omega\in\Omega_k}\wh{w_l}(\dm_l^{\tp}(\dm_k^{-\tp}\xi+2\pi\omega))\wh{u_{l,k}}(\dm_k^{-\tp}\xi+2\pi\omega)\\
	=&\sum_{j_{k,l}=1}^{L_{k,l}}\wh{w_l}(\dm_l^{\tp}\dm_k^{-\tp}\xi+2\pi\dm_l^{\tp}\omega_{k,j_{k,l}}))\sum_{\omega\in\Omega_k\cap\Omega_l}\wh{u_{l,k}}(\dm_k^{-\tp}\xi+2\pi\dm_l^{\tp}\omega_{k,j_{k,l}}+2\pi\omega).
\end{aligned}\ee
Consequently, we have
\be\label{f:WV:id:2}\wh{w_k}(\xi)=\sum_{l=0}^s\sum_{j_{k,l}=1}^{L_{k,l}}\wh{w_l}(\dm_l^{\tp}\dm_k^{-\tp}\xi+2\pi\dm_l^{\tp}\omega_{k,j_{k,l}}))\sum_{\omega\in\Omega_k\cap\Omega_l}\wh{u_{l,k}}(\dm_k^{-\tp}\xi+2\pi\dm_l^{\tp}\omega_{k,j_{k,l}}+2\pi\omega),\qquad\xi\in\dR,\ee
for all $w\in(\dsq)^{1\times(s+1)}$. Denote $\{e^r_1,\dots,e^r_r\}$ the standard basis of $\mathbb{R}^r$. Choose $w_0:=\pmb{\delta}e^r_j, j=1,\dots,r$ and $w_1=\dots=w_s:=0$, we conclude that the special case of \er{bi:wfb} with $k=l=0$ holds. Similarly choose $w_0:=0$, $w_k:=1$ for some $k\in\{1,\dots,s\}$ and $w_l:=0$ for all $l\neq k$, we can prove the special case of \er{bi:wfb} with $k=l$ and $k,l\neq 0$.\\

Next, we consider the case when $k\neq l$. First fix $k\in\{1,\dots,s\}$. Choose any $w=(w_1,\dots,w_s,w_0)\in\dlrs{1}{1}{(s+r)}$ such that $w_1=\dots=w_s=0$ and $w_0\neq 0$. By \er{f:WV:id:2},
\be\label{f:WV:id:4}0=\wh{w_k}(\dm_k^{\tp}\xi)=\sum_{j_{k,0}=1}^{L_{k,0}}\wh{w_0}(\dm_0^{\tp}\xi+2\pi\dm_0^{\tp}\omega_{k,j_{k,0}}))\sum_{\omega\in\Omega_0\cap\Omega_k}\wh{u_{0,k}}(\xi+2\pi\omega_{k,j_{k,0}}+2\pi\omega),\ee
for all $\xi\in\dR$. Note that if $\Omega_k\subseteq\Omega_0$, then $\dm_0^\tp\omega_{k,j_{k,0}}\in\dZ$ and $L_{k,0}=1$. Thus \er{f:WV:id:4} reduces to
\be\label{f:WV:id:5}0=\wh{w_0}(\dm_0^{\tp}\xi)\sum_{\omega\in \Omega_k}\wh{u_{0,k}}(\xi+2\pi\omega),\qquad \xi\in\dR.\ee
As $w_0\in \dlrs{1}{1}{r}\setminus\{0\}$ is arbitrary, it  follows immediately from \er{f:WV:id:5} that
\be\label{f:WV:id:8}\sum_{\omega\in \Omega_0\cap\Omega_k }\wh{u_{0,k}}(\xi+2\pi\omega)=0\ee
for all $\xi\in\dR$. If $\Omega_k\nsubseteq\Omega_0$, then $\dm_0^{\tp}\omega\notin\dZ$ for all $\omega\in\Omega_k\setminus\Omega_0$. Thus
\be\label{d:k:l}\delta:=\inf_{\omega\in \Omega_k\setminus\Omega_0, y\in\dZ}2\pi\|\dm_0^{\tp}\omega-y\|>0.\ee
Fix $x_0\in(-\pi,\pi)^d$ and set $y_0:=\dm_l^{\tp}x_0$. Choose $w_0$ such that $\eps\in(0,\delta/2)$ and $\wh{w_0}\in (C^\infty(\dT))^{1\times r}$ such that the following hold:
\begin{enumerate}
	\item $\wh{w_0}(\xi)=(e_j^r)^{\tp}$ for all $\xi\in B_\eps(y_0)$ with $j=1,\dots,r$, where
	$$B_\eps(y_0):=\{y\in\dR:\, \|y-y_0\|<\eps\};$$
	
	\item $\wh{w_0}(\xi)=0$ for all $\xi\in B_\eps(y_0)+2\pi\dm_0^{\tp}\omega$ with $\omega\in\Omega_k\setminus\Omega_0$;
	
	\item $\dm_0^{-\tp}B_\eps(y_0)\subseteq (-\pi,\pi)^d$;
	
	\item $\supp(\wh{w_0})\cap[y_0+(-\pi,\pi^d)]\subseteq B_{2\eps}(y_0)$.
	
\end{enumerate}
By our choice of $w_l$, it follows from \er{f:WV:id:5} that
\be\label{f:WV:id:6}\sum_{\omega\in\Omega_0\cap\Omega_k}\wh{u_{0,k}}(\xi+2\pi\omega)=0,\qquad\xi\in \dm_{0}^{\tp}B_{\eps}(y_0).\ee
As $x_0\in(-\pi,\pi)^d$ is arbitrary and $\wh{u_{0,k}}$ is $2\pi\dZ$-periodic, we conclude that \er{f:WV:id:8} holds for a.e. $\xi\in\dR$. Similarly, one can prove that
\be\label{f:WV:id:9}\sum_{\omega\in \Omega_l\cap\Omega_0 }\wh{u_{l,0}}(\xi+2\pi\omega)=0,\qquad \sum_{\omega\in \Omega_l\cap\Omega_k }\wh{u_{l,k}}(\xi+2\pi\omega)=\td(k-l),\ee
for a.e. $\xi\in\dR$ and for all $k,l=1,\dots,s$. This proves item (ii).\\

(ii) $\Rightarrow$ (i): Conversely, suppose item (ii) holds. Then by Theorem~\ref{pr:mix}, $\tilde{\cV}\cW=\ID_{(\dsq)^{1\times r}}$.\\

It remains to show that $\cW\tilde{\cV}=\ID_{(\dsq)^{1\times(s+r)}}$. Let $u_{k,l}$ be defined via \er{u:k:l}. For all $w=(w_1,\dots,w_s,w_0)\in\dlrs{0}{1}{[r(s+1)]}$, we have
\be\label{f:WV:id:7}\begin{aligned}\wh{(\cW\tilde{\cV}w)_k}(\xi)&=\sum_{l=0}^s\sum_{\omega\in\Omega_k}\wh{w_l}(\dm_l^{\tp}(\dm_k^{-\tp}\xi+2\pi\omega))\wh{u_{l,k}}(\dm_k^{-\tp}\xi+2\pi\omega)\\
	&=\sum_{j_{k,l}=1}^{L_{k,l}}\wh{w_l}(\dm_l^{\tp}\dm_k^{-\tp}\xi+2\pi\dm_l^{\tp}\omega_{k,j_{k,l}}))\sum_{\omega\in\Omega_l\cap\Omega_k}\wh{u_{l,k}}(\dm_k^{-\tp}\xi+2\pi\omega_{k,j_{k,l}}+2\pi\omega)\\
	&=\wh{w_k}(\xi),
\end{aligned}\ee
for a.e. $\xi\in\dR$ and all $k=0,1,\dots,s$, where the last equality follows from \er{bi:wfb}. Thus $\cW\tilde{\cV}w=w$ for all $w\in\dlrs{0}{1}{(s+r)}$. By using the locality of the subdivision and transition operators, we conclude that $\cW\tilde{\cV}=\ID_{(\dsq)^{1\times(s+r)}}$. This proves item (i).
\ep

The following result states the critical sampling property of a biorthogonal wavelet filter bank.

\begin{lemma}\label{bfb:csp}Let $b_0,\tilde{b}_0\in\dlrs{0}{r}{r}, b_1,\dots,b_s,\tilde{b}_1,\dots,\tilde{b}_s\in\dlrs{0}{1}{r}$ be finitely supported filters and let $\dm,\dots,\dm_s$ be $d\times d$ dilation matrices.
	If $(\{b_l!\dm_l\}_{l=0}^s,\{\tilde{b}_l!\dm_l\}_{l=0}^s)$ is a biorthogonal wavelet filter bank with mixed dilation factors, then
	\be\label{cr:samp}\frac{r}{|\det(\dm_0)|}+\sum_{l=1}^{s}\frac{1}{|\det(\dm_l)|}=r.\ee
\end{lemma}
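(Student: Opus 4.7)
The plan is to combine the $\omega=0$ instance of the perfect-reconstruction identity~\eqref{pr:frt:mix} with the diagonal ($k=l$) cases of the biorthogonality relation~\eqref{bi:wfb}, evaluating certain trace integrals in two different ways.

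First, substituting $\omega=0$ in~\eqref{pr:frt:mix} and using $0\in\Omega_l$ for every $l$ yields $\sum_{l=0}^{s}\ol{\wh{b_l}(\xi)}^{\tp}\wh{\tilde{b}_l}(\xi)=I_r$ for a.e. $\xi\in\dR$. For $l\ge 1$ the outer product $\ol{\wh{b_l}(\xi)}^{\tp}\wh{\tilde{b}_l}(\xi)$ is a rank-one $r\times r$ matrix whose trace equals the scalar $\wh{\tilde{b}_l}(\xi)\ol{\wh{b_l}(\xi)}^{\tp}$. Taking traces on both sides and integrating $\xi$ over $[0,2\pi]^d$ therefore produces
$$\int_{[0,2\pi]^d}\tr\bigl(\ol{\wh{b_0}(\xi)}^{\tp}\wh{\tilde{b}_0}(\xi)\bigr)d\xi+\sum_{l=1}^{s}\int_{[0,2\pi]^d}\wh{\tilde{b}_l}(\xi)\ol{\wh{b_l}(\xi)}^{\tp}d\xi=r(2\pi)^d.$$

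Next, I evaluate each of these integrals via the corresponding diagonal biorthogonality relation. The $k=l=0$ case of~\eqref{bi:wfb} reads $\sum_{\omega\in\Omega_0}\wh{b_0}(\xi+2\pi\omega)\ol{\wh{\tilde{b}_0}(\xi+2\pi\omega)}^{\tp}=I_r$; taking the trace, integrating over $[0,2\pi]^d$, and using that $\xi\mapsto\tr\bigl(\wh{b_0}(\xi)\ol{\wh{\tilde{b}_0}(\xi)}^{\tp}\bigr)$ is $2\pi\dZ^d$-periodic (so the integral over the translated fundamental domain $[0,2\pi]^d+2\pi\omega$ equals the integral over $[0,2\pi]^d$), I obtain
$$|\det(\dm_0)|\int_{[0,2\pi]^d}\tr\bigl(\wh{b_0}(\xi)\ol{\wh{\tilde{b}_0}(\xi)}^{\tp}\bigr)d\xi=r(2\pi)^d.$$
The same argument applied to~\eqref{bi:wfb} with $k=l\in\{1,\dots,s\}$ (where the right-hand side is the scalar $1$) yields $|\det(\dm_l)|\int_{[0,2\pi]^d}\wh{b_l}(\xi)\ol{\wh{\tilde{b}_l}(\xi)}^{\tp}d\xi=(2\pi)^d$.

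The final step is to reconcile the two orderings of the Hermitian products. Using $\tr(AB^{\ast})=\ol{\tr(A^{\ast}B)}$ for any compatible matrices, each biorthogonality integral computed above is the complex conjugate of the corresponding summand in the PR-trace identity; since the biorthogonality right-hand sides ($I_r$ and $1$) are real, these integrals coincide with their conjugates. Substituting
$$\int_{[0,2\pi]^d}\tr\bigl(\ol{\wh{b_0}(\xi)}^{\tp}\wh{\tilde{b}_0}(\xi)\bigr)d\xi=\frac{r(2\pi)^d}{|\det(\dm_0)|},\qquad \int_{[0,2\pi]^d}\wh{\tilde{b}_l}(\xi)\ol{\wh{b_l}(\xi)}^{\tp}d\xi=\frac{(2\pi)^d}{|\det(\dm_l)|}$$
into the PR-trace equation and dividing by $(2\pi)^d$ delivers~\eqref{cr:samp}. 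The only genuine hazard is bookkeeping of conjugate-transposes when matching the two orderings, together with respecting that $\wh{b_l},\wh{\tilde{b}_l}$ are row vectors for $l\ge 1$ while $\wh{b_0},\wh{\tilde{b}_0}$ are $r\times r$; once this is handled, the argument is mechanical.
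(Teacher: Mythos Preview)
Your proof is correct and follows essentially the same approach as the paper: both take the trace of the $\omega=0$ perfect-reconstruction identity, integrate, and evaluate each summand using the diagonal ($k=l$) biorthogonality relations. The only cosmetic difference is that the paper re-derives the integral identities $|\det(\dm_l)|\int_{[0,2\pi)^d}\wh{b_l}(\xi)\ol{\wh{\tilde{b}_l}(\xi)}^{\tp}d\xi=(2\pi)^d$ (and the $l=0$ analogue) by pairing the operator identity $\cW\tilde{\cV}=\ID$ with delta sequences and applying Parseval, whereas you obtain them more directly by integrating~\eqref{bi:wfb} over a fundamental domain and invoking $2\pi\dZ^d$-periodicity.
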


\bp Denote $\cV$ and $\cW$ (resp. $\tilde{\cV}$ and $\tilde{\cW}$) the discrete framelet synthesis and analysis operators employing the filter bank $\{b_l!\dm_l\}_{l=0}^s$ (resp. $\{\tilde{b}_l!\dm_l\}_{l=0}^s$). By definition of a biorthogonal wavelet filter bank, we have $\cW\tilde{\cV}=\ID_{(\dsq)^{1\times(s+r)}}$. Fix $k=\in\{1,\dots,s\}$ and define $v:=(v_1,\dots,v_s,v_0)\in(\dsq)^{1\times(s+r)}$ via
\be\label{v:sp}v_0:=0,\qquad v_k:=\td,\qquad v_l=0,\qquad l\in\{1,\dots,s\}\setminus\{k\}.\ee	
By Lemma~\ref{sdtr} and \er{sd:ft}, we have
\be\label{V:W:adj:0}\begin{aligned}&1=\la v,v\ra_{\dlp{2}}=\la v,\cW\tilde{\cV}v\ra_{\dlp{2}}=\la \td,\tz_{b_k,\dm_k}\sd_{\tilde{b}_k,\dm_k}\td\ra_{\dlp{2}}\\
	=&\la \sd_{b_k,\dm_k}\td,\sd_{\tilde{b}_k,\dm_k}\td\ra_{\dlp{2}}=\la \wh{\sd_{b_k,\dm_k}}\td,\wh{\sd_{\tilde{b}_k,\dm_k}\td}\ra_{\dTLp{2}}\\
	=&(2\pi)^{-d}|\det(\dm_k)|\int_{[0,2\pi)^{d}}\wh{b_k}(\xi)\ol{\wh{\tilde{b}_k}(\xi)}^{\tp}d\xi\\
\end{aligned}\ee
Similarly, denote $\{e_1^r,\dots,e_r^r\}$ the standard basis of $\mathbb{R}^r$ and choose $w^j:=(w_1,\dots,w_s,w_0^j)\in(\dsq)^{1\times(s+r)}$ with
\be\label{w:sp}w_0^j:=(e_j^r)^{\tp}\td,\qquad w_l=0,\qquad l=1,\dots,s.\ee
One can conclude that
\be\label{V:W:adj:1}1=\la w^j,w^j\ra_{\dlp{2}}=(2\pi)^{-d}|\det(\dm_0)|\int_{[0,2\pi)^{d}}\left(\wh{b_0}(\xi)\ol{\wh{\tilde{b}_0}(\xi)}^{\tp}\right)_{j,j}d\xi,\qquad j=1,\dots,r.\ee
It follows from \er{pr:frt:mix}, \er{V:W:adj:0} and \er{V:W:adj:1} that
\be\label{tr:pr}\begin{aligned}
	r=\tr(I_r)&=\sum_{l=0}^s(2\pi)^{-d}\int_{[0,2\pi)^d}\tr\left(\ol{\wh{b}_l(\xi)}^{\tp}\wh{\tilde{b}_l}(\xi)\right)d\xi\\
	&=\sum_{l=0}^s(2\pi)^{-d}\int_{[0,2\pi)^d}\tr\left(\ol{\wh{b}_l(\xi)}\wh{\tilde{b}_l}(\xi)^{\tp}\right)d\xi\\
	&=\frac{r}{|\det(\dm_0)|}+\sum_{l=1}^s\frac{1}{|\det(\dm_l)|}.
\end{aligned}\ee

\ep

\section{Discrete Affine Systems in $\dlp{2}$}
\label{sec:das}
In this section, we further study the discrete framelet transforms by introducing the notion of a discrete affine system in $\dlp{2}$.\\

Let $b_0,\tilde{b}_0\in\dlrs{0}{r}{r}, b_1,\dots,b_s,\tilde{b}_1,\dots,\tilde{b}_s\in\dlrs{0}{1}{r}$ be finitely supported filters, and let $\dm_0,\dots,\dm_s$ be $d\times d$ dilation matrices. Define
\be\label{blj} \wh{b_{l,j}}(\xi):=\wh{b_l}((\dm_0^{\tp})^{j-1}\xi)\wh{b_0}((\dm_0^{\tp})^{j-2}\xi)\dots\wh{b_0}(\dm_0^{\tp}\xi)\wh{b_0}(\xi),\ee
\be\label{tblj} \wh{\tilde{b}_{l,j}}(\xi):=\wh{\tilde{b}_l}((\dm_0^{\tp})^{j-1}\xi)\wh{\tilde{b}_0}((\dm^{\tp})^{j-2}\xi)\dots\wh{\tilde{b}_0}(\dm_0^{\tp}\xi)\wh{\tilde{b}_0}(\xi),\ee
for all $j\in\N$, $l=0,\dots,s$ and $\xi\in\dR$, with the convention $b_{0,0}:=\pmb{\delta}_0I_r=:\tilde{b}_{0,0}$, $b_{l,1}:=b_l$ and $\tilde{b}_{l,1}:=\tilde{b}_l$ for $l=1,\dots,s$. In other words:
\be\label{blj:1} b_{l,j}:=(b_l\uparrow \dm_0^{j-1})*(b_0\uparrow \dm_0^{j-2})*\dots*(b_0\uparrow \dm_0)*b_0,\ee
\be\label{tblj:1} \tilde{b}_{l,j}:=(\tilde{b}_l\uparrow \dm_0^{j-1})*(\tilde{b}_0\uparrow \dm_0^{j-2})*\dots*(\tilde{b}_0\uparrow \dm_0)*\tilde{b}_0.\ee
For all $k\in\dZ$, $J\in\N$ and $l=0,\dots,s$, define
\be\label{b:l:j:k}b_{l,j;k}:=|\det(\dm_0)|^{\frac{j-1}{2}}|\det(\dm_l)|^{\frac{1}{2}}b_{l,j}(\cdot-\dm_0^{j-1}\dm_lk),\ee
\be\label{tb:l:j:k}\tilde{b}_{l,j;k}:=|\det(\dm_0)|^{\frac{j-1}{2}}|\det(\dm_l)|^{\frac{1}{2}}\tilde{b}_{l,j}(\cdot-\dm_0^{j-1}\dm_lk).\ee

The following lemma is an important result of the multi-level discrete analysis and synthesis processes.

\begin{lemma}\label{key1}Let $b_0\in\dlrs{0}{r}{r},b_1,\dots,b_s\in\dlrs{0}{1}{r}$ be finitely supported filters, and let $\dm_0,\dots,\dm_s$ be $d\times d$ dilation matrices.	
	\begin{enumerate}	
		
		\item[(i)] For any fixed $v_{0,0}\in(\dsq)^{1\times r}$, define $v_{l,j}$ as in \er{dft:decomp} for all $l=0,\dots,s $ and $j=1,\dots,J$. Then we have
		\be\label{v:j:j:k}v_{l,j}(k)=\la v_{0,0},b_{l,j;k}\ra_{\dlp{2}}=\sum_{n\in\dZ}v_{0,0}(n)\ol{b_{l,j;k}(n)}^{\tp},\qquad k\in\dZ.\ee

		\item[(ii)]	For $v_{0,J},v_{l,j}\in\dsq $ with $l=1,\dots,s$ and $j=1,\dots,J$, we have
		\be\label{cv:J}\cV_J(0,\dots,0,v_{0,J})=\sum_{k\in\dZ}v_{0,J}(k)b_{0,J;k}\ee
		and
		\be\label{cv:J:1}\cV_J(0,\dots,0,v_{l,j},0,\dots,0)=\sum_{k\in\dZ}v_{l,j}(k)b_{l,j;k},\qquad l=1,\dots,s,\quad j=1,\dots,J,\ee
		where $\cV_J$ is the $J$-level discrete synthesis operator employing the filter bank $\{b_l!\dm_l\}_{l=0}^s$.	
		
	\end{enumerate}
\end{lemma}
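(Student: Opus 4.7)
The plan is to collapse each iterated application of the transition operator $\tz_{b_0,\dm_0}$ (for part~(i)) or the subdivision operator $\sd_{b_0,\dm_0}$ (for part~(ii)) into a single operator with composite filter $b_{l,j}$ and composite dilation $\dm_0^{j-1}\dm_l$, via repeated use of the composition identities in Lemma~\ref{comp}. Once this reduction is done, both identities drop out upon unwinding the definitions \er{sd}--\er{tz} together with the normalization \er{b:l:j:k}; no further analytic ingredient is required.

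For part~(i), I will first prove by induction on $j$ that $v_{0,j}=\tz_{b_{0,j},\dm_0^j}v_{0,0}$. The base case $j=1$ is immediate from \er{dft:decomp} and the convention $b_{0,1}=b_0$. For the inductive step, write $v_{0,j}=\tz_{b_0,\dm_0}\tz_{b_{0,j-1},\dm_0^{j-1}}v_{0,0}$ and apply Lemma~\ref{comp} with $u_1=b_{0,j-1}$, $\dm_1=\dm_0^{j-1}$, $u_2=b_0$, $\dm_2=\dm_0$ to obtain $\tz_{(b_0\uparrow\dm_0^{j-1})*b_{0,j-1},\dm_0^j}v_{0,0}=\tz_{b_{0,j},\dm_0^j}v_{0,0}$, where the last equality uses \er{blj:1}. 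For general $l\in\{1,\dots,s\}$, a single extra application of Lemma~\ref{comp} yields $v_{l,j}=\tz_{b_l,\dm_l}\tz_{b_{0,j-1},\dm_0^{j-1}}v_{0,0}=\tz_{b_{l,j},\dm_0^{j-1}\dm_l}v_{0,0}$. Expanding the right-hand side via \er{tz}, observing that $|\det(\dm_0^{j-1}\dm_l)|^{1/2}=|\det(\dm_0)|^{(j-1)/2}|\det(\dm_l)|^{1/2}$, and matching with \er{b:l:j:k} produces the claimed identity $v_{l,j}(k)=\la v_{0,0},b_{l,j;k}\ra_{\dlp{2}}$.

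For part~(ii), I will trace the recursion \er{dft:recons} (with $\tilde{b}_l$ replaced by $b_l$, as $\cV_J$ employs the filter bank $\{b_l!\dm_l\}_{l=0}^s$) downward from level $J$. When the input is $(0,\dots,0,v_{0,J})$, every high-pass component vanishes at every level, so the recursion collapses to $\tilde{v}_{j-1}=\sd_{b_0,\dm_0}\tilde{v}_j$, yielding $\tilde{v}_0=\sd_{b_0,\dm_0}^Jv_{0,J}$; iterated use of Lemma~\ref{comp} converts this into $\sd_{b_{0,J},\dm_0^J}v_{0,J}$, and unfolding \er{sd} and comparing with \er{b:l:j:k} (for $l=0$, $j=J$) delivers \er{cv:J}. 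For a single-component input at position $(l,j)$ with $l\neq 0$, the reconstruction produces zeros at levels $J,J-1,\dots,j+1$; at level $j$ only the $l$-th subdivision contributes, giving $\sd_{b_l,\dm_l}v_{l,j}$; and at each of the remaining levels $j-1,\dots,1$ only $\sd_{b_0,\dm_0}$ acts on the running low-pass. Hence $\tilde{v}_0=\sd_{b_0,\dm_0}^{j-1}\sd_{b_l,\dm_l}v_{l,j}$, which by Lemma~\ref{comp} (combined with \er{blj:1} and the identity $(f*g)\uparrow\dm=(f\uparrow\dm)*(g\uparrow\dm)$) equals $\sd_{b_{l,j},\dm_0^{j-1}\dm_l}v_{l,j}$; one final unfolding through \er{sd} and \er{b:l:j:k} gives \er{cv:J:1}.

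The principal obstacle is purely bookkeeping: Lemma~\ref{comp} is not symmetric in its two arguments, so the order in which the factors $b_0\uparrow\dm_0^{k}$ accumulate inside the composite filter must be tracked carefully, and the $|\det(\cdot)|^{1/2}$ factors hidden in \er{sd}, \er{tz}, and \er{b:l:j:k} must all be reconciled. Beyond these combinatorial checks, no further analytic input is needed.
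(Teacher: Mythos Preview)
Your proposal is correct and follows essentially the same route as the paper: both arguments reduce the iterated transition/subdivision operators to a single operator with composite filter $b_{l,j}$ and composite dilation $\dm_0^{j-1}\dm_l$ via Lemma~\ref{comp}, then unfold the definitions \er{sd}, \er{tz}, and \er{b:l:j:k} to read off the claimed identities. Your treatment is in fact slightly more explicit than the paper's in two places---you spell out the induction on $j$ and you record the auxiliary identity $(f*g)\uparrow\dm=(f\uparrow\dm)*(g\uparrow\dm)$ needed to collapse $\sd_{b_0,\dm_0}^{j-1}\sd_{b_l,\dm_l}$ for $l\neq 0$---whereas the paper handles the latter case by the single remark ``Similarly, one can conclude that \er{cv:J:1} holds.''
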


\bp For every $k\in\dZ$, we have
$$\begin{aligned}v_{l,j}(k)&=\tz_{b_l,\dm_l}v_{0,j-1}(k)=\tz_{b_l,M_l}\mathcal{T}_{b_0,\dm_0}^{j-1}v_{0,0}(k)\\
&=\tz_{(b_l\uparrow \dm_0^{j-1})*(b_0\uparrow \dm_0^{j-2})*\dots*(b_0\uparrow \dm_0)*b_0,\dm_0^{j-1}\dm_l}v_{0,0}(k)\quad\text{(by Lemma~\ref{comp})}\\
&=\tz_{b_{l,j},\dm_0^{j-1}\dm_l}v_{0,0}(k)\\
&=|\det(\dm_l)|^{\frac{1}{2}}|\det(\dm_0)|^{\frac{j-1}{2}}\sum_{n\in\dZ}v_{0,0}(n)\ol{b_{l,j}(n-\dm_0^{j-1}\dm_lk)}^{\tp}\\
&=\langle v_{0,0},b_{l,j;k}\rangle_{\dlp{2}}.
\end{aligned}$$
This proves item (i).\\
To prove item (ii), Define
\be\label{dst:syn}\mathring{v}_{0,j-1}=\sd_{b_0,\dm_0}\mathring{v}_{l,j},\qquad j=J,\dots,1,\ee
with then convention $\mathring{v}_{0,J}=v_{0,J}$. Then
$$\begin{aligned}\cV_J(0,\dots,0,v_{0,J})&=\mathring{v}_{0,0}=\sd_{b_0,\dm_0}^Jv_{0,J}\\
&=\sd_{(b_0\uparrow \dm_0^{j-1})*\dots*(b_0\uparrow \dm_0)*b_0,\dm_0^J}v_{0,J}\quad\text{(by Lemma~\ref{comp})}\\
&=\sd_{b_{0,J},\dm_0^J}v_{0,J}\\
&=|\det(\dm_0)|^{\frac{J}{2}}\sum_{k\in\dZ}v_{0,J}(k)b_{0,J}(\cdot-\dm_0^Jk)\\
&=\sum_{k\in\dZ}v_{0,J}(k)b_{0,J;k}.
\end{aligned}$$
Similarly, one can conclude that
\be\label{cv:j:2}\cV_J(0,\dots,0,v_{l,j},0,\dots,0)=\sum_{k\in\dZ}v_{l,j}(k)b_{l,j;k},\qquad  l=1,\dots,s,\quad j=1,\dots,J.\ee
This proves item (ii).	
\ep

We now state the definition of a discrete affine system associated with a filter bank.

\begin{definition}Let $b_0\in\dlrs{0}{r}{r},b_1,\dots,b_s\in\dlrs{0}{1}{r}$ be finitely supported filters and let $\dm_0,\dots,\dm_s$ be $d\times d$ dilation matrices. For every $J\in\N$, the \textbf{$J$-level discrete affine system} associated to the filter bank $\{b_l!\dm_l\}_{l=0}^s$ is defined via
	\be\label{das}\DAS_J(\{b_l!\dm_l\}_{l=0}^s)=\{b_{0,J;k}:k\in\dZ\}\cup\{b_{l,j;k}: l=1,\dots,s; j=1,\dots,J;k\in\dZ\},\ee
	where $b_{l,j;k}$ is defined via \er{b:l:j:k} for all $k\in\dZ, j\in\N$ and $l=0,\dots,s$.
\end{definition}

The stability of a filter bank is naturally linked to the frame property of its associated discrete affine systems.

\begin{lemma}\label{stadas}Let $b_0\in\dlrs{0}{r}{r},b_1,\dots,b_s\in\dlrs{0}{1}{r}$ be finitely supported filters and let $\dm_0,\dots,\dm_s$ be $d\times d$ dilation matrices. Then the filter bank $\{b_l!\dm_l\}_{l=0}^s$ has stability in $\dlp{2}$ if and only if there exist $C_1,C_2>0$ such that
	\be\label{das:frm}C_1\|v\|_{\dlrs{r}{1}{r}}^2\le\sum_{k\in\dZ}\|\la v,b_{0,J;k}\ra_{\dlp{2}}\|^2+\sum_{j=1}^J\sum_{l=1}^s\sum_{k\in\dZ}|\la v,b_{l,j;k}\ra_{\dlp{2}}|^2\le C_2\|v\|_{\dlrs{2}{1}{r}}^2\ee
	holds for all $v\in\dlrs{2}{1}{r}$ and $J\in\N$.
\end{lemma}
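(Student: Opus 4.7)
The key observation driving the proof is that Lemma~\ref{key1}(i) identifies the entries of the $J$-level analysis operator with precisely the inner products that appear in the frame inequality \eqref{das:frm}. So the plan is to show that the middle quantity of \eqref{das:frm} is \emph{literally equal} to $\|\cW_J v\|_{\dlrs{2}{1}{(sJ+r)}}^2$, after which the equivalence of \eqref{l2:sta} and \eqref{das:frm} is immediate.

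More explicitly, my first step will be to unpack the norm on the range of $\cW_J$. By \eqref{dist:anal}, applying $\cW_J$ to $v\in\dlrs{2}{1}{r}$ produces the tuple $(v_{1,1},\dots,v_{s,1},\dots,v_{1,J},\dots,v_{s,J},v_{0,J})$, where the first $sJ$ components are scalar-valued (since $b_l\in\dlrs{0}{1}{r}$ for $l\ge 1$) and the last component takes values in $\C^{1\times r}$ (since $b_0\in\dlrs{0}{r}{r}$). Consequently
\[ \|\cW_J v\|_{\dlrs{2}{1}{(sJ+r)}}^2 = \sum_{k\in\dZ}\|v_{0,J}(k)\|^2 + \sum_{l=1}^s\sum_{j=1}^J\sum_{k\in\dZ}|v_{l,j}(k)|^2. \]

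Next I apply Lemma~\ref{key1}(i), which gives $v_{l,j}(k)=\la v,b_{l,j;k}\ra_{\dlp{2}}$ for every $l\in\{0,\dots,s\}$, $j\in\{1,\dots,J\}$ and $k\in\dZ$. Substituting this in, the right-hand side above becomes exactly the middle quantity of \eqref{das:frm}. Therefore the stability condition \eqref{l2:sta} in the definition of a framelet filter bank coincides term by term with the frame-type inequality \eqref{das:frm}, and the equivalence follows at once.

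There is no real obstacle here; the only point requiring minor care is correctly pairing the scalar components (for $l\ge 1$) with the vector-valued last component (for $l=0$) when expanding the $\ell^2$-norm, so that the $\|\la v,b_{0,J;k}\ra_{\dlp{2}}\|^2$ term in \eqref{das:frm} and the $|\la v,b_{l,j;k}\ra_{\dlp{2}}|^2$ terms both arise naturally from $\|\cW_J v\|^2$. Once that bookkeeping is in place, the argument is a direct identification.
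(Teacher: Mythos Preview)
Your proposal is correct and follows essentially the same approach as the paper: both arguments expand $\|\cW_J v\|^2$ componentwise, invoke Lemma~\ref{key1}(i) to identify $v_{l,j}(k)=\la v,b_{l,j;k}\ra_{\dlp{2}}$, and thereby show that the middle term of \eqref{das:frm} equals $\|\cW_J v\|_{\dlrs{2}{1}{(sJ+r)}}^2$, from which the equivalence with \eqref{l2:sta} is immediate.
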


\bp Let $\cW_J$ be the $J$-level discrete analysis operator employing the filter bank $\{b_l!\dm_l\}_{l=0}^s$. By item (i) of Lemma~\ref{key1}, we have
\be\label{sta:fb}\begin{aligned}\|\cW_Jv\|_{\dlrs{2}{1}{[r(sJ+1)]}}^2&=\|v_{0,J}\|_{\dlrs{2}{1}{r}}^2+\sum_{j=1}^J\sum_{l=1}^s\|v_{l,j}\|_{\dlrs{2}{1}{r}}^2\\
	&=\sum_{k\in\dZ}\|\la v,b_{0,J;k}\ra_{\dlp{2}}\|^2+\sum_{j=1}^J\sum_{l=1}^s\sum_{k\in\dZ}\|\la v,b_{l,j;k}\ra_{\dlp{2}}\|^2.
\end{aligned}\ee
Hence, the result follows immediately.

\ep

The associated discrete affine systems of a dual framelet filter bank have the frame expansion property, which is illustrated by the following result.

\begin{lemma}\label{fraff}Let $b_0,\tilde{b}_0\in\dlrs{0}{r}{r},b_1,\dots,b_s,\tilde{b}_1,\dots,\tilde{b}_s\in\dlrs{0}{1}{r}$ be finitely supported filters and let $\dm_0,\dots,\dm_s$  be $d\times d$ dilation matrices. Then $(\{b_l!\dm_l\}_{l=0}^s,\{\tilde{b}_l!\dm_l\}_{l=0}^s)$ is a dual framelet filter bank with mixed dilation factors if and only if
	\be\label{frdecomp}v=\sum_{k\in\dZ}\la v,b_{0,J;k}\ra_{\dlp{2}} \tilde{b}_{0,J;k}+\sum_{j=1}^J\sum_{l=1}^s\sum_{k\in\dZ}\la v,b_{l,j;k}\ra_{\dlp{2}} \tilde{b}_{l,j;k}\ee
	for all $v\in \dlrs{2}{1}{r}$ and $J\in\N$, where $b_{l,j;k}$ and $\tilde{b}_{l,j;k}$ are defined as \er{b:l:j:k} and \er{tb:l:j:k} for all $k\in\dZ, j\in\N$ and $l=0,\dots,s$.
\end{lemma}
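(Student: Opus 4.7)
The plan is to translate the operator identity $\tilde{\cV}_J\cW_J = \ID_{(\dsq)^{1\times r}}$ characterizing dual framelet filter banks in Theorem~\ref{pr:mix} directly into the frame-type decomposition \er{frdecomp}, using Lemma~\ref{key1} as the dictionary between the framelet coefficients $v_{l,j}(k)$ and the sequences $b_{l,j;k}$, $\tilde{b}_{l,j;k}$.

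For the forward direction, I would fix $J\in\N$ and $v\in\dlrs{2}{1}{r}$ and compute $\tilde{\cV}_J\cW_Jv$ from the definitions. Writing $\cW_Jv=(v_{1,1},\dots,v_{s,1},\dots,v_{1,J},\dots,v_{s,J},v_{0,J})$ with $v_{l,j}$ as in \er{dft:decomp}, linearity of $\tilde{\cV}_J$ yields
$$\tilde{\cV}_J\cW_Jv=\tilde{\cV}_J(0,\dots,0,v_{0,J})+\sum_{j=1}^J\sum_{l=1}^s\tilde{\cV}_J(0,\dots,0,v_{l,j},0,\dots,0).$$
Applying Lemma~\ref{key1}(ii) to the tilde filter bank rewrites each summand as $\sum_{k\in\dZ}v_{0,J}(k)\tilde{b}_{0,J;k}$ or $\sum_{k\in\dZ}v_{l,j}(k)\tilde{b}_{l,j;k}$, and Lemma~\ref{key1}(i) identifies $v_{0,J}(k)=\la v,b_{0,J;k}\ra_{\dlp{2}}$ and $v_{l,j}(k)=\la v,b_{l,j;k}\ra_{\dlp{2}}$. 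Thus $\tilde{\cV}_J\cW_Jv$ equals the right-hand side of \er{frdecomp}, and Theorem~\ref{pr:mix}\,(iii)$\Rightarrow$(i) forces this to equal $v$.

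For the converse, assume \er{frdecomp} holds for every $v\in\dlrs{2}{1}{r}$ and $J\in\N$. Running the same chain of identities in reverse expresses the right-hand side of \er{frdecomp} as $\tilde{\cV}_J\cW_Jv$, so $\tilde{\cV}_J\cW_Jv=v$ for all such $v$, and in particular for all $v\in\dlrs{0}{1}{r}$. Taking $J=1$ and invoking Theorem~\ref{pr:mix}\,(ii)$\Rightarrow$(iii) shows that $(\{b_l!\dm_l\}_{l=0}^s,\{\tilde{b}_l!\dm_l\}_{l=0}^s)$ is a dual framelet filter bank.

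The one point requiring care is the meaning of the infinite series in \er{frdecomp} when $v\in\dlrs{2}{1}{r}$. Since every $b_l,\tilde{b}_l$ is finitely supported, Lemma~\ref{sdtr} combined with Lemma~\ref{comp} ensures that $\cW_J$ and $\tilde{\cV}_J$ act boundedly between the appropriate $\dlp{2}$ spaces; in particular $v_{l,j}\in\dlp{2}$ for each $l,j$, and the formal expansions produced by Lemma~\ref{key1}(ii) coincide with $\tilde{\cV}_J$ evaluated coordinate-wise and converge unconditionally in $\dlp{2}$. Once this $\dlp{2}$-boundedness is in hand, the rest of the argument is pure bookkeeping, so I expect no substantive obstacle beyond carefully matching the two Lemma~\ref{key1} calculations.
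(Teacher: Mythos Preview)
Your proposal is correct and follows essentially the same route as the paper: both arguments establish the identity $\tilde{\cV}_J\cW_Jv=\text{RHS of \er{frdecomp}}$ via Lemma~\ref{key1} and then appeal to Theorem~\ref{pr:mix} to equate the dual framelet filter bank property with $\tilde{\cV}_J\cW_Jv=v$. Your write-up is in fact slightly more explicit than the paper's, which compresses the forward and converse directions into a single ``if and only if'' sentence and leaves the $\dlp{2}$-boundedness of $\cW_J,\tilde{\cV}_J$ implicit.
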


\bp Let $v_{0,0}\in\dlrs{2}{1}{r}$ and $J\in\N$. Define $v_{l,j}$ as in \er{dft:decomp} for all $j=1,\dots,J$ and $l=0,\dots,s$. Then by Lemma~\ref{key1}, we have
\be\label{dft:key1}\begin{aligned}\tilde{\cV}_J\cW_Jv_{0,0}&=\tilde{\cV}_J(v_{1,1},\dots, v_{s,1},\dots,v_{1,J},\dots,v_{s,J},v_{0,J})\\
	&=\sum_{k\in\dZ}v_{0,J}(k) \tilde{b}_{0,J;k}+\sum_{j=1}^J\sum_{l=1}^s\sum_{k\in\dZ}\tilde{v}_{l,j}(k) \tilde{b}_{l,j;k}\\
	&=\sum_{k\in\dZ}\la v_{0,0},b_{0,J;k}\ra_{\dlp{2}} \tilde{b}_{0,J;k}+\sum_{j=1}^J\sum_{l=1}^s\sum_{k\in\dZ}\langle v_{0,0},b_{l,j;k}\rangle_{\dlp{2}} \tilde{b}_{l,j;k}.
\end{aligned}\ee
Hence $(\{b_l!\dm_l\}_{l=0}^s,\{\tilde{b}_l!\dm_l\}_{l=0}^s)$ is a dual framelet filter bank with mixed sampling factors if and only if \er{frdecomp} holds for all $v\in \dlrs{2}{1}{r}$ and $J\in\N$.
\ep

Now, we are at the stage to give a complete characterization of a dual framelet filter bank by using its associated discrete affine systems, which is summarized as the following theorem:

\begin{theorem}\label{frdas}Let $b_0,\tilde{b}_0\in\dlrs{0}{r}{r},b_1,\dots,b_s,\tilde{b}_1,\dots,\tilde{b}_s\in\dlrs{0}{1}{r}$ be finitely supported filters and let $\dm_0,\dots,\dm_s$ be $d\times d$ dilation matrices. The following statements are equivalent.
	\begin{enumerate}
		\item[(i)] $(\{b_l!\dm_l\}_{l=0}^s,\{\tilde{b}_l!\dm_l\}_{l=0}^s)$ is a dual framelet filter bank with mixed dilation factors.
		
		\item[(ii)] For all $v,w\in\dlrs{2}{1}{r}$:
		\be\label{dfrexp1}\la v,w\ra_{\dlp{2}}=\sum_{l=0}^s\sum_{k\in\dZ}\la v,b_{l,1;k}\ra_{\dlp{2}}\la \tilde{b}_{l,1;k},w\ra_{\dlp{2}}.\ee
		
		\item[(iii)] For all $J\in\N$ and $v,w\in\dlrs{2}{1}{r}$:
		\be\label{dfrexpJ}\la v,w\ra_{\dlp{2}}=\sum_{k\in\dZ}\langle v,b_{0,J;k}\ra_{\dlp{2}}\la \tilde{b}_{0,J;k},w\ra_{\dlp{2}}+\sum_{j=1}^J\sum_{l=1}^s\sum_{k\in\dZ}\la v,b_{l,j;k}\ra_{\dlp{2}}\la \tilde{b}_{l,j;k},w\ra_{\dlp{2}}.\ee

		\item[(iv)] (\textbf{Cascade structure}) For all $j\in\N$ and $v,w\in\dlrs{2}{1}{r}$:
		\be\label{dfrcas}\begin{aligned}&\sum_{k\in\dZ}\la v,b_{0,j-1;k}\ra_{\dlp{2}}\la \tilde{b}_{0,j-1;k},w\ra_{\dlp{2}}
			=\sum_{l=0}^s\sum_{k\in\dZ}\la v,b_{l,j;k}\ra_{\dlp{2}}\la \tilde{b}_{l,j;k},w\ra_{\dlp{2}}\end{aligned},\ee
		with the convention $\tilde{b}_{0,0}=b_{0,0}:=\td I_r$ and $\tilde{b}_{0,0;k}=b_{0,0;k}:=\td_kI_r$ where $\td_k:=\td(\cdot-k)$.\\

		***************************************************************\\
		\item[] If further assume that $\{b_l!M_l\}_{l=0}^s$ and $\{\tilde{b}_l!M_l\}_{l=0}^s$ have stability in $\dlp{2}$, then each of the above statements is equivalent to the following statement:
		
		\item[(v)] \er{frdecomp} holds, and moreover, there exist $C_1,C_2>0$ such that \er{das:frm} and
		\be\label{t:das:frm}C_2^{-1}\|v\|_{\dlrs{2}{1}{r}}^2\le\sum_{k\in\dZ}\|\la v,\tilde{b}_{0,J;k}\ra_{\dlp{2}}\|^2+\sum_{j=1}^J\sum_{l=1}^s\sum_{k\in\dZ}|\la v,\tilde{b}_{l,j;k}\ra_{\dlp{2}}|^2\le C_1^{-1}\|v\|_{\dlrs{2}{1}{r}}^2\ee
		hold for all $J\in\N$ and $v\in\dlrs{2}{1}{r}$.
		
	\end{enumerate}
\end{theorem}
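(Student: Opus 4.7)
The plan is to use Lemma~\ref{fraff} as the backbone, since it already equates (i) with the strong $\dlp{2}$-expansion \er{frdecomp} for every $v\in\dlrs{2}{1}{r}$ and every $J\in\N$. From this I will establish the cycle (i) $\Rightarrow$ (iii) $\Rightarrow$ (ii) $\Rightarrow$ (i). For (i) $\Rightarrow$ (iii), I will simply apply the continuous functional $\la\cdot,w\ra_{\dlp{2}}$ to both sides of \er{frdecomp} and pass it inside the $\dlp{2}$-convergent series to obtain \er{dfrexpJ}. The implication (iii) $\Rightarrow$ (ii) is the special case $J=1$.

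The implication (ii) $\Rightarrow$ (i) requires an operator-theoretic reinterpretation. By Lemma~\ref{key1}(i) with $J=1$, the quantity $\la v,b_{l,1;k}\ra_{\dlp{2}}$ is the $k$-th entry of $\tz_{b_l,\dm_l}v$, and by Lemma~\ref{key1}(ii), $\sum_{k\in\dZ}\la v,b_{l,1;k}\ra_{\dlp{2}}\tilde{b}_{l,1;k}=\sd_{\tilde{b}_l,\dm_l}\tz_{b_l,\dm_l}v$. Summing over $l=0,\dots,s$ and invoking \er{vw:1}, the right-hand side of \er{dfrexp1} becomes $\la\tilde{\cV}\cW v,w\ra_{\dlp{2}}$, so (ii) forces $\tilde{\cV}\cW v=v$ on $\dlrs{2}{1}{r}$, hence in particular on $\dlrs{0}{1}{r}$, and Theorem~\ref{pr:mix}(ii) then delivers (i).

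The cascade (iv) I will obtain as the telescoping increment of (iii). For $j\ge 2$, subtracting the $(j-1)$-level instance of \er{dfrexpJ} from the $j$-level instance cancels every term with scale index at most $j-1$ and leaves precisely \er{dfrcas}. For $j=1$, the convention $\tilde{b}_{0,0;k}=b_{0,0;k}=\td_kI_r$ collapses the left-hand side of \er{dfrcas} to $\sum_{k\in\dZ}v(k)\ol{w(k)}^{\tp}=\la v,w\ra_{\dlp{2}}$, so the $j=1$ cascade coincides with (ii). Conversely, (iv) implies (iii) by a finite induction on $J$, adding \er{dfrcas} with $j=J$ to the $(J-1)$-level identity at each step.

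Under the stability hypothesis, the equivalence with (v) is essentially bookkeeping: Lemma~\ref{stadas} converts stability of each filter bank into the frame bounds \er{das:frm} and \er{t:das:frm}, while Lemma~\ref{fraff} equates (i) with \er{frdecomp}, together giving (i) $\Rightarrow$ (v); the converse is immediate since (v) explicitly contains \er{frdecomp}. The main point to watch throughout is the interchange of $\la\cdot,w\ra_{\dlp{2}}$ with the double sum over $(j,l,k)$ when passing between (iii) and \er{frdecomp}: for fixed $J$ the sum in $j$ is finite and the sum over $k$ converges in $\dlp{2}$ by the boundedness of $\sd_{\tilde{b}_l,\dm_l}$ on $\dlp{2}$, so continuity of the inner product makes the interchange routine. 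The only mildly delicate step is (ii) $\Rightarrow$ (i), where one must recognize that the scalar identity \er{dfrexp1} is equivalent to an operator identity on $\dlrs{2}{1}{r}$ before pulling in Theorem~\ref{pr:mix}.
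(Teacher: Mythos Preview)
Your argument is correct and closes the full cycle (i)$\Leftrightarrow$(ii)$\Leftrightarrow$(iii)$\Leftrightarrow$(iv), with (v) handled exactly as the paper does via Lemmas~\ref{fraff} and~\ref{stadas}. The one place where your route genuinely differs from the paper is the link to (iv). You reach (iv) \emph{indirectly}: (ii)$\Rightarrow$(i)$\Rightarrow$(iii) via Lemma~\ref{fraff} and Theorem~\ref{pr:mix}, and then obtain (iv) by subtracting the $(j-1)$-level instance of \er{dfrexpJ} from the $j$-level one. The paper instead proves (ii)$\Rightarrow$(iv) \emph{directly} by the filter-level factorization
\[
b_{l,j;k}=\sum_{m\in\dZ} b_{l,1;k}(m)\,b_{0,j-1;m},
\]
which yields $\la v,b_{l,j;k}\ra=\la B_{0,j-1;v},b_{l,1;k}\ra$ with $B_{0,j-1;v}(m):=\la v,b_{0,j-1;m}\ra$, and then applies the one-level identity \er{dfrexp1} to the sequence $B_{0,j-1;v}$. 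Your telescoping argument is shorter and avoids this computation, but it relies on already having (iii); the paper's computation is self-contained at level (ii) and makes the ``cascade'' structure of \er{dfrcas} explicit as a change of variable between scales, which is useful elsewhere (e.g.\ in the proof of Theorem~\ref{charfrl2}). Either route is perfectly valid here.
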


\bp
(i)$\Leftrightarrow$(ii): Follows immediately from Lemma~\ref{fraff}.\\

(ii)$\Leftrightarrow$(iv): Suppose that (iv) holds. Then we have
\be\label{casc:0}\begin{aligned}v&=\sum_{k\in\dZ}\la v,\pmb{\delta}_kI_r\ra_{\dlp{2}}\pmb{\delta}_kI_r=\sum_{k\dZ}\la v,b_{0,0;k}\ra_{\dlp{2}}\tilde{b}_{0,0;k}=\sum_{l=0}^s\sum_{k\in\dZ}\la v,b_{l,1;k}\ra_{\dlp{2}}\tilde{b}_{l,1;k}
\end{aligned}\ee
for all $v\in\dlrs{2}{1}{r}$. Thus, (ii) holds. Conversely, suppose that (ii) holds. Note that for all $l=0,\dots,s$ and $j\in\N$, we have
\be\label{casc:1}b_{l,j}=(b_{l,1}\uparrow \dm_0^{j-1})*b_{0,j-1}=\sum_{n\in\dZ}(b_{l,1}\uparrow \dm_0^{j-1})(n)b_{0,j-1}(\cdot-n)=\sum_{k\in\dZ}b_{l,1}(k)b_{0,j-1}(\cdot-\dm_0^{j-1}k).\ee
For all $k\in\dZ$, $l=0,\dots,s$ and $j\in\N$, it follows that
\be\label{bljk}\begin{aligned}b_{l,j;k}&=|\det(\dm_0)|^{\frac{j-1}{2}}|\det(\dm_l)|^{\frac{1}{2}}b_{l,j}(\cdot-\dm_0^{j-1}\dm_lk)\\
	&=|\det(\dm_0)|^{\frac{j-1}{2}}|\det(\dm_l)|^{\frac{1}{2}}\sum_{m\in\dZ}b_{l,1}(m)b_{0,j-1}(\cdot-\dm_0^{j-1}\dm_lk-\dm_0^{j-1}m)\\
	&=|\det(\dm_0)|^{\frac{j-1}{2}}|\det(\dm_l)|^{\frac{1}{2}}\sum_{m\in\dZ}b_{l,1}(m-\dm_lk)b_{0,j-1}(\cdot-\dm_0^{j-1}m)\\
	&=\sum_{m\in\dZ}b_{l,1;k}(m)b_{0,j-1;m}.
\end{aligned}\ee
Thus for all $v\in\dlrs{2}{1}{r}$, $k\in\dZ$, $l=0,\dots,s$ and $j\in\N$, we have
\be\label{vbljk}\la v,b_{l,j;k}\ra_{\dlp{2}}=\sum_{m\in\dZ}\la v,b_{0,j-1;m}\ra_{\dlp{2}}\ol{b_{l,1;k}(m)}^{\tp}=\la B_{0,j-1;v},b_{l,1;k}\ra_{\dlp{2}},\ee
where
\be\label{casc:2}B_{0,j-1;v}(m)=\la v,b_{0,j-1;m}\ra_{\dlp{2}},\qquad m\in\dZ.\ee
It follows from \eqref{bljk} and\eqref{vbljk} that
\be\label{casc:4}\begin{aligned}
	\sum_{l=0}^s\sum_{k\in\dZ}\la v,b_{l,j;k}\ra_{\dlp{2}}\tilde{b}_{l,j;k}=&\sum_{l=0}^s\sum_{k\in\dZ}\la B_{0,j-1;v},b_{l,1;k}\ra_{\dlp{2}}\sum_{m\in\dZ}\tilde{b}_{l,1;k}(m)\tilde{b}_{0,j-1;m}\\
	=&\sum_{m\in\dZ}\left(\sum_{l=0}^s\sum_{k\in\dZ}\la B_{0,j-1;v},b_{l,1;k}\ra_{\dlp{2}}\tilde{b}_{l,1;k}(m)\right)\tilde{b}_{0,j-1;m}\\
	=&\sum_{m\in\dZ}B_{0,j-1;v}\tilde{b}_{0,j-1;m}\\
	=&\sum_{m\in\dZ}\la v,b_{0,j-1;m}\ra_{\dlp{2}}\tilde{b}_{0,j-1;m}.
\end{aligned}\ee
This proves (iv).\\

(ii)$\Leftrightarrow$(iii): (iii) trivially implies (ii). Conversely, (ii) and (iv) are equivalent and together imply (iii).\\

If we further assume that both $\{b_l!\dm_l\}_{l=0}^s$ and $\{\tilde{b}_l!\dm_l\}_{l=0}^s$ have stability in $\dlp{2}$, then by Lemma~\ref{stadas}, each of items (i)-(iv) is equivalent to (v).

\ep

Similarly, we have the following characterization of biorthogonal filter banks using discrete affine systems.

\begin{theorem}\label{wvdas}Let $b_0,\tilde{b}_0\in\dlrs{0}{r}{r}$ and $b_1,\dots,b_s,\tilde{b}_1,\dots,\tilde{b}_s\in\dlrs{0}{1}{r}$ be finitely supported filters and let $\dm_0,\dots,\dm_s$ be $d\times d$ dilation matrices. The following statements are equivalent.
	\begin{enumerate}
		\item[(i)] $(\{b_l!\dm_l\}_{l=0}^s,\{\tilde{b}_l!\dm_l\}_{l=0}^s)$ is a biorthogonal wavelet filter bank with mixed dilation factors.
		
		\item[(ii)] For every $J\in\N$, \er{frdecomp} holds for all $v\in\dlrs{2}{1}{r}$, and moreover,\\* $(\DAS_J(\{b_l!\dm_l\}_{l=0}^s),\DAS_J(\{\tilde{b}_l!\dm_l\}_{l=0}^s))$ is a pair of biorthogonal systems in $\dlp{2}$ which satisfies the following biorthogonality relations:
		\be\label{das:ortho:1}\la \tilde{b}_{0,J;k'},b_{0,J;k}\ra=\td(k-k')I_r,\qquad \la \tilde{b}_{l,j;k},b_{0,J;k'}\ra=0 \qquad l=1,\dots,s,\quad j=1,\dots,J, \quad k,k'\in\dZ,\ee
		\be\label{das:ortho:2}	\la \tilde{b}_{l',j';k'},b_{l,j;k}\ra=\td(l-l')\td(j-j')\td(k-k'),\qquad l,l'=1,\dots,s,\quad j,j'=1,\dots,J, \quad k,k'\in\dZ.\ee
		\\

		***************************************************************\\
		\item[] If we further assume that both $\{b_l!\dm_l\}_{l=0}^s$ and $\{\tilde{b}_l!\dm_l\}_{l=0}^s$ have stability in $\dlp{2}$, then each of the above statements is equivalent to the following statement:
		
		\item[(iii)]\er{frdecomp} holds and there exist $C_1,C_2>0$ such that \er{das:frm} and \er{t:das:frm} hold for all $v\in\dlp{2}$ and $J\in\N$. Moreover,  $(\DAS_J(\{b_l!\dm_l\}_{l=0}^s),\DAS_J(\{\tilde{b}_l!\dm_l\}_{l=0}^s))$ is a pair of biorthogonal systems which satisfies \er{das:ortho:1} and \er{das:ortho:2}. 
		
	\end{enumerate}
\end{theorem}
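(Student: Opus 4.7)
My plan is to establish the chain $(i)\Rightarrow(ii)\Rightarrow(i)$, and then derive $(ii)\Leftrightarrow(iii)$ under the stability hypothesis directly from Lemma~\ref{stadas}. The unifying observation is that the two parts of Lemma~\ref{key1} translate the multi-level operator identities $\cW_J\tilde{\cV}_J=\ID$ and $\tilde{\cV}_J\cW_J=\ID$ into statements about inner products among elements of the discrete affine systems, so the whole theorem is a dictionary between operator equalities and biorthogonality relations.

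For $(i)\Rightarrow(ii)$: Since a biorthogonal wavelet filter bank is in particular a dual framelet filter bank, Lemma~\ref{fraff} yields \eqref{frdecomp}. To extract the biorthogonality relations, I first promote the single-level identity $\cW\tilde{\cV}=\ID_{(\dsq)^{1\times(s+r)}}$ from Proposition~\ref{wvdef}(iii) to its multi-level counterpart $\cW_J\tilde{\cV}_J=\ID_{(\dsq)^{1\times(sJ+r)}}$ by a short induction on $J$ based on the composition formulas \eqref{cW:dft}--\eqref{cV:dft}. I then probe this identity with two families of delta inputs. Fix $l_0\in\{1,\dots,s\}$, $j_0\in\{1,\dots,J\}$, and $k_0\in\dZ$; setting $w_{l_0,j_0}=\td_{k_0}$ and all other entries to zero, Lemma~\ref{key1}(ii) gives $\tilde{\cV}_Jw=\tilde{b}_{l_0,j_0;k_0}$, and Lemma~\ref{key1}(i) identifies the entries of $\cW_J\tilde{b}_{l_0,j_0;k_0}$ with the inner products $\la\tilde{b}_{l_0,j_0;k_0},b_{l,j;k}\ra$ and $\la\tilde{b}_{l_0,j_0;k_0},b_{0,J;k}\ra$; equating to $w$ produces \eqref{das:ortho:2} and the second relation of \eqref{das:ortho:1}. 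The first relation of \eqref{das:ortho:1} is obtained by taking $w_{0,J}(k)=\mathbf{u}\td_{k_0}(k)$ for arbitrary $\mathbf{u}\in\C^{1\times r}$, computing $\tilde{\cV}_Jw=\mathbf{u}\,\tilde{b}_{0,J;k_0}$, and reading off $\mathbf{u}\la\tilde{b}_{0,J;k_0},b_{0,J;k}\ra=\td(k-k_0)\mathbf{u}$ for every $\mathbf{u}$, which forces $\la\tilde{b}_{0,J;k_0},b_{0,J;k}\ra=\td(k-k_0)I_r$.

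For $(ii)\Rightarrow(i)$: The expansion \eqref{frdecomp} together with Lemma~\ref{fraff} shows that $(\{b_l!\dm_l\}_{l=0}^s,\{\tilde{b}_l!\dm_l\}_{l=0}^s)$ is a dual framelet filter bank. By Proposition~\ref{wvdef}(iii), it then suffices to establish $\cW\tilde{\cV}=\ID_{(\dsq)^{1\times(s+r)}}$, and the locality argument invoked in the proof of Theorem~\ref{pr:mix} reduces this to verifying $\cW\tilde{\cV}w=w$ for every finitely supported $w\in\dlrs{0}{1}{s+r}$. For such $w$, Lemma~\ref{key1}(ii) with $J=1$ writes $\tilde{\cV}w$ as an explicit linear combination of elements of $\DAS_1(\{\tilde{b}_l!\dm_l\}_{l=0}^s)$; applying Lemma~\ref{key1}(i) then expresses each component of $\cW\tilde{\cV}w$ as double sums of inner products $\la\tilde{b}_{l',1;k'},b_{l,1;k}\ra$ weighted by the entries of $w$, and the biorthogonality relations \eqref{das:ortho:1}--\eqref{das:ortho:2} at level $J=1$ collapse these sums back to $w$. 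Finally, under the stability hypothesis, Lemma~\ref{stadas} translates stability of each filter bank into the frame bounds \eqref{das:frm} and \eqref{t:das:frm}, so the equivalence $(ii)\Leftrightarrow(iii)$ is immediate.

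The main obstacle will be the careful bookkeeping of matrix versus row-vector shapes: the low-pass piece $b_{0,J}$ is $r\times r$-matrix-valued while each $b_{l,j}$ with $l\ge 1$ is $1\times r$-valued, so the three relations in \eqref{das:ortho:1}--\eqref{das:ortho:2} live respectively in $\C^{r\times r}$, $\C^{1\times r}$, and $\C$; they must be extracted by probing $\cW_J\tilde{\cV}_J=\ID$ with matrix-valued and scalar-valued delta inputs separately, and one has to be careful that the coefficient $\mathbf{u}$ in the second family of test inputs is genuinely arbitrary so as to conclude the full matrix identity $\la\tilde{b}_{0,J;k_0},b_{0,J;k}\ra=\td(k-k_0)I_r$. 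A secondary technical point, though routine, is the induction lifting $\cW\tilde{\cV}=\ID$ to $\cW_J\tilde{\cV}_J=\ID$, since Proposition~\ref{wvdef} is stated only in the single-level setting.
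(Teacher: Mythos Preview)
Your proposal is correct and follows essentially the same route as the paper. Both arguments hinge on Lemma~\ref{key1} to translate between the operator identity $\cW_J\tilde{\cV}_J=\ID$ and the biorthogonality relations \eqref{das:ortho:1}--\eqref{das:ortho:2}, invoke Lemma~\ref{fraff} for \eqref{frdecomp}, and use Lemma~\ref{stadas} for the equivalence with (iii); the paper phrases the $(i)\Rightarrow(ii)$ step via ``injectivity of $\tilde{\cV}_J$'' together with \eqref{dft:key1} and the $(ii)\Rightarrow(i)$ step via injectivity of $\tilde{\cV}_J$ deduced from \eqref{vjexp}, whereas you spell out the same mechanism by explicitly lifting $\cW\tilde{\cV}=\ID$ to level $J$ and probing with delta inputs, but the content is the same.
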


\bp (i)$\Leftrightarrow$(ii): Suppose that (i) holds. Then, in particular, $(\{b_l!\dm_l\}_{l=0}^s,\{\tilde{b}_l!\dm_l\}_{l=0}^s)$ is a dual framelet filter bank with mixed dilation factors. Thus \er{frdecomp} holds for all $J\in\N$ and $v\in\dlrs{2}{1}{r}$ by Lemma~\ref{fraff}. Moreover, we have $\tilde{\cV}_J\cW_J=\ID_{(\dsq)^{1\times r}}$ for all $J\in\N$. Now the biorthogonality relations \er{das:ortho:1} and \er{das:ortho:2} follow straight away from \er{dft:key1} and the injectivity of $\tilde{\cV}_J$.\\

Conversely, suppose that (ii) holds, then \er{frdecomp} implies that  $(\{b_l!\dm_l\}_{l=0}^s,\{\tilde{b}_l!\dm_l\}_{l=0}^s)$ is a dual framelet filter bank with mixed dilation factors. Moreover the injectivity of $\cV_J$ follows from the biorthogonality relations \er{das:ortho:1} and \er{das:ortho:2} and the fact that
\be\label{vjexp}\tilde{\cV}_Jw=\sum_{k\in\dZ}w_{0,J}(k)\tilde{b}_{0,J;k}+\sum_{j=1}^J\sum_{l=1}^s\sum_{k\in\dZ}w_{l,j}(k)\tilde{b}_{l,j;k}\ee
holds for all $w=(w_{1,1},\dots,w_{1,J},\dots,w_{s,1},\dots,w_{s,J},w_{0,J})\in(\dsq)^{1\times(sJ+r)}$. This proves item (i).\\

Finally, by further assuming that both $\{b_l!\dm_l\}_{l=0}^s$ and $\{\tilde{b}_l!\dm_l\}_{l=0}^s$ have stability in $\dlp{2}$, it follows from Lemma~\ref{stadas} that each of items (i) and (ii) is equivalent to item (iii).

\ep

\section{Framelets and Wavelets in $\dLp{2}$ with Mixed dilation Factors}
\label{sec:fr:l2}

In this section, we discuss the connections between framelet filter banks and framelets in $\dLp{2}$. \\

\subsection{A Brief Review of Refinable Vector Functions}
First, we briefly review several definitions and results related to refinable functions obtained from finitely supported filters. The following result is well-known (see, e.g., \cite[Theorem 5.1.2]{hanbook}).
\begin{lemma}\label{ref0}Let $b_0\in\dlrs{0}{r}{r}$ be a finitely supported filter and let $\dm_0$ be a $d\times d$ dilation matrix. Suppose there exist $C_0,C_b,\tau>0$ and $u\in\dlrs{0}{r}{1}$ such that
	\be\label{mat:norm:a}\log_2\left(\vertiii{\wh{b_0}(0)}+C_0\right)\le\tau,\qquad  \|\wh{b_0}(\xi)\wh{u}(\xi)-\wh{u}(\dm_0^{\tp}\xi)\|\le C_b\|\xi\|^\tau,\qquad\xi\in[-\pi,\pi]^d,\ee
	where $\vertiii{\cdot}$ is some fixed matrix norm. Then
	\be\label{ref}\varphi(\xi):=\lim_{n\to\infty}\left(\prod_{j=1}^n\wh{a}((\dm_0^{-\tp})^j\xi)\right)\wh{u}((\dm_0^{-\tp})^n\xi),\qquad\xi\in\dR\ee
	is a well-defined bounded  measurable function which satisfies
	\be\label{limphi}\varphi(\xi)=\wh{u}(\xi)+\bo(\|\xi\|^\tau),\qquad \xi\to 0.\ee
	Moreover, there exists a compactly supported vector distribution $\psi^0$ such that $\wh{\psi^0}=\varphi$, and satisfies the following refinement equation:
	\be\label{ref:t}\psi^0(\cdot)=\sum_{k\in\dZ}b_0(k)\psi^0(\dm_0\cdot-k),\ee
	with the above series converging in the sense of tempered distributions.
\end{lemma}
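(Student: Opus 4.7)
The plan is to establish convergence of the infinite product defining $\varphi$ by a telescoping argument, and then invoke the Paley--Wiener--Schwartz theorem to produce $\psi^0$. First I would introduce the partial products $\varphi_n(\xi) := P_n(\xi)\wh{u}((\dm_0^{-\tp})^n\xi)$, with $P_n(\xi) := \prod_{j=1}^n \wh{b_0}((\dm_0^{-\tp})^j\xi)$ and the convention $P_0 := I_r$, so that $\varphi_0 = \wh{u}$. The central identity is the telescoping factorization
$$\varphi_n(\xi) - \varphi_{n-1}(\xi) = P_{n-1}(\xi)\bigl[\wh{b_0}((\dm_0^{-\tp})^n\xi)\wh{u}((\dm_0^{-\tp})^n\xi) - \wh{u}((\dm_0^{-\tp})^{n-1}\xi)\bigr],$$
in which the bracket is exactly the expression controlled by the second hypothesis, provided $(\dm_0^{-\tp})^n\xi \in [-\pi,\pi]^d$; this is automatic once $n$ exceeds a threshold depending only on $\xi$.

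Next I would estimate the two factors separately. Because $\dm_0$ is expansive, in an adapted matrix norm one has $\|(\dm_0^{-\tp})^n \xi\| \le \rho^n\|\xi\|$ for some $\rho\in(0,1)$, so the second hypothesis yields $\|\text{bracket}\| \le C_b\rho^{n\tau}\|\xi\|^\tau$. For the product, Lipschitz continuity of $\wh{b_0}$ combined with summability of $\sum_j \rho^j$ produces $\vertiii{P_{n-1}(\xi)} \le C(\vertiii{\wh{b_0}(0)}+\eps)^{n-1}$ for any $\eps>0$. The first hypothesis caps $\vertiii{\wh{b_0}(0)}$ by $2^\tau - C_0$, and a suitable choice of $\eps$ and adapted $\rho$ then produces a geometric bound $\|\varphi_n - \varphi_{n-1}\| \le C'\|\xi\|^\tau r^n$ with $r<1$. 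Summing over $n$ delivers pointwise and locally uniform convergence to a limit $\varphi$, bounded measurability, and the asymptotic $\varphi(\xi) = \wh{u}(\xi) + \bo(\|\xi\|^\tau)$ near zero by telescoping from $\varphi_0 = \wh{u}$.

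For the distribution $\psi^0$, I would run the same estimates on $\C^d$: each $\varphi_n$ is entire (since $\wh{b_0}$ is a trigonometric polynomial and $\wh{u}$ a finite exponential sum), and the convergence is locally uniform on complex compact sets, hence $\varphi$ is entire. A Paley--Wiener--Schwartz bound $|\varphi(\xi)| \le C(1+\|\xi\|)^N e^{h(\im \xi)}$ then drops out, the exponential type $h$ being bounded by a geometric series in $\|\dm_0^{-j}\|$ involving the supports of $b_0$ and $u$. The Paley--Wiener--Schwartz theorem delivers a compactly supported vector distribution $\psi^0$ with $\wh{\psi^0} = \varphi$, and passing to the limit in the elementary identity $\varphi_{n+1}(\dm_0^\tp\xi) = \wh{b_0}(\xi)\varphi_n(\xi)$ gives the Fourier-side refinement $\wh{\psi^0}(\dm_0^\tp\xi) = \wh{b_0}(\xi)\wh{\psi^0}(\xi)$, equivalent to \er{ref:t}. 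The main obstacle is the delicate balance between the geometric growth $\vertiii{\wh{b_0}(0)}^{n-1}$ of the product and the geometric decay $\rho^{n\tau}$ from the iterated contraction: the first hypothesis is precisely the calibration that makes $\vertiii{\wh{b_0}(0)}^{n-1}\rho^{n\tau}$ summable, and one must check that the adapted norm is compatible with the cube $[-\pi,\pi]^d$ in which the second hypothesis is stated.
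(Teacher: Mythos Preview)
The paper does not supply a proof of this lemma at all; it is quoted as a known result with a pointer to \cite[Theorem~5.1.2]{hanbook}. Your telescoping argument with the partial products $\varphi_n$, the geometric control of $P_{n-1}$ against the $\rho^{n\tau}$ decay of the bracketed difference, and the Paley--Wiener--Schwartz step are precisely the standard ingredients used in that reference, so there is nothing to compare: you have reconstructed the expected proof.

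One caution on the details: the hypothesis is written with a $\log_2$, which is calibrated to dilations whose smallest eigenvalue has modulus at least $2$ (so that one may take $\rho\le 1/2$ and obtain $\rho^{-\tau}\ge 2^{\tau}>\vertiii{\wh{b_0}(0)}$). For a general dilation matrix $\dm_0$ the correct threshold replaces $2$ by the minimum modulus of the eigenvalues of $\dm_0$; as stated, the inequality $\log_2(\vertiii{\wh{b_0}(0)}+C_0)\le\tau$ alone does not guarantee that $(\vertiii{\wh{b_0}(0)}+\eps)\rho^{\tau}<1$ for an admissible $\rho$. You flagged this balance as ``the main obstacle,'' which is exactly right; just be aware that the lemma as written in the paper carries this implicit restriction (or typo), and the cited source handles the general case with the appropriate base in place of $2$.
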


\begin{definition}For any finitely supported filter $b_0\in\dlrs{0}{r}{r}$  satisfying \er{mat:norm:a} for some $C_0,C_b,\tau>0$ and $u\in\dlrs{0}{r}{1}$, the function $\phi$ which is defined via \er{ref} is called \textbf{a standard $\dm_0$-refinable function} associated to the filter $a$.
\end{definition}

The following proposition is an important result for our later study on framelets and wavelets in $\dLp{2}$.

\begin{prop}\label{refl2}Let $b_0\in\dlrs{0}{r}{r},b_1,\dots,b_s\in\dlrs{0}{1}{r}$ be finitely supported filters and let $\dm_0,\dots,\dm_s$ be $d\times d$ dilation matrices. Suppose there exist $C_0,C_b,\tau>0$ and $u\in\dlrs{0}{r}{1}$ such that \er{mat:norm:a} holds. Define a compactly supported standard refinable vector function $\psi^0$ via \er{ref} and define $\psi^1,\dots,\psi^s$ via
	\be\label{reff}\wh{\psi^l}(\xi)=\wh{b_l}(\dm_0^{-\tp}\xi)\wh{\psi^0}(\dm_0^{-\tp}\xi),\qquad\xi\in\dR, \quad l=1,\dots,s.
	\ee
	If $\{b_l!\dm_l\}_{l=0}^s$ has stability in $\dlp{2}$, then there exists $C>0$ such that 
	\be\label{phi:psi:bes}\sum_{k\in\dZ}\wh{\psi^0}(\xi+2\pi k)\ol{\wh{\psi^0}(\xi+2\pi k)}^{\tp}\le CI_r,\qquad \sum_{l=1}^s\sum_{k\in\dZ}\left|\wh{\psi^l}((\dm_l^{-\tp}\dm_0^{\tp})(\xi+2\pi k))\right|^2\le C\ee 
	for a.e. $\xi\in\dR$. 
\end{prop}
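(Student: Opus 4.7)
The two bounds in \eqref{phi:psi:bes} are Bessel bounds for the shift-invariant systems generated by $\psi^0$ on the lattice $\dZ$ and by $\{\psi^l\}_{l\ge 1}$ on the lattices $\mathsf{M}_0^{-1}\mathsf{M}_l\dZ$, respectively. The plan is to derive them from the uniform discrete Bessel inequalities supplied by the stability hypothesis via Lemma~\ref{stadas}, transferred to the continuum through Parseval and the refinement identities. By Lemma~\ref{stadas}, the stability yields a constant $C>0$ such that, for every $J\in\N$ and every $v\in\dlrs{2}{1}{r}$,
\[
\sum_{k\in\dZ}\|\la v, b_{0,J;k}\ra\|^2 \le C\|v\|^2, \qquad \sum_{l=1}^s\sum_{k\in\dZ}|\la v, b_{l,1;k}\ra|^2 \le C\|v\|^2.
\]
Applying Parseval together with \eqref{tz:ft} and choosing test sequences $v$ with narrowly supported Fourier series (so that distinct aliasing terms indexed by $\omega\in\Omega_{\mathsf{M}_0^J}$ or $\omega\in\Omega_{\mathsf{M}_l}$ do not overlap) converts each inequality, upon shrinking the support, into pointwise Fourier-side bounds on $\wh{b_{0,J}}$ (uniform in $J$) and on $\sum_{l=1}^s|\wh{b_l}|^2$.

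The first inequality in \eqref{phi:psi:bes} then follows from an iterated use of the refinement relation. Iterating $\wh{\psi^0}(\mathsf{M}_0^\tp\eta)=\wh{b_0}(\eta)\wh{\psi^0}(\eta)$ gives $\wh{\psi^0}(\mathsf{M}_0^{J\tp}\eta)=\wh{b_{0,J}}(\eta)\wh{\psi^0}(\eta)$; combined with the partition $\dZ=\bigsqcup_{\omega\in\Omega_{\mathsf{M}_0^J}}(\mathsf{M}_0^{J\tp}\omega+\mathsf{M}_0^{J\tp}\dZ)$ in the definition $\mathcal{B}(\xi):=\sum_{k\in\dZ}\wh{\psi^0}(\xi+2\pi k)\overline{\wh{\psi^0}(\xi+2\pi k)}^\tp$, this produces the self-similar identity
\[
\mathcal{B}(\mathsf{M}_0^{J\tp}\eta)=\sum_{\omega\in\Omega_{\mathsf{M}_0^J}}\wh{b_{0,J}}(\eta+2\pi\omega)\,\mathcal{B}(\eta+2\pi\omega)\,\overline{\wh{b_{0,J}}(\eta+2\pi\omega)}^\tp.
\]
Since $\psi^0$ is compactly supported and $\wh{\psi^0}$ is continuous at the origin by \eqref{limphi}, $\mathcal{B}$ is essentially bounded in some neighbourhood of $0$. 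For any fixed $\xi\in\dR$, setting $\eta:=\mathsf{M}_0^{-J\tp}\xi$ drives $\eta\to 0$ as $J\to\infty$, and a limiting argument combining local boundedness of $\mathcal{B}$ near $0$ with the uniform matrix control on $\wh{b_{0,J}}$ from the previous step propagates the bound $\mathcal{B}(\xi)\le CI_r$ to a.e.\ $\xi$.

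For the second inequality, substituting $\wh{\psi^l}(\xi)=\wh{b_l}(\mathsf{M}_0^{-\tp}\xi)\wh{\psi^0}(\mathsf{M}_0^{-\tp}\xi)$ and using a coset decomposition analogous to the one above (now with respect to $\mathsf{M}_l^{-\tp}\dZ/\dZ\cong\Omega_{\mathsf{M}_l}$) expresses $\sum_k|\wh{\psi^l}(\mathsf{M}_l^{-\tp}\mathsf{M}_0^\tp(\xi+2\pi k))|^2$ as a sum of $|\Omega_{\mathsf{M}_l}|$ terms, each involving a product of $\wh{b_l}$ with the bracket product $\mathcal{B}$ evaluated at a shifted argument. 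Bounding $\mathcal{B}$ by the first inequality and then summing over $l$ using the Fourier-side bound on $\sum_l|\wh{b_l}|^2$ from the setup step yields the second inequality. The main obstacle is the limiting argument behind the first inequality: one must show that the self-similar identity actually propagates the local boundedness of $\mathcal{B}$ near $0$ uniformly to all of $\dR$, which is delicate because the iterated mask $\wh{b_{0,J}}$ may oscillate wildly as $J\to\infty$, and only aggregate (not individual) control on $\wh{b_{0,J}}$ is available from Step~1.
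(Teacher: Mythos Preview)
Your proposal has a genuine gap exactly at the place you flag as ``the main obstacle,'' and the paper's proof avoids it by a different, more direct route.

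The circularity in your Step~3 is real. The self-similar identity
\[
\mathcal{B}(\dm_0^{J\tp}\eta)=\sum_{\omega\in\Omega_{\dm_0^J}}\wh{b_{0,J}}(\eta+2\pi\omega)\,\mathcal{B}(\eta+2\pi\omega)\,\ol{\wh{b_{0,J}}(\eta+2\pi\omega)}^{\tp}
\]
involves $\mathcal{B}$ at \emph{all} the points $\eta+2\pi\omega$, $\omega\in\Omega_{\dm_0^J}$, not just near the origin. For fixed $\xi$ and $\eta=\dm_0^{-J\tp}\xi$, only the $\omega=0$ term lies near $0$; the remaining $|\det(\dm_0)|^J-1$ points fill out the torus. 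So local boundedness of $\mathcal{B}$ near $0$ controls only one term. Even if Step~1 gives you the aggregate bound $\sum_{\omega}\ol{\wh{b_{0,J}}(\eta+2\pi\omega)}^{\tp}\wh{b_{0,J}}(\eta+2\pi\omega)\le C$, converting this into $\mathcal{B}(\dm_0^{J\tp}\eta)\le C'I_r$ requires a \emph{uniform} a~priori bound on $\mathcal{B}(\eta+2\pi\omega)$ for all $\omega$, which is precisely the conclusion you seek. No limiting argument can break this circle without additional input.

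The paper sidesteps this entirely. It works on the \emph{synthesis} side: stability gives $\|\cV_J\|\le K^{1/2}$, hence for $w=(0,\dots,0,v)$ one has $\cV_Jw=\sd_{b_{0,J},\dm_0^J}v$ with $\|\sd_{b_{0,J},\dm_0^J}v\|^2\le K\|v\|^2$. Composing once more with $\sd_{u,\dm_0}$ and applying Parseval yields
\[
\int_{(\dm_0^{\tp})^{J+1}[0,2\pi)^d}\wh{v}(\xi)\,\wh{b_{0,J}}((\dm_0^{-\tp})^J\xi)\wh{u}((\dm_0^{-\tp})^{J+1}\xi)\,\ol{\cdots}^{\tp}\,\ol{\wh{v}(\xi)}^{\tp}\,d\xi \;\le\; K'\|v\|^2.
\]
Now the key point: by the very definition \er{ref} of $\psi^0$ as an infinite product, $\wh{b_{0,J}}((\dm_0^{-\tp})^J\xi)\wh{u}((\dm_0^{-\tp})^{J+1}\xi)\to\wh{\psi^0}(\xi)$ pointwise. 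Fatou's lemma then gives $\int_{\dR}\wh{v}(\xi)\wh{\psi^0}(\xi)\ol{\wh{\psi^0}(\xi)}^{\tp}\ol{\wh{v}(\xi)}^{\tp}d\xi\le K'\|v\|^2$ for all $v$, which after periodization is exactly the first inequality in \er{phi:psi:bes}. The second inequality is obtained by the same device with $w=(0,\dots,w_{l,J},\dots,0)$ and the product formula for $\wh{\psi^l}$. No recursion on $\mathcal{B}$, no pointwise control on $\wh{b_{0,J}}$, no delicate limit: the infinite-product definition of $\wh{\psi^0}$ does the work, and Fatou supplies the passage to the limit inside the uniformly bounded integral.
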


\bp   For $J\in\N$, let $\cV_J$ and $\cW_J$ be the $J$-level discrete framelet synthesis and analysis operators employing the filter bank $\{b_0!\dm_0,\dots,b_s!\dm_s\}$. By the stability of $\{b_l!\dm_l\}_{l=0}^s$ in $\dlp{2}$, there exists $K>0$ such that
\be\label{besselv}\|\cV_Jw\|_{\dlrs{2}{1}{r}}^2\le K\|w\|_{\dlrs{2}{1}{(sJ+r)}}^2,\qquad  w\in\dlrs{2}{1}{(sJ+r)},\quad J\in\N.\ee
Let $w=(0,\dots,0,v)\in \dlrs{2}{1}{(sJ+r)}$ with $v\in\dlrs{2}{1}{r}$. We have \be\label{sd:conv:b}\cV_Jw=\sd_{b_0,\dm_0}^Jv=\sd_{b_{0,J},\dm_0^J}v,\ee
where $b_{0,J}=(b_0\uparrow \dm_0^{J-1})*\dots*(b_0\uparrow \dm_0)*b_0$. Thus by Parseval's identity, we have
\be\label{sta:b}\begin{aligned}&\|\sd_{u,\dm_0}\cV_Jw\|_{\dlp{2}}^2\\
	=&(2\pi)^{-d}\int_{(\dm_0^{\tp})^J[0,2\pi)^d}\wh{v}(\xi)\wh{b_{0,J}}((\dm_0^{-\tp})^J\xi)\wh{u}((\dm_0^{-\tp})^J\xi)\ol{\wh{b_{0,J}}((\dm_0^{-\tp})^J\xi)\wh{u}((\dm_0^{-\tp})^J\xi)}^{\tp}\ol{\wh{v}(\xi)}^{\tp}d\xi.
\end{aligned}\ee
It now follows from Fatou's lemma that
\be\label{besselphi}\begin{aligned}&(2\pi)^{-d}\int_{\dR}\wh{v}(\xi)\wh{\psi^0}(\xi)\ol{\wh{\psi^0}(\xi)}^{\tp}\ol{\wh{v}(\xi)}^{\tp}d\xi\le\liminf_{J\to\infty}\|\sd_{u,\dm_0}\cV_Jw\|_{\dlp{2}}^2\\
	\le& \liminf_{J\to\infty}|\det(\dm_0)\|\|\cV_Jw\|_{\dlrs{2}{1}{(s+r)}}^2\|u\|_{\dlrs{1}{r}{1}}^2=K|\det(\dm_0)|\|u\|_{\dlrs{1}{r}{1}}^2\|v\|_{\dlrs{2}{1}{r}}^2
\end{aligned}\ee
for all $v\in\dlp{2}$. Note that 
\be\label{sta:b:2}\int_{\dR}\wh{v}(\xi)\wh{\psi^0}(\xi)\ol{\wh{\psi^0}(\xi)}^{\tp}\ol{\wh{v}(\xi)}^{\tp}d\xi=\int_{[0,2\pi)^{d}}\wh{v}(\xi)\left(\sum_{k\in\dZ}\wh{\psi^0}(\xi+2\pi k)\ol{\wh{\psi^0}(\xi+2\pi k)}^{\tp}\right)\ol{\wh{v}(\xi)}^{\tp}d\xi,\qquad v\in\dlrs{2}{1}{r}.\ee
It follows from \er{besselphi} and \er{sta:b:2} and the periodicity of $\sum_{k\in\dZ}\wh{\psi^0}(\xi+2\pi k)\ol{\wh{\psi^0}(\xi+2\pi k)}^{\tp}$ that 
\be\label{psi:0:br}\sum_{k\in\dZ}\wh{\psi^0}(\xi+2\pi k)\ol{\wh{\psi^0}(\xi+2\pi k)}^{\tp}\le K|\det(\dm_0)|\|u\|_{\dlrs{1}{r}{1}}^2I_r,\ee
for a.e. $\xi\in\dR$.\\

Similarly, choose $w=(0,\dots, w_{l,J},\dots,0)\in\dlrs{2}{1}{(sJ+r)}$ with $w_{l,J}\in\dlp{2}$ for some fixed $l\in\{1,\dots,s\}$, and using the fact that
\be\label{ref:psi:l}\wh{\psi^l}(\xi)=\lim_{J\to\infty}\wh{b_l}(\dm_0^{-\tp}\xi)\prod_{j=1}^{J-1}\wh{\psi^0}((\dm_0^{-\tp})^{j}\xi),\qquad l=1,\dots,s,\quad \xi\in\dR,\ee 
one can apply the above same arguments to conclude that there exists $K\tilde>0$ such that\\*  $\sum_{k\in\dZ}\left|\wh{\psi^l}((\dm_l^{-\tp}\dm_0^{\tp})(\xi+2\pi k))\right|^2\le \tilde{K}$ for all $l=1,\dots,s$ and a.e. $\xi\in\dR$. This completes the proof.

\ep

\subsection{Framlets and Wavelets in $\dLp{2}$ and Connections to Filter Banks}For a function (distribution) matrix $f:\dR\to\mathbb{C}^{s\times r}$ and a $d\times d$ invertible real matrix $U$, define
\be\label{f:dil}f_{U;k,n}(x):=|\det(U)|^{\frac{1}{2}}e^{-in\cdot Ux}f(Ux-k),\qquad  x,k,n\in\dR.\ee
In particular, define $f_{U;k}:=f_{U;k,0}$. It is straight forward to verify that
\be\label{f:ft:dil}\widehat{f_{U;k,n}}=\wh{f}_{U^{-\tp};n,k}.\ee

\begin{definition}Let $f:\dR\to\mathbb{C}^{s\times r}$ and $g:\dR\to\mathbb{C}^{t\times r}$ are matrices of measurable functions, and $U$ is a $d\times d$ invertible real matrix, define the \textbf{$U$-bracket product} of $f$ and $g$ via:
	\be\label{br:prod:u}[f,g]_{U}(x):=\sum_{k\in\dZ}f(x+2\pi U^{-1}k)\ol{g(x+2\pi U^{-1}k)}^{\tp},\ee
	whenever the series converges absolutely for a.e. $x\in\dR$. Denote $[f,g]:=[f,g]_{I_d}$. Note that $[f,g]_U$ is an $s\times t$ matrix of $2\pi U^{-1}\dZ$-periodic functions.
\end{definition}

We discuss some important properties of the bracket product.

\begin{lemma}\label{brfr}Let $f\in\dLrs{2}{s}{r},g\in\dLrs{2}{t}{r}$ and let $U$ be a $d\times d$ invertible real matrix. We have $[\wh{f},\wh{g}]_U\in (L_1(U^{-1}\dT))^{s\times t}$ and its Fourier series is
	\be\label{fs:dil}|\det(U)|\sum_{k\in\dZ}\la f,g(\cdot+U^{\tp}k)\ra_{\dLp{2}}e^{-ik\cdot Ux}.\ee
\end{lemma}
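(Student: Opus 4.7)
The plan is to prove the two assertions of Lemma~\ref{brfr} in sequence: first the integrability $[\wh{f},\wh{g}]_U\in(L_1(U^{-1}\dT))^{s\times t}$, and then the identification of the Fourier coefficients.

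First I would fix a concrete fundamental domain $\mathcal{F}:=U^{-1}([0,2\pi)^d)$ for the lattice $2\pi U^{-1}\dZ$, whose volume is $(2\pi)^d/|\det U|$. Working entrywise, for each $(i,j)$ I would estimate
\[
\int_{\mathcal{F}}\bigl|\bigl([\wh{f},\wh{g}]_U(x)\bigr)_{ij}\bigr|\,dx\le\int_{\mathcal{F}}\sum_{k\in\dZ}|\wh{f}_i(x+2\pi U^{-1}k)|\,|\wh{g}_j(x+2\pi U^{-1}k)|\,dx
\]
and apply Tonelli's theorem to unfold the right-hand side into $\int_{\dR}|\wh{f}_i(y)|\,|\wh{g}_j(y)|\,dy$. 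Cauchy--Schwarz together with $\wh{f}_i,\wh{g}_j\in\dLp{2}$ (via Plancherel) gives the desired finiteness, and as a byproduct the defining series of $[\wh{f},\wh{g}]_U$ converges absolutely a.e.\ and the bracket product lies in $(L_1(U^{-1}\dT))^{s\times t}$.

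Second, I would compute the Fourier coefficients of the $2\pi U^{-1}\dZ$-periodic matrix-valued $L_1$ function $[\wh{f},\wh{g}]_U$. The dual lattice is $U^{\tp}\dZ$, so the coefficient of $e^{-ik\cdot Ux}$ in the Fourier expansion on $\mathcal{F}$ is
\[
c_k:=\frac{|\det U|}{(2\pi)^d}\int_{\mathcal{F}}[\wh{f},\wh{g}]_U(x)\,e^{ik\cdot Ux}\,dx.
\]
Because $e^{ik\cdot U(x+2\pi U^{-1}m)}=e^{ik\cdot Ux}$ for every $m\in\dZ$, the Tonelli majorant used in Step~1 lets me unfold this integral to all of $\dR$:
\[
c_k=\frac{|\det U|}{(2\pi)^d}\int_{\dR}\wh{f}(x)\ol{\wh{g}(x)}^{\tp}e^{ik\cdot Ux}\,dx.
\]
Using the translation rule $\wh{g(\cdot+U^{\tp}k)}(x)=e^{ik\cdot Ux}\wh{g}(x)$, I would absorb the exponential factor into the conjugate transpose and apply the matrix-valued Parseval identity componentwise to obtain $c_k=|\det U|\la f,g(\cdot+U^{\tp}k)\ra_{\dLp{2}}$, matching \er{fs:dil}.

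The only technical step requiring care is the interchange of sum and integral in Step~2, but this is immediate from the $L_1$ dominating majorant produced in Step~1. The matrix-valued nature of the problem introduces no new difficulty, because each entry reduces to the scalar identity $(2\pi)^{-d}\int_{\dR}\wh{f_i}(x)\ol{\wh{g_j}(x)}\,e^{ik\cdot Ux}\,dx=\la f_i,g_j(\cdot+U^{\tp}k)\ra_{\dLp{2}}$ obtained from the standard Parseval formula together with the shift rule; assembling these entries yields the claimed matrix-valued Fourier series.
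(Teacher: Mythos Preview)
Your proposal is correct and follows essentially the same route as the paper: both establish the $L_1$ bound via Cauchy--Schwarz and then compute the Fourier coefficients by unfolding the integral over the fundamental domain $U^{-1}[0,2\pi)^d$ to all of $\dR$ and invoking Parseval. The only cosmetic difference is that the paper applies Cauchy--Schwarz pointwise to the bracket (bounding $|([\wh f,\wh g]_U)_{j,l}|$ by products of scalar brackets) and then integrates, whereas you unfold first via Tonelli and apply Cauchy--Schwarz to the resulting $\dR$-integral; both are equally valid. One small caution: when you ``absorb the exponential factor into the conjugate transpose'' the sign flips, so $\ol{\wh g(x)}^{\tp}e^{ik\cdot Ux}=\ol{\wh g(x)e^{-ik\cdot Ux}}^{\tp}=\ol{\wh{g(\cdot-U^{\tp}k)}(x)}^{\tp}$, and you should track this carefully to land on the coefficient matching \er{fs:dil} (the paper pairs with $e^{-ik\cdot U\xi}$ for this reason).
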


\bp For $f\in\dLrs{2}{s}{r},g\in\dLrs{2}{t}{r}$, we have 
\be\label{br:prod}\left|\left([\wh{f},\wh{g}]_U(\xi)\right)_{j,l}\right|\le\sum_{k=1}^r[\wh{f}_{s,k},\wh{f}_{s,k}]_U[\wh{g}_{k,l},\wh{g}_{k,l}]_U<\infty,\qquad\xi\in\dR.\ee
It follows from \er{br:prod} that
\be\label{br:prod:0}\int_{U^{-1}[0,2\pi)^d}\left|\left([\wh{f},\wh{g}]_U(\xi)\right)_{j,l}\right|d\xi\le\sum_{k=1}^r\|\wh{f}_{s,k}\|_{\dLp{2}}\|\wh{g}_{k,l}\|_{\dLp{2}}.\ee
Thus $[\wh{f},\wh{g}]_U\in (L_1(U^{-1}\dT))^{s\times t}$, and its $k$-th Fourier coefficient is given by
$$\begin{aligned}
&|\det(U)|(2\pi)^{-d}\int_{U^{-1}[0,2\pi)^d}\sum_{n\in\dZ}\wh{f}(\xi+2\pi U^{-1}n)\ol{\wh{g}(\xi+2\pi U^{-1}n)}^{\tp}e^{-ik\cdot U\xi}d\xi\\
=&|\det(U)|(2\pi)^{-d}\int_{\dR}\wh{f}(\xi)\ol{\wh{g}(\xi)e^{ik\cdot U\xi}}^{\tp}d \xi\\
=&|\det(U)|(2\pi)^{-d}\langle \wh{f},\wh{g}_{I_d;0,-U^{\tp}k}\ra_{\dLp{2}}\\
=&|\det(U)|\la f,g(\cdot+U^{\tp}k)\ra_{\dLp{2}}.
\end{aligned}$$

\ep

The following lemma is a generalization of \cite[Lemma 4.1.1]{hanbook} to high dimensions and arbitrary dilation factors. The result can be proved by following the lines of the proof of \cite[Lemma 4.1.1]{hanbook}.

\begin{lemma}\label{br}Let $U$ be an invertible $d\times d$ real matric, and $f,g:\dR\to\C,h,\tilde{h}:\dR\to\C^{r}$ be such that
	\be\label{f:br}\int_{\dR}\sum_{k\in\dZ}\|f(x)\ol{h(Ux)}^{\tp}\|\|f(x+2\pi U^{-1}k)\ol{h(Ux+2\pi k)}^{\tp}\|dx<\infty\ee
	and
	\be\label{g:br}\int_{\dR}\sum_{k\in\dZ}\|g(x)\ol{\tilde{h}(Ux)}^{\tp}\|\|g(x+2\pi U^{-1}k)\ol{\tilde{h}(Ux+2\pi k)}^{\tp}\|dx<\infty.\ee
	Then
	\be\label{fg:br}\sum_{k\in\dZ}\la f,h_{U;0,k}\ra_{\dLp{2}}\langle \tilde{h}_{U;0,k},g\ra_{\dLp{2}}=(2\pi)^d\int_{\dR}\sum_{k\in\dZ}f(x)\ol{h(Ux)}^{\tp}\tilde{h}(Ux+2\pi k)\ol{g(x+2\pi U^{-1}k)}dx.\ee
\end{lemma}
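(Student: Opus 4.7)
The plan is to reduce the infinite sum on the left-hand side to a single integral over the fundamental domain $U^{-1}[0,2\pi)^d$ via the Parseval identity for Fourier series on the $2\pi U^{-1}\dZ$-quotient torus, and then unfold that integral back to $\dR$ using the periodicity of one of the factors. First I would compute the inner products explicitly from the definition \eqref{f:dil}:
\begin{equation*}
\la f,h_{U;0,k}\ra_{\dLp{2}}=|\det(U)|^{1/2}\int_{\dR}f(x)\ol{h(Ux)}^{\tp}e^{ik\cdot Ux}\,dx,\qquad
\la \tilde h_{U;0,k},g\ra_{\dLp{2}}=|\det(U)|^{1/2}\int_{\dR}\tilde h(Uy)\ol{g(y)}e^{-ik\cdot Uy}\,dy.
\end{equation*}
Partition $\dR$ into translates of $U^{-1}[0,2\pi)^d$ by $2\pi U^{-1}\dZ$; since $e^{2\pi ik\cdot m}=1$ for $k,m\in\dZ$, both integrals fold into integrals over $U^{-1}[0,2\pi)^d$ of the periodizations
\begin{equation*}
P(\xi):=\sum_{m\in\dZ}f(\xi+2\pi U^{-1}m)\ol{h(U\xi+2\pi m)}^{\tp},\qquad
Q(\xi):=\sum_{n\in\dZ}\tilde h(U\xi+2\pi n)\ol{g(\xi+2\pi U^{-1}n)}.
\end{equation*}
The absolute-convergence hypotheses \eqref{f:br}--\eqref{g:br} together with Fubini put $P$ into $(L_1(U^{-1}\T^d))^{1\times r}$ and $Q$ into $(L_1(U^{-1}\T^d))^{r\times 1}$, and identify $\la f,h_{U;0,k}\ra$ with $|\det(U)|^{1/2}$ times the $(-k)$-th (unnormalized) Fourier coefficient of $P$ on $U^{-1}\T^d$ and $\la \tilde h_{U;0,k},g\ra$ with $|\det(U)|^{1/2}$ times the $k$-th coefficient of $Q$.

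Next I would form the scalar product $PQ\in L_1(U^{-1}\T^d)$ (the product is scalar since $P$ is a $1\times r$ row and $Q$ is an $r\times 1$ column, and the summability gives $PQ\in L_1$ by another Fubini/Cauchy--Schwarz argument based on \eqref{f:br}--\eqref{g:br}) and invoke the standard Fourier-series identity $\sum_{k\in\dZ}c_{-k}(P)c_k(Q)=c_0(PQ)$ on the torus $U^{-1}\T^d$. Tracking the normalization constants $|\det(U)|(2\pi)^{-d}$ that appear in the Fourier coefficients, this produces
\begin{equation*}
\sum_{k\in\dZ}\la f,h_{U;0,k}\ra_{\dLp{2}}\la\tilde h_{U;0,k},g\ra_{\dLp{2}}=(2\pi)^{d}\int_{U^{-1}[0,2\pi)^d}P(\xi)Q(\xi)\,d\xi.
\end{equation*}
Finally, because $Q$ is $2\pi U^{-1}\dZ$-periodic, substituting $x=\xi+2\pi U^{-1}m$ and absorbing the sum over $m$ in the definition of $P$ unfolds the integral back to $\dR$, yielding exactly the right-hand side \eqref{fg:br}.

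The main obstacle, as usual for identities of bracket-product type, is not algebraic but measure-theoretic: one must justify three successive applications of Fubini--Tonelli (to pull $\sum_m$ inside the integral defining $P$, to multiply $P$ and $Q$ pointwise, and to interchange $\sum_k$ with the double integral arising from the product of inner products) and one application of Parseval on $U^{-1}\T^d$. The assumptions \eqref{f:br}--\eqref{g:br} are tailored so that each of these steps has an absolutely convergent majorant, and Lemma~\ref{brfr} (applied on the Fourier-transform side, or just its proof mimicked on the spatial side) gives exactly the bracket-product/Fourier-coefficient correspondence we need. Once these technical steps are in place, the identity follows by the chain of equalities above.
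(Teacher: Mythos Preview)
Your outline is essentially the paper's proof: recognize $\la f,h_{U;0,k}\ra$ and $\la\tilde h_{U;0,k},g\ra$ as Fourier coefficients of the bracket products $P=[f,h(U\cdot)]_U$ and $[g,\tilde h(U\cdot)]_U$ on $U^{-1}\dT$, apply Parseval, then unfold one periodization back to $\dR$. One point to tighten: you state only that $P,Q\in L_1(U^{-1}\dT)$, but the identity $\sum_k c_{-k}(P)c_k(Q)=c_0(PQ)$ is not ``standard'' for $L_1$ functions---it is Parseval and needs $P,Q\in L_2$. The hypotheses \eqref{f:br}--\eqref{g:br} are designed to give exactly this, via the estimate the paper writes out:
\[
\int_{U^{-1}[-\pi,\pi)^d}\|P(x)\|^2\,dx=\int_{\dR}f(x)\ol{h(Ux)}^{\tp}\ol{P(x)}^{\tp}\,dx\le\int_{\dR}\sum_{k\in\dZ}\|f(x)\ol{h(Ux)}^{\tp}\|\,\|f(x+2\pi U^{-1}k)\ol{h(Ux+2\pi k)}^{\tp}\|\,dx<\infty,
\]
and similarly for $Q$. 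Once you insert this $L_2$ bound in place of your $L_1$ claim, your argument and the paper's coincide.
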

\bp
Note that
\be\label{f:h:br}\begin{aligned}&\int_{U^{-1}[-\pi,\pi)^d}\|[f,h(U\cdot)]_U(x)\|^2d x=\int_{\dR}f(x)\ol{h(Ux)}^{\tp}\ol{[f,h(U\cdot)]_U(x)}^{\tp}dx\\
	\le&\int_{\dR}\sum_{k\in\dZ}\|f(x)\ol{h(Ux)}^{\tp}\|\|f(x+2\pi U^{-1}k)\ol{h(Ux+2\pi k)}^{\tp}\|dx<\infty.
\end{aligned}\ee
Thus $[f,h(U\cdot)]_U\in (L_2(U^{-1}\dT))^{1\times r}$, and by Lemma~\ref{brfr}, its $k$-th Fourier coefficient is given by:
\be\label{f:h:br:fc}|\det(U)|^{\frac{1}{2}}(2\pi)^{-d}\la f,h_{U;0,-k}\ra_{\dLp{2}}.\ee
Similarly we have $[g,\tilde{h}(U\cdot)g]_U\in (L_2(U^{-1}\dT))^{1\times r}$, with its $k$-th Fourier coefficient equals  
\be\label{g:h:br:fc}|\det(U)|^{\frac{1}{2}}(2\pi)^{-d}\la g,\tilde{h}_{U;0,-k}\ra_{\dLp{2}}.\ee
By Parseval's identity:
\be\label{f:g:frexp}\begin{aligned}
	\sum_{k\in\dZ}\la f,h_{U;0,k}\ra_{\dLp{2}}\la \tilde{h}_{U;0,k},g\ra_{\dLp{2}}=&(2\pi)^d\int_{U^{-1}[-\pi,\pi)^d}[f,h(U\cdot)]_U(x)[\tilde{h}(U\cdot),g]_U(x)dx\\
	=&(2\pi)^d\int_{\dR}\sum_{k\in\dZ}f(x)\ol{h(Ux)}^{\tp}\tilde{h}(Ux+2\pi k)\ol{g(x+2\pi U^{-1}k)}dx.
\end{aligned}\ee

\ep

We now introduce the notion of an affine system in $\dLp{2}$. 

\begin{definition}Let $\psi^0=[\psi^0_1,\dots,\psi^0_r]^{\tp}\in\dLr{2}{r},\psi^1,\dots,\psi^s\in \dLp{2}$ and $\dm_0,\dots,\dm_s$ be $d\times d$ dilation matrices. Define an affine system via
	\be\label{afs}\begin{aligned}\AS(\{\psi^l!\dm_l\}_{l=0}^s):=&\{\psi^0_q(\cdot-k):q=1,\dots,r; k\in\dZ\}\\
		&\cup\{|\det(\dm_0^{-1}\dm_l)|^{\frac{1}{2}}\psi^l_{\dm_0^j;\dm_0^{-1}\dm_lk}:l=1,\dots,s;j\in\N_0;k\in\dZ\}.
	\end{aligned}\ee
\end{definition}

Now we introduce the notion of framelets and wavelets in $\dLp{2}$ with mixed dilation factors.\\

%
%

Framelets/wavelets in $\dLp{2}$ are naturally connected with discrete framelets framelet/wavelet filter banks in $\dlp{2}$. The following result connects discrete affine systems in $\dlp{2}$ with affine systems in $\dLp{2}$.\\

\begin{prop}\label{stafr}Let $b_0\in\dlrs{0}{r}{r},b_1,\dots,b_s\in\dlrs{0}{1}{r}$ be finitely supported filters and let $\dm_0,\dots,\dm_s$ be $d\times d$ dilation matrices. Suppose that $\psi^0\in\dLr{2}{r}$ is a refinable vector function associated to $b_0$ satisfying \er{ref:t}. Define $\psi^1,\dots,\psi^s$ via \er{reff} for $l=1,\dots,s$.  Suppose in addition that $\psi^0\in\dLr{2}{r}$. For $f\in\dLp{2}$, define
	\be\label{dasas0}w_{l,j}(k)=\la f, |\det(\dm_0^{-1}\dm_l)|^{\frac{1}{2}}\psi^l_{\dm_0^j;\dm_0^{-1}\dm_lk}\ra_{\dLp{2}},\qquad k\in\dZ, \quad l=0,\dots,s, \quad j\in\N_0.\ee
	The following statements hold:
	\begin{enumerate}
		
		\item[(i)] For every $j\in\N_0, l=0,\dots,s$ and $ k\in\dZ$, we have \be\label{dasas}w_{l,j}(k)=\tz_{b_l\dm_l}w_{0,j+1}(k).\ee
		
		\item[(ii)] For every $J,j\in\N_0$ with $J>j$, $l=0,\dots,s$ and $k\in\dZ$, we have
		\be\label{da:coef}w_{l,J}(k)=\la w_{0,J},b_{l,J-j;k}\ra_{\dlp{2}},\ee
		where $b_{l,j;k}$ is defined via \er{b:l:j:k}.
		
		\item[(iii)] If further assume that $\{b_l!\dm_l\}_{l=0}^s$ has stability in $\dlp{2}$, then $\AS(\{\psi^l!\dm_l\}_{l=0}^s)$ is a Bessel sequence in  $\dLp{2}$.

	\end{enumerate}
\end{prop}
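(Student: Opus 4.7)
The plan is to derive items (ii) and (iii) from item (i) together with Lemma~\ref{comp} and the bracket bounds supplied by Proposition~\ref{refl2}. For item (i), I will compute both sides via the refinement relation in the frequency domain. Starting from $\wh{\psi^l_{\dm_0^j;\dm_0^{-1}\dm_lk}}$ via the dilation/modulation formula \er{f:ft:dil}, substituting $\wh{\psi^l}(\dm_0^{-j\tp}\xi)=\wh{b_l}(\dm_0^{-(j+1)\tp}\xi)\wh{\psi^0}(\dm_0^{-(j+1)\tp}\xi)$ from \er{reff}, and expanding $\wh{b_l}$ as a Fourier series in $n$, I obtain the pointwise identity
\[
|\det(\dm_0^{-1}\dm_l)|^{1/2}\psi^l_{\dm_0^j;\dm_0^{-1}\dm_lk}=|\det(\dm_l)|^{1/2}\sum_{n\in\dZ}b_l(n)\,\psi^0_{\dm_0^{j+1};\dm_lk+n}.
\]
Pairing with $f$ via $\la\cdot,\cdot\ra_{\dLp{2}}$, using the scalar identity $\la f,b_l(n)g\ra_{\dLp{2}}=\la f,g\ra_{\dLp{2}}\ol{b_l(n)}^{\tp}$ (valid since $b_l(n)\in\C^{1\times r}$ and $g\in\dLr{2}{r}$), and shifting the summation index $n\mapsto n-\dm_lk$ then matches the definition \er{tz} of $\tz_{b_l,\dm_l}$ and gives $w_{l,j}(k)=[\tz_{b_l,\dm_l}w_{0,j+1}](k)$.

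For item (ii), I iterate item (i). Specializing to $l=0$ yields the scaling relation $w_{0,j}=\tz_{b_0,\dm_0}w_{0,j+1}$, so for $J>j$,
\[
w_{l,j}=\tz_{b_l,\dm_l}\,\tz_{b_0,\dm_0}^{\,J-j-1}w_{0,J}.
\]
Lemma~\ref{comp} applied recursively identifies $\tz_{b_0,\dm_0}^{\,J-j-1}$ with $\tz_{b_{0,J-j-1},\dm_0^{J-j-1}}$ through \er{blj:1}, and one further composition produces $\tz_{b_l,\dm_l}\tz_{b_{0,J-j-1},\dm_0^{J-j-1}}=\tz_{b_{l,J-j},\dm_0^{J-j-1}\dm_l}$. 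Writing out this single transition operator applied to $w_{0,J}$ at index $k$ and matching the resulting expression against the definition \er{b:l:j:k} of $b_{l,J-j;k}$ then yields $w_{l,j}(k)=\la w_{0,J},b_{l,J-j;k}\ra_{\dlp{2}}$ for $J>j$.

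For item (iii), I interpret $w_{0,J}$ as the initial data $v_{0,0}$ of a $J$-level discrete framelet analysis employing $\{b_l!\dm_l\}_{l=0}^s$. Item (i) iterated yields $v_{l,j}=w_{l,J-j}$ for $l=0,\dots,s$ and $j=0,\dots,J$, so that
\[
\|\cW_Jw_{0,J}\|_{\dlrs{2}{1}{sJ+r}}^2=\|w_{0,0}\|^2+\sum_{j=0}^{J-1}\sum_{l=1}^{s}\|w_{l,j}\|^2.
\]
The stability hypothesis \er{l2:sta} supplies $\|\cW_Jw_{0,J}\|^2\le C_2\|w_{0,J}\|^2$. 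The main obstacle is to obtain a bound $\|w_{0,J}\|^2\le C\|f\|_{\dLp{2}}^2$ with $C$ independent of $J$. To this end, I expand $w_{0,J}(k)=\la f,\psi^0_{\dm_0^J;k}\ra_{\dLp{2}}$ via Parseval and substitute $\eta=\dm_0^{-J\tp}\xi$, which reveals $w_{0,J}(k)$ as a constant multiple of the $k$-th Fourier coefficient of the $2\pi\dZ$-periodization $P_J(\eta):=\sum_{m\in\dZ}\wh{f}(\dm_0^{J\tp}(\eta+2\pi m))\ol{\wh{\psi^0}(\eta+2\pi m)}^{\tp}$. Parseval on $\dT^d$, the Cauchy--Schwarz inequality applied to $P_J$, the bracket bound $\sum_{m}\|\wh{\psi^0}(\eta+2\pi m)\|^2\le C_r$ coming from the matrix estimate in Proposition~\ref{refl2}, and the change of variables $\xi=\dm_0^{J\tp}\eta$ combine to cancel the $|\det(\dm_0)|^J$ factors and yield $\|w_{0,J}\|^2\le C_r\|f\|_{\dLp{2}}^2$ uniformly in $J$. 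Letting $J\to\infty$ in the stability estimate and recognizing the resulting sum as the Bessel sum for $\AS(\{\psi^l!\dm_l\}_{l=0}^s)$ then completes the proof.
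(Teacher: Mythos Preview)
Your proposal is correct and follows essentially the same approach as the paper's proof: item~(i) is obtained by expanding $\psi^l$ via the refinement relation (you work in the frequency domain while the paper stays in the time domain, but the computations are equivalent), item~(ii) iterates~(i) and invokes Lemma~\ref{comp} exactly as the paper does, and item~(iii) bounds the Bessel sum by $C_2\|w_{0,J}\|^2$ via stability and then controls $\|w_{0,J}\|^2$ uniformly in $J$ using the bracket bound from Proposition~\ref{refl2}. The only cosmetic difference is that the paper packages the last step through Lemma~\ref{brfr} and the identity $\la f,\psi^0_{\dm_0^J;k}\ra=\la f_{\dm_0^{-J};0},\psi^0(\cdot-k)\ra$, whereas you carry out the same Parseval/Cauchy--Schwarz computation by hand.
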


\bp Note that \er{reff} is equivalent to
\be\label{reff:td}\psi^l(x)=|\det(\dm_l)|\sum_{k\in\dZ}b_l(k)\psi^0(\dm_0x-k),\qquad x\in\dR,\quad l=0,\dots,s.\ee
Thus we see that $\psi^1,\dots,\psi^s\in\dLp{2}$. By calculation:
$$\begin{aligned} w_{l,j}(k)&=\int_{\dR}f(x)|\det(\dm_0^{-1}\dm_l)|^{\frac{1}{2}}\ol{\psi^l_{\dm_0^j;\dm_0^{-1}\dm_lk}(x)}^{\tp}dx\\
&=|\det(\dm_0^{j-1}\dm_l)|^{\frac{1}{2}}\int_{\dR}f(x)\ol{\psi^l(\dm_0^jx-\dm_0^{-1}\dm_lk)}^{\tp}dx\\
&=\sum_{m\in\dZ}|\det(\dm_0^{j+1}\dm_l)|^{\frac{1}{2}}\left(\int_{\dR}f(x)\ol{\psi^0(\dm_0^{j+1}x-\dm_lk-m)}^{\tp}dx\right)\ol{\wh{b_l}(m)}^{\tp}\\
&=\sum_{m\in\dZ}|\det(\dm_l)|^{\frac{1}{2}}\la f,\psi^0_{\dm_0^{j+1};\dm_lk+m}\ra_{\dLp{2}}\ol{\wh{b_l}(m)}^{\tp}\\
&=\sum_{m\in\dZ}|\det(\dm_l)|^{\frac{1}{2}}w_{0,j+1}(\dm_lk+m)\ol{\wh{b_l}(m)}^{\tp}\\
&=\tz_{b_l,\dm_l}w_{0,j+1}(k),
\end{aligned}$$
for all $k\in\dZ$, $j\in\N_0$ and $l=0,\dots,s$. This proves item (i)\\

Next, define $b_{l,j}$ via \er{blj} for all $j\in\N$ and $l=0,\dots,s$. Use item (i), we have
$$\begin{aligned}w_{l,j}(k)&=\tz_{b_l,\dm_l}\tz_{b_0,\dm_0}^{J-j-1}w_{0,J}(k)=\tz_{(b_l\uparrow \dm_0^{J-j-1})*(b_0\uparrow \dm_0^{J-j-2})*\dots*(b_0\uparrow \dm_0)*b_0,\dm_0^{J-j-1}\dm_l}w_{0,J}(k)=\tz_{b_{l,J-j},\dm_0^{J-j-1}\dm_l}w_{0,J}(k)\\
&=|\det(\dm_l)|^{\frac{1}{2}}|\det(\dm_0)|^{\frac{J-j-1}{2}}\sum_{m\in\dZ}w_{0,J}(m)\ol{b_{l,J-j}(m-\dm_0^{J-j-1}\dm_lk)}^{\tp}=\la w_{0,J}, b_{l,J-j;k}\ra_{\dlp{2}},
\end{aligned}$$
whenever $J,j\in\N_0$ and $J>j$. This proves item (ii).\\

Finally, we prove item (iii). By items (i) and (ii), we have
\be\label{cdafs}\begin{aligned}&\sum_{k\in\dZ}\|\la f,\psi^0_{I_d;k}\ra_{\dLp{2}}\|^2+\sum_{j=0}^{J-1}\sum_{k\in\dZ}|\la f,|\det(\dm_0^{-1}\dm_l)|^{\frac{1}{2}}\psi^l_{\dm_0^j;\dm_0^{-1}\dm_lk}\ra_{\dLp{2}}|^2\\
	=&\sum_{k\in\dZ}\|w_{0,0}(k)\|^2+\sum_{j=0}^{J-1}\sum_{k\in\dZ}|w_{l,j}|^2=\sum_{k\in\dZ}\|\la w_{0,J},b_{0,J;k}\ra_{\dlp{2})}\|^2+\sum_{j=0}^{J-1}\sum_{k\in\dZ}|\la w_{0,J},b_{l,J-j;k}\ra_{\dlp{2}}|^2,
\end{aligned}\ee
for all $J\in\N$. Since $\{b_l!\dm_l\}_{l=0}^s$ has stability in $\dlp{2}$, Lemma~\ref{stadas} yields that there exists $C>0$ such that
\be\label{da:bes}\sum_{k\in\dZ}\|\la w_{0,J},b_{0,J;k}\ra_{\dlp{2})}\|^2+\sum_{j=0}^{J-1}\sum_{k\in\dZ}|\la w_{0,J},b_{l,J-j;k}\ra_{\dlp{2}}|^2\le C\|w_{0,J}\|_{\dlp{2}}^2,\qquad J\in\N.\ee
Moreover, by Proposition~\ref{refl2}, there exists $K>0$ such that $[\wh{\psi^0}^{\tp},\wh{\psi^0}^{\tp}](\xi)\le K$ for a.e. $\xi\in\dR$, i.e., $[\wh{\psi^0}^{\tp},\wh{\psi^0}^{\tp}]\in \dTLp{\infty}$. Thus $[\wh{f},\wh{\psi^0}]\in \dTLp{2}$ for all $f\in \dLp{2}$. Now use Lemma~\ref{brfr} and Parseval's identity, we have
\be\label{w:ub}\begin{aligned}
	\|w_{0,J}\|_{\dlp{2}}^2&=\sum_{k\in\dZ}|\la f,\psi^0_{\dm_0^J;k}\ra_{\dLp{2}}|^2=\sum_{k\in\dZ}|\la f_{\dm_0^{-J};0},\psi^0(\cdot-k)\ra_{\dLp{2}}|^2\\
	&=(2\pi)^{-d}\int_{[0,2\pi)^d}|[\wh{f_{(\dm_0^{\tp})^J;0}},\wh{\psi^0}](\xi)|^2d\xi \\
	&\le (2\pi)^{-d}\int_{[0,2\pi)^d}[\widehat{f_{(\dm_0^{T})^J;0}},\wh{f_{(\dm_0^{T})^J;0}}](\xi)[\wh{\psi^0}^{\tp},\wh{\psi^0}^{\tp}](\xi)d\xi\\
	&\le (2\pi)^{-d}K\int_{[0,2\pi)^d}[\wh{f_{(\dm_0^{\tp})^J;0}},\wh{f_{(\dm_0^{\tp})^J;0}}](\xi)d\xi\\
	&=(2\pi)^{-d}K\|\wh{f_{(\dm_0^{\tp})^J;0}}\|_{\dLp{2}}^2=K\|f\|_{\dLp{2}}^2.
\end{aligned}\ee
It follows from \er{cdafs}, \er{da:bes} and \er{w:ub} that 
\be\label{f:ft:ub}\sum_{k\in\dZ}\|\la f,\psi^0_{I_d;k}\ra_{\dLp{2}}\|^2+\sum_{j=0}^{J-1}\sum_{k\in\dZ}|\la f,|\det(\dm_0^{-1}\dm_l)|^{\frac{1}{2}}\psi^l_{\dm_0^j;\dm_0^{-1}\dm_lk}\ra_{L^2(\mathbb{R}^d)}|^2\le CK\|f\|_{\dLp{2}}^2\ee
for all $f\in \dLp{2}$ and $J\in\N$. By letting $J\to\infty$, we see that $\AS(\{\phi^l!\dm_l\}_{l=0}^s)$ is a Bessel sequence in $\dLp{2}$. This proves item (iii).

\ep

We are now ready to state the main theorem, which connects discrete framelet filter banks and framelets in $\dlp{2}$.

\begin{theorem}\label{charfrl2}Let $b_0,\tilde{b}_0\in\dlrs{0}{r}{r}, b_1,\dots,b_s,\tilde{b}_1,\dots\tilde{b}_s\in\dlrs{0}{1}{r}$ be finitely supported filters and let $\dm_0,\dots,\dm_s$ be $d\times d$ dilation matrices. Suppose $\psi^0,\tilde{\psi}^0\in\dLr{2}{r}$ are compactly supported standard refinable vector functions satisfying
	\be\label{ref:f}\wh{\psi^0}(\dm_0^{\tp}\xi)=\wh{b_0}(\xi)\wh{\psi^0}(\xi),\qquad\wh{\tilde{\psi}^0}(\dm_0^{\tp}\xi)=\wh{\tilde{b}_0}(\xi)\wh{\tilde{\psi}^0}(\xi),\qquad\xi\in\dR,\ee
	and $\ol{\wh{\psi^0}(0)}^{\tp}\wh{\tilde{\psi}(0)}=1$. Define $\psi^1,\dots,\psi^s,\tilde{\psi}^1,\dots,\tilde{\psi}^s\in\dLp{2}$ via
	\be\label{aff}\wh{\psi^l}(\xi)=\wh{b_l}(\dm_0^{-\tp}\xi)\wh{\psi^0}(\dm_0^{-\tp}\xi),\qquad  \wh{\tilde{\psi}^l}(\xi)=\wh{\tilde{b}_l}(M_0^{-\tp}\xi)\wh{\tilde{\psi}^0}(\dm_0^{-\tp}\xi),\qquad\xi\in\dR.\ee

	Then
	
	\begin{enumerate}
		
		\item[(i)] $(\{b_l!\dm_l\}_{l=0}^s,\{\tilde{b}_l!\dm_l\}_{l=0}^s)$ is a dual framelet filter bank with mixed dilation factors. Furthermore, both $\{b_l!\dm_l\}_{l=0}^s$ and $\{\tilde{b}_l!\dm_l\}_{l=0}^s$ have stability in $\dlp{2}$,\\*

		\item[] implies	\\*

		\item[(ii)] $(\{\psi^l!\dm_l\}_{l=0}^s,\{\tilde{\psi}^l!\dm_l\}_{l=0}^s)$ is a dual framelet in $\dLp{2}$. Furthermore, there exists $C>0$ such that
		\be\label{ulbs}[\wh{\psi^0},\wh{\psi^0}](\xi)+[\wh{\tilde{\psi}^0},\wh{\tilde{\psi}^0}](\xi)\le CI_r,\qquad\text{a.e. }\xi\in\dR.\ee
	Here, for two square matrices $A,B$ of the same dimension, the relation $A\le B$ means $B-A$ is positive semi-definite.	
	\end{enumerate}
	
	Conversely, if in addition
	\be\label{llbs}[\wh{\psi^0},\wh{\psi^0}](\xi)\ge C'I_r,\qquad[\wh{\tilde{\psi}^0},\wh{\tilde{\psi}^0}](\xi)\ge C'I_r,\qquad\text{a.e. }\xi\in\dR,\ee
	for some constant $C'>0$, then item (ii) implies item (i).
\end{theorem}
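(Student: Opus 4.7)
The plan is to treat the two implications separately. The forward direction (i)$\Rightarrow$(ii) is obtained by combining the refinable-function machinery of Proposition~\ref{refl2} with a scale-by-scale telescoping of the $\dLp{2}$-inner product; the converse reverses the telescoping and extracts the discrete filter bank identity pointwise using the lower bracket bound \er{llbs}. The central technical tool throughout is Lemma~\ref{br}, which converts infinite sums of inner products into integrals over a fundamental domain and, combined with the refinement relations \er{ref:f}--\er{aff}, lets the dual framelet filter bank identity \er{pr:frt:mix} act directly on the Fourier side.

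For (i)$\Rightarrow$(ii), I would first apply Proposition~\ref{refl2} separately to $\{b_l!\dm_l\}_{l=0}^s$ and $\{\tilde{b}_l!\dm_l\}_{l=0}^s$; since both filter banks have $\dlp{2}$-stability by assumption and $\psi^0,\tilde{\psi}^0$ are the corresponding standard refinable vector functions, \er{ulbs} follows at once, and Proposition~\ref{stafr}(iii) produces the Bessel property of both $\AS(\{\psi^l!\dm_l\}_{l=0}^s)$ and $\AS(\{\tilde{\psi}^l!\dm_l\}_{l=0}^s)$ in $\dLp{2}$. To obtain \er{dual:frt}, set $I_j(f,g):=\sum_{k\in\dZ}\la f,\psi^0_{\dm_0^j;k}\ra_{\dLp{2}}\la\tilde{\psi}^0_{\dm_0^j;k},g\ra_{\dLp{2}}$ and verify the one-step cascade
\[
I_{j+1}(f,g) - I_j(f,g) = \sum_{l=1}^s\sum_{k\in\dZ} |\det(\dm_0^{-1}\dm_l)|\,\la f,\psi^l_{\dm_0^j;\dm_0^{-1}\dm_l k}\ra_{\dLp{2}}\la\tilde{\psi}^l_{\dm_0^j;\dm_0^{-1}\dm_l k},g\ra_{\dLp{2}}
\]
by rewriting both sides as Fourier-side integrals via Lemma~\ref{br}, unrolling one layer of the refinement via \er{ref:f}--\er{aff}, and invoking \er{pr:frt:mix}. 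Telescoping from $j=0$ to $J-1$ and letting $J\to\infty$ reduces the matter to showing $I_J(f,g)\to\la f,g\ra_{\dLp{2}}$, which follows from the normalization $\ol{\wh{\psi^0}(0)}^{\tp}\wh{\tilde{\psi}^0}(0)=1$ together with a Lebesgue-density argument on a dense subclass of $\dLp{2}$; the Bessel bounds then extend the expansion to all of $\dLp{2}$.

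For the converse, assuming (ii) and the lower bound \er{llbs}, I would take the $\dLp{2}$-identity \er{dual:frt}, convert each sum to a bracket-product integral via Lemma~\ref{br}, and use the refinement relations to rewrite the $l\ge 1$ terms in terms of $\wh{\psi^0}$ and $\wh{\tilde{\psi}^0}$. Testing with $f,g$ whose Fourier transforms are supported in small balls near $(\dm_0^{\tp})^{J+1}\eta_0$ and around its $2\pi(\dm_0^{\tp})^{J+1}\omega$-translates for $\omega\in\bigcup_{l}\Omega_l$, and taking $J$ large enough that these balls do not overlap for different $\omega$, one arrives at a pointwise matrix identity of the schematic form
\[
\sum_{l=0}^s \chi_{\Omega_l}(\omega)\,\ol{\wh{b_l}(\xi)}^{\tp}\wh{\tilde{b}_l}(\xi+2\pi\omega)\,M(\xi+2\pi\omega) = \td(\omega)\,M(\xi),
\]
where $M(\xi):=[\wh{\psi^0},\wh{\tilde{\psi}^0}](\dm_0^{\tp}\xi)$. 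The two-sided bounds \er{ulbs} and \er{llbs} render $M$ boundedly invertible a.e., so multiplying through by $M^{-1}$ yields \er{pr:frt:mix}. The $\dlp{2}$-stability of both filter banks is then obtained by reversing the inequality chain \er{w:ub}: the lower bracket bound supplies the missing lower frame estimate for the discrete analysis operator $\cW_J$, which by Lemma~\ref{stadas} is precisely stability.

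The main obstacle in both directions is the bracket-product bookkeeping intertwined with the mixed sampling lattices $\Omega_l$. In the forward direction the one-step cascade requires carefully matching the $2\pi\dm_l^{-\tp}\dZ$-periodicity appearing in the $l$-th wavelet Fourier sum with the $2\pi\dm_0^{-\tp}\dZ$-periodicity used by $\psi^0$; this is precisely where the factor $|\det(\dm_0^{-1}\dm_l)|$ makes its appearance and where one must argue sublattice-by-sublattice, as already encountered in \er{sub:lat}--\er{sub:lat:2}. In the converse the delicate step is the localization/peeling argument together with the pointwise inversion of the bracket matrix $M$; without the lower bound \er{llbs} one cannot strip $M$ from the identity, so the discrete filter bank condition would remain entangled with continuous data and no pointwise matrix identity could be recovered.
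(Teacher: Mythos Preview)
Your forward direction (i)$\Rightarrow$(ii) matches the paper's argument almost exactly: both use Lemma~\ref{br} to convert the shift sums into Fourier integrals, establish the one-step cascade from \er{pr:frt:mix}, telescope, pass to the limit via the normalization $\ol{\wh{\psi^0}(0)}^{\tp}\wh{\tilde{\psi}^0}(0)=1$ and dominated convergence on the dense class $D=\{f:\wh{f}\in\dDC\}$, and finish with Propositions~\ref{refl2} and~\ref{stafr}(iii).

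The converse, however, contains a genuine gap. You propose to arrive at an identity of the form
\[
\sum_{l=0}^s\chi_{\Omega_l}(\omega)\,\ol{\wh{b_l}(\xi)}^{\tp}\wh{\tilde{b}_l}(\xi+2\pi\omega)\,M(\xi+2\pi\omega)=\td(\omega)\,M(\xi),\qquad M(\xi):=[\wh{\psi^0},\wh{\tilde{\psi}^0}](\dm_0^{\tp}\xi),
\]
and then multiply through by $M^{-1}$, claiming that \er{ulbs} and \er{llbs} make $M$ boundedly invertible. But \er{ulbs} and \er{llbs} bound only the \emph{diagonal} brackets $[\wh{\psi^0},\wh{\psi^0}]$ and $[\wh{\tilde{\psi}^0},\wh{\tilde{\psi}^0}]$; they say nothing about the invertibility of the \emph{mixed} bracket $[\wh{\psi^0},\wh{\tilde{\psi}^0}]$. (Cauchy--Schwarz gives an upper bound on $M$, never a lower one.) Without additional hypotheses the mixed bracket can be singular, so your peeling step fails.

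The paper avoids this by not passing to a bracket matrix at all. From \er{prexp} and the dual-framelet identity it obtains, for each $k\in\dZ$, the pointwise relation
\[
\ol{\wh{\psi^0}(\xi)}^{\tp}\Big(\sum_{l=0}^s\chi_{\Omega_l}(\omega)\ol{\wh{b_l}(\xi)}^{\tp}\wh{\tilde{b}_l}(\xi+2\pi\omega)-\td(\omega)I_r\Big)\wh{\tilde{\psi}^0}(\xi+2\pi\omega+2\pi k)=0,
\]
and then uses the fact that \er{llbs} forces $\mathrm{span}\{\wh{\psi^0}(\xi+2\pi k):k\in\dZ\}=\C^r$ and $\mathrm{span}\{\wh{\tilde{\psi}^0}(\xi+2\pi k):k\in\dZ\}=\C^r$ for a.e.\ $\xi$. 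This lets one strip the vectors $\ol{\wh{\psi^0}(\xi)}^{\tp}$ and $\wh{\tilde{\psi}^0}(\xi+2\pi\omega+2\pi k)$ from both sides simultaneously, yielding \er{pr:frt:mix}. Your stability argument (reversing \er{w:ub} via the Riesz-basis property of the shift-invariant system) is in the right spirit and coincides with the paper's treatment; only the extraction of the discrete PR identity needs to be reworked along the lines above.
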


\bp Define 
\be\label{L2:D}D:=\{f\in \dLp{2}:\wh{f}\in \dDC\}.\ee
By Lemma~\ref{br}, for every $f,g\in D$ and $j\in\N_0$, we have
\be\label{psi0j}\begin{aligned}
	&\sum_{k\in\dZ}\la \wh{f},\wh{\psi^0}_{((\dm_0)^{-\tp})^j;0,k}\ra_{\dLp{2}}\la \wh{\tilde{\psi}^0}_{((\dm_0)^{-\tp})^j;0,k,\wh{g}}\ra_{\dLp{2}}\\
	=&(2\pi)^d\int_{\dZ}\sum_{k\in\dZ}\wh{f}(\xi)\ol{\wh{\psi^0}((\dm_0^{-\tp})^j\xi)}^{\tp}\wh{\tilde{\psi}^0}((\dm_0^{-\tp})^j\xi+2\pi k)\ol{\wh{g}(\xi+2\pi(\dm_0^{\tp})^jk)}d\xi.
\end{aligned}\ee
Choose $j$ sufficiently large such that $\wh{f}(\xi)\hat{g}(\xi+2\pi(\dm_0^\tp)^j\xi)=0$ for all $\xi\in\dR$ and $k\in\dZ\setminus\{0\}$, we have
\be\label{psi0j:0}\sum_{k\in\dZ}\la \wh{f},\wh{\psi^0}_{((\dm_0)^{-\tp})^j;0,k}\ra_{\dLp{2}}\la \wh{\tilde{\psi}^0}_{((\dm_0)^{-\tp})^j;0,k,\wh{g}}\ra_{\dLp{2}}
=(2\pi)^d\int_{\dR}\wh{f}(\xi)\ol{\wh{\psi^0}((\dm_0^{-\tp})^j\xi)}^{\tp}\wh{\tilde{\psi}^0}((\dm_0^{-\tp})^j\xi)\ol{\wh{g}(\xi)}d\xi.\ee
By Lemma~\ref{ref0}, $\wh{\psi^0}$ and $\wh{\tilde{\psi}^0}$ are vectors of bounded measurable functions and
\be\label{psi:0:lim}\lim_{j\to\infty}\ol{\wh{\psi^0}((\dm_0^{-\tp})^j\xi)}^{\tp}\wh{\tilde{\psi}^0}((\dm_0^{-\tp})^j\xi)=\ol{\wh{\psi^0}(0)}^{\tp}\wh{\tilde{\psi}^0}(0)=1,\qquad\xi\in\dR.\ee 
It follows from the Dominated convergence theorem that
\be\label{psi:0:dc}\lim_{j\to\infty}\sum_{k\in\dZ}\la \hat{f},\wh{\psi^0}_{((\dm_0)^{-\tp})^j;0,k}\ra_{\dLp{2}}\la \wh{\tilde{\psi}^0}_{((\dm_0)^{-\tp})^j;0,k,\wh{g}}\ra_{\dLp{2}}=(2\pi)^d\int_{\dR}\wh{f}(\xi)\ol{\wh{g}(\xi)}d\xi=(2\pi)^d\la\wh{f},\wh{g}\ra_{\dLp{2}}.\ee
Define 
\be\label{eta:psi}\eta^l:=\psi^l(\dm_0^{-1}\dm_l\cdot),\qquad\tilde{\eta}^l=\tilde{\psi}^l(\dm_0^{-1}\dm_l\cdot),\qquad l=1,\dots,s.\ee
We have
\be\label{psi:eta}\psi^l(\cdot-\dm_0^{-1}\dm_lk)=\eta^l(\dm_l^{-1}\dm_0\cdot-k),\qquad \wh{\eta^l}=|\det(\dm_0^{-1}\dm_l)|^{-1}\wh{\psi^l}(\dm_0^\tp\dm_l^{-\tp}\cdot),\ee
\be\label{tpsi:eta}\tilde{\psi}^l(\cdot-\dm_0^{-1}\dm_lk)=\tilde{\eta}^l(\dm_l^{-1}\dm_0\cdot-k),\qquad \wh{\tilde{\eta}^l}=|\det(\dm_0^{-1}\dm_l)|^{-1}\wh{\tilde{\psi}^l}(\dm_0^{\tp}\dm_l^{-\tp}\cdot),\ee
for all $l=1,\dots,s$. For $f,g\in D$, Lemma~\ref{br} yields
\be\label{f:g:eta}\begin{aligned}
	&|\det(\dm_0^{-1}\dm_l)|^2\sum_{k\in\dZ}\la \wh{f},\wh{\eta^l}_{\dm_l^\tp M_0^{-\tp};0,k}\ra_{\dLp{2}}\la \wh{\tilde{\eta}^l}_{\dm_l^\tp M_0^{-\tp};0,k},\wh{g}\ra_{\dLp{2}}\\
	=&(2\pi)^d|\det(\dm_0^{-1}\dm_l)|^2\int_{\dR}\sum_{k\in\dZ}\wh{f}(\xi)\ol{\wh{\eta^l}(\dm_l^\tp\dm_0^{-\tp}\xi)}^{\tp}\wh{\tilde{\eta}^l}(\dm_l^\tp\dm_0^{-\tp}\xi+2\pi k)\ol{\wh{g}(\xi+2\pi(\dm_0^\tp \dm_l^{-\tp}) k)}d\xi.\\
	=&(2\pi)^d\int_{\dR}\sum_{k\in\dZ}\wh{f}(\xi)\ol{\wh{\psi^l}(\xi)}^{\tp}\wh{\tilde{\psi}^l}(\xi+2\pi (\dm_0^\tp\dm_l^{-tp}) k)\ol{\wh{g}(\xi+2\pi(\dm_0^\tp\dm_l^{-tp}) k)}d\xi.\\
	=&(2\pi)^d\int_{\dR}\sum_{k\in\dZ}\wh{f}(\xi) \ol{\wh{b_l}(\dm_0^{-\tp}\xi)\wh{\psi^0}(\dm_0^{-\tp}\xi)}^{\tp}\wh{\tilde{b}_l}(\dm_0^{-\tp}\xi+2\pi (\dm_l^{-\tp}) k)\wh{\tilde{\psi}^0}(\dm_0^{-\tp}\xi+2\pi (\dm_l^{-\tp}) k)\\
	&\quad\times\ol{\wh{g}(\xi+2\pi(\dm_0^\tp\dm_l^{-\tp})k)}d\xi.\\
	=&(2\pi)^d\int_{\dR}\wh{f}(\xi)\ol{\wh{\psi^0}(\dm_0^{-\tp}\xi)}^{\tp}\left(\sum_{\omega\in\Omega_l}\ol{\wh{b_l}(\dm_0^{-\tp}\xi)}\wh{\tilde{b}_l}(\dm_0^{-\tp}\xi+2\pi\omega)\right)\\
	&\quad\times\sum_{k\in\dZ}\wh{\tilde{\psi}^0}(\dm_0^{-\tp}\xi+2\pi\omega+2\pi k)\ol{\wh{g}(\xi+2\pi \dm_0\omega+2\pi \dm_0k)}d\xi.
\end{aligned}\ee
Similarly:
\be\begin{aligned}
	&\sum_{k\in\dZ}\la \wh{f},\wh{\psi^0}_{I_d;0,k}\ra_{\dLp{2}}\la \wh{\tilde{\psi}^0}_{I_d;0,k},\wh{g}\ra_{\dLp{2}}\\
	=&(2\pi)^d\int_{\dR}\wh{f}(\xi)\ol{\wh{\psi^0}(\dm_0^{-\dm}\xi)}^{\tp}\left(\sum_{\omega\in\Omega_0}\ol{\wh{b_0}(\dm_0^{-\dm}\xi)}\wh{\tilde{b}_0}(\dm_0^{-\tp}\xi+2\pi\omega)\right)\\
	&\quad\times\sum_{k\in\dZ}\wh{\tilde{\psi}^0}(\dm_0^{-\tp}\xi+2\pi\omega+2\pi k)\ol{\wh{g}(\xi+2\pi \dm_0\omega+2\pi \dm_0k)}d\xi.
\end{aligned}\ee
It follows that
\be\label{prexp}\begin{aligned}&\sum_{k\in\dZ}\la \wh{f},\wh{\psi^0}_{I_d;0,k}\ra_{\dLp{2}}\la \wh{\tilde{\psi}^0}_{I_d;0,k},\wh{g}\ra_{\dLp{2}}+\sum_{l=1}^s|\det(\dm_0^{-1}\dm_l)|^2\sum_{k\in\dZ}\la \wh{f},\wh{\eta^l}_{\dm_l^\tp\dm_0^{-\tp};0,k}\ra_{\dLp{2}}\la \wh{\tilde{\eta}^l}_{\dm_l^\tp\dm_0^{-\tp};0,k},\wh{g}\ra_{\dLp{2}}\\
	=&(2\pi)^d\int_{\dR}\wh{f}(\xi)\ol{\wh{\psi^0}(\dm_0^{-\tp}\xi)}^{\tp}\left(\sum_{\omega\in\Omega}\sum_{l=0}^s\chi_{\Omega_l}(\omega)\ol{\wh{b_l}(\dm_0^{-\tp}\xi)}\wh{\tilde{b}_l}(\dm_0^{\tp}\xi+2\pi\omega)\right)\\
	&\quad\times\sum_{k\in\dZ}\wh{\tilde{\psi}^0}(\dm_0^{-\tp}\xi+2\pi\omega+2\pi k)\ol{\wh{g}(\xi+2\pi \dm_0\omega+2\pi \dm_0k)}d\xi
\end{aligned}
\ee
for all $f,g\in D$.\\

Suppose item (i) holds. By \er{pr:frt:mix},\er{psi0j} and \er{prexp}, we have 
\be\label{pr:fr}\begin{aligned}&\sum_{k\in\dZ}\la \wh{f},\wh{\psi^0}_{I_d;0,k}\ra_{\dLp{2}}\la \wh{\tilde{\psi}^0}_{I_d;0,k},\wh{g}\ra_{\dLp{2}}+\sum_{l=1}^s|\det(\dm_0^{-1}\dm_l)|^2\sum_{k\in\dZ}\la \wh{f},\wh{\eta^l}_{\dm_l^\tp\dm_0^{-\tp};0,k}\ra_{\dLp{2}}\la \wh{\tilde{\eta}^l}_{\dm_l^\tp\dm_0^{-\tp};0,k},\wh{g}\ra_{\dLp{2}}\\
	=&(2\pi)^d\int_{\dR}\wh{f}(\xi)\ol{\wh{\psi^0}(\dm_0^{-\tp}\xi)}^{\tp}\sum_{k\in\dZ}\wh{\tilde{\psi}^0}(\dm_0^{-\tp}\xi+2\pi k)\ol{\wh{g}(\xi+2\pi \dm_0k)}d\xi\\
	=&\sum_{k\in\dZ}\la \wh{f},\wh{\psi^0}_{\dm_0^{-\tp};0,k}\ra_{\dLp{2}}\la \wh{\tilde{\psi}^0}_{\dm_0^{-\tp};0,k},\wh{g}\ra_{\dLp{2}},
\end{aligned}\ee
for all $f,g\in D$. Note that 
\be\label{scale}\la \wh{f}_{U^{-1};0},\wh{g}\ra_{\dLp{2}}=\la \wh{f},\wh{g}_{U;0}\ra_{\dLp{2}},\qquad f,g\in\dLp{2}.\ee
It follows from \er{pr:fr} that
\be\label{casc1}\begin{aligned}&\sum_{k\in\dZ}\la \wh{f},\wh{\psi^0}_{(\dm_0^{-\tp})^j;0,k}\ra_{\dLp{2}}\la \wh{\tilde{\psi}^0}_{(\dm_0^{-\tp})^j;0,k},\wh{g}\ra_{\dLp{2}}\\
	&\quad+\sum_{l=1}^s|\det(\dm_0^{-1}\dm_l)|^2\sum_{k\in\dZ}\la \wh{f},\wh{\eta^l}_{\dm_l^\tp(\dm_0^{-\tp})^j;0,k}\ra_{\dLp{2}}\la \wh{\tilde{\eta}^l}_{\dm_l^\tp(\dm_0^{-\tp})^j;0,k},\wh{g}\ra_{\dLp{2}}\\
	=&\sum_{k\in\dZ}\la \wh{f},\wh{\psi^0}_{(\dm_0^{-\tp})^{j+1};0,k}\ra_{\dLp{2}}\la \wh{\tilde{\psi}^0}_{(\dm_0^{-\tp})^{j+1};0,k},\wh{g}\ra_{\dLp{2}},
\end{aligned}\ee
for all $f,g\in D$ and $j\in\mathbb{N}_0$. So for $m,n\in\mathbb{N}_0$ with $m>n$, we have
\be\label{casc2}\begin{aligned}&\sum_{k\in\dZ}\la \wh{f},\wh{\psi^0}_{(\dm_0^{-\tp})^n;0,k}\ra_{\dLp{2}}\la \wh{\tilde{\psi}^0}_{(\dm_0^{-\tp})^n;0,k},\wh{g}\ra_{\dLp{2}}\\
	&\quad+\sum_{j=n}^{m}\sum_{l=1}^s|\det(\dm_0^{-1}\dm_l)|^2\sum_{k\in\dZ}\la \wh{f},\wh{\eta^l}_{\dm_l^\tp(\dm_0^{-\tp})^j;0,k}\ra_{\dLp{2}}\la \wh{\tilde{\eta}^l}_{\dm_l^\tp(\dm_0^{-\tp})^j;0,k},\wh{g}\ra_{\dLp{2}}\\
	=&\sum_{k\in\dZ}\la \wh{f},\wh{\psi^0}_{(\dm_0^{-\tp})^{m+1};0,k}\ra_{\dLp{2}}\la \wh{\tilde{\psi}^0}_{(\dm_0^{-\tp})^{m+1};0,k},\wh{g}\ra_{\dLp{2}}.
\end{aligned}\ee
By letting $m\to\infty$, \er{psi:0:dc} and \er{casc2} yield
\be
\label{casc3}\begin{aligned}&\sum_{k\in\dZ}\la \wh{f},\wh{\psi^0}_{(\dm_0^{-\tp})^n;0,k}\ra_{\dLp{2}}\la \wh{\tilde{\psi}^0}_{(\dm_0^{-\tp})^n;0,k},\wh{g}\ra_{\dLp{2}}\\
	&\quad+\sum_{j=n}^{\infty}\sum_{l=1}^s|\det(\dm_0^{-1}\dm_l)|^2\sum_{k\in\dZ}\la \wh{f},\wh{\eta^l}_{\dm_l^\tp(\dm_0^{-\tp})^j;0,k}\ra_{\dLp{2}}\la \wh{\tilde{\eta}^l}_{\dm_l^\tp(\dm_0^{-\tp})^j;0,k},\wh{g}\ra_{\dLp{2}}\\
	=&(2\pi)^d\la\wh{f},\wh{g}\ra_{\dLp{2}},\end{aligned}\ee
for all $f,g\in D$ and $n\in\N_0$. By Plancherel's theorem and letting $n=0$ in \eqref{casc3}, we see that \er{dual:frt} holds for all $f,g\in D$. Since $D$ is dense in $\dLp{2}$, we conclude that \er{dual:frt} holds for all $f,g\in \dLp{2}$. Next, by item (iii) of Proposition~\ref{stafr}, the stability of $\{b_l!\dm_l\}_{l=0}^s$ and $\{\tilde{b}_l!\dm_l\}_{l=0}^s$ implies that both $\AS(\{\psi^l!\dm_l\}_{l=0}^s)$ and $\AS(\{\tilde{\psi}^l!\dm_l\}_{l=0}^s)$ are Bessel sequences. Thus by theory of Hilbert spaces (see e.g. \cite[Theorem 4.2.5]{hanbook}), $(\AS(\{\tilde{\psi}^l!\dm_l\}_{l=0}^s),\AS(\{\psi^l!\dm_l\}_{l=0}^s))$ is a pair of dual frames for $\dLp{2}$. Finally, \er{ulbs} follows right away from the item (iii) of Proposition~\ref{refl2}. This proves the direction (i) $\Rightarrow$ (ii).\\

Conversely, suppose that $(\{\psi^l!\dm_l\}_{l=0}^s,\{\tilde{\psi}^l!\dm_l\}_{l=0}^s)$ is a dual framelet in $\dLp{2}$. It follows from \er{psi0j}, \er{prexp} and \er{pr:fr} that
\be
\label{prexp1}\begin{aligned}0=&\int_{\dR}\wh{f}(\xi)\ol{\wh{\psi^0}(\dm_0^{-\tp}\xi)}^{\tp}\left(\sum_{\omega\in\Omega}\sum_{l=0}^s\chi_{\Omega_l}(\omega)\ol{\wh{b_l}(\dm_0^{-\tp}\xi)}\wh{\tilde{b}_l}(\dm_0^{-\tp}\xi+2\pi\omega)-\td(\omega)I_r\right)\\
	&\quad\times\sum_{k\in\dZ}\wh{\tilde{\psi}^0}(\dm_0^{-\tp}\xi+2\pi\omega+2\pi k)\ol{\wh{g}(\xi+2\pi \dm_0\omega+2\pi \dm_0k)}d\xi,\end{aligned}\ee
for all $f,g \in\dLp{2}$By using the similar argument as in the proof of (ii) $\Leftarrow$ (iii) in Theorem~\ref{pr:mix} (or see the proof of \cite[Lemma 5]{han10:0}), one can conclude that 
\be\label{gpr}\ol{\wh{\psi^0}(\xi)}^{\tp}\left(\sum_{l=0}^s\chi_{\Omega_l}(\omega)\ol{\wh{b_l}(\xi)}\wh{\tilde{b}_l}(\xi+2\pi\omega)-\td(\omega)I_r\right)\wh{\tilde{\psi}^0}(\xi+2\pi\omega+2\pi k)=0\end{equation}
for a.e. $\xi\in\dR$. Note that the condition \er{llbs} implies that
\be\label{span:psi}\text{span}\left\{\wh{\psi^0}(\xi+2\pi k):k\in\dZ\right\}=\C^r,\qquad\left\{\wh{\psi^0}(\xi+2\pi k):k\in\dZ\right\}=\C^r,\qquad \text{a.e. }\xi\in\dR.\ee
Thus \er{gpr} yields \er{pr:frt:mix}, and we conclude that $(\{b_l!\dm_l\}_{l=0}^s,\{\tilde{b}_l!\dm_l\}_{l=0}^s)$ is a dual framelet filter bank with mixed dilation factors. It remains to prove that both $\{b_l!\dm_l\}_{l=0}^s$ and $\{\tilde{b}_l!\dm_l\}_{l=0}^s$ have stability in $\dlp{2}$ under the additional assumptions that \er{ulbs} and \er{llbs} hold. Define the \emph{shift invariant space} generated by $\psi^0=[\psi^0_1,\dots,\psi^0_r]^{\tp}$ via
\be\label{shinv}S(\psi^0):=\ol{\text{span}\{\psi^0_l(\cdot-k):l=1,\dots,r;  k\in\dZ\}}.\ee
By \er{ulbs}, \er{llbs} and \cite[Theorem 4.4.12]{hanbook}, $\{\psi^0_l(\cdot-k):l=1,\dots,r;  k\in\dZ\}$ is a Riesz basis of $S(\psi^0)$. This means that the map
\be\label{ana:shinv}\cW_{\psi^0}:S(\psi^0)\to\dlr{2}{r},\qquad \cW_{\psi^0}(f)=\{\la f,\psi^0(\cdot-k)\ra_{\dLp{2}}\}_{k\in\dZ}\ee
is a well-defined bounded isomorphism. Thus for each $v\in\dlr{2}{r}$, there exists a unique $h^v\in S(\psi^0)$ such that $\cW_{\psi^0}h^v=v$. Use \cite[Proposition 4.4.13]{hanbook}, we have
\be\label{ub:v}\|v\|_{\dlr{2}{r}}^2=\sum_{k\in\dZ}|\la h_v,\psi^0(\cdot-k)\ra_{\dLp{2}}|^2\ge C_2\|h^v\|_{\dLp{2}}^2,\ee
for some constant $C_2>0$. On the other hand, let $\cW_J$ be the $J$-level discrete framelet analysis operator employing the filter bank $\{b_l!\dm_l\}_{l=0}^s$. By \er{sta:fb}, \er{cdafs} and the fact that $\la h^v_{\dm_0^J;0},\psi^0_{\dm_0^J;k}\ra_{\dLp{2}}=\la h^v,\psi^0(\cdot-k)\ra_{\dLp{2}}=v(k)$, we have

\be\label{Wjv:dfr}\begin{aligned}\|\cW_Jv\|_{\dlrs{2}{1}{(sJ+r)}}^2=&\sum_{k\in\dZ}\|\la h^v_{\dm_0^J},\psi^0(\cdot-k)\ra_{\dLp{2}}\|^2+\sum_{j=0}^{J-1}\sum_{k\in\dZ}|\la h^v_{\dm_0^J},|\det(\dm_0^{-1}\dm_l)|^{\frac{1}{2}}\psi^l_{\dm_0^j;\dm_0^{-1}\dm_lk}\ra_{\dLp{2}}|^2\\
\le & D\|h^v\|_{\dLp{2}}^2,
\end{aligned}\ee
for some constant $D>0$. Hence \er{ub:v} and \er{Wjv:dfr} together yield
\be\label{WJ:ub}\|\cW_Jv\|_{\dlrs{2}{1}{(sJ+r)}}^2\le  DC_2^{-1}\|v\|_{\dlr{2}{r}}^2,\qquad v\in\dlr{2}{r},J\in\N.\ee
Similarly we can prove that there exists $D'>0$ such that
\be\label{tWJ:ub}\|\tilde{\cW}_Jv\|_{\dlrs{2}{1}{(sJ+r)}}^2\le  D'C_2^{-1}\|v\|_{\dlr{2}{r}}^2,\qquad v\in\dlr{2}{r},J\in\N.\ee
Thus by item (ii) of Theorem~\ref{stfr}, we see that both $\{b_l!\dm_l\}_{l=0}^s)$ and $\{\tilde{b}_l!\dm_l\}_{l=0}^s$ have stability in $\dlp{2}$. This completes the proof.

\ep

A similar result that connects discrete wavelet filter banks in $\dlp{2}$ and biorthogonal wavelets in $\dLp{2}$ can be established, which is the following theorem.

\begin{theorem}\label{charwvl2}Let $b_0,\tilde{b}_0\in\dlrs{0}{r}{r}, b_1\dots,b_s,\tilde{b}_1,\dots,\tilde{b}_s\in\dlrs{0}{1}{r}$ be finitely supported filters and let $\dm_0,\dots,\dm_s$ be $d\times d$ dilation matrices. Suppose $\psi^0,\tilde{\psi}^0\in\dLr{2}{r}$ are compactly supported standard refinable vector functions satisfying \er{ref:f} and $\ol{\wh{\psi^0}(0)}^{\tp}\wh{\tilde{\psi^0}(0)}=1$. Define $\psi^0,\dots,\psi^s,\tilde{\psi}^0,\tilde{\psi}^2\in\dLp{2}$ via \er{aff}. Then $(\{\psi^l!\dm_l\}_{l=0}^s,\{\tilde{\psi}^l!\dm_l\}_{l=0}^s)$ is a biorthogonal wavelet in $\dLp{2}$ if the following conditions are satisfied:

\begin{enumerate} 
	
	\item[(i)] $(\{b_l!\dm_l\}_{l=0}^s,\{\tilde{b}_l!\dm_l\}_{l=0}^s)$ is a biorthogonal wavelet filter bank with mixed sampling factors. 
	
	\item[(ii)] $\{b_l!\dm_l\}_{l=0}^s$ $\{\tilde{b}_l!\dm_l\}_{l=0}^s$ and  have stability in $\dlp{2}$.
	
	\item[(iii)]  The biorthogonality relation
	\be\label{bio:re}\la \tilde{\psi}^0,\psi^0(\cdot-k)\ra_{\dLp{2}}=\td(k)I_r\ee
	holds for all $k\in\dZ$.
	
\end{enumerate}
Conversely, if $(\{\psi^l!\dm_l\}_{l=0}^s,\{\tilde{\psi}^l!\dm_l\}_{l=0}^s)$ is a biorthogonal wavelet in $\dLp{2}$ and assume in addition that \er{llbs} holds for some constant $C'>0$, then items (i)-(iii) hold.

\end{theorem}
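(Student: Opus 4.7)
The plan is to prove the two implications separately. The forward direction ((i)--(iii) $\Rightarrow$ biorthogonal wavelet) uses Theorem~\ref{charfrl2} to supply the dual framelet structure in $\dLp{2}$, then upgrades the pair of dual frames to a pair of biorthogonal bases by verifying the biorthogonality relations of the affine systems through Fourier-domain bracket product computations. The converse runs these computations in reverse, using the lower bracket bound \er{llbs} to remove the refinable factors.

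For the forward direction, first observe that by Proposition~\ref{wvdef} a biorthogonal wavelet filter bank is in particular a dual framelet filter bank. Combined with (ii), Theorem~\ref{charfrl2} then gives that $(\{\psi^l!\dm_l\}_{l=0}^s,\{\tilde{\psi}^l!\dm_l\}_{l=0}^s)$ is a dual framelet in $\dLp{2}$. To promote this dual frame pair to a pair of biorthogonal bases, it suffices to verify the biorthogonality of their elements, since dual frames that are biorthogonal are automatically a pair of biorthogonal Riesz bases. The scale-zero relation $\la\tilde{\psi}^0(\cdot-k'),\psi^0(\cdot-k)\ra_{\dLp{2}}=\td(k-k')I_r$ is just (iii) after translation. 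For the remaining inner products
$$\la\tilde{\psi}^{l'}_{\dm_0^{j'};\dm_0^{-1}\dm_{l'}k'},\,\psi^l_{\dm_0^j;\dm_0^{-1}\dm_l k}\ra_{\dLp{2}},$$
I would use Lemma~\ref{br} to turn each one into a Fourier-domain integral containing a bracket product, then expand the generators via \er{aff} to factor out the filters $\wh{b}_l$ and $\wh{\tilde{b}}_{l'}$. By Lemma~\ref{brfr}, assumption (iii) is equivalent to $[\wh{\tilde{\psi}^0},\wh{\psi^0}](\xi)=I_r$ for a.e.\ $\xi$, which collapses the bracket product to $I_r$ and leaves an integral over a fundamental domain that is purely in the filters. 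That integral is then evaluated by the appropriate case of the filter bank biorthogonality \er{bi:wfb}, after applying the coset decomposition \er{sub:lat} to pass from the full lattice $\Omega_l$ to the sublattice $\Omega_l\cap\Omega_{l'}$ on which \er{bi:wfb} is formulated.

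For the converse, assume $(\{\psi^l!\dm_l\}_{l=0}^s,\{\tilde{\psi}^l!\dm_l\}_{l=0}^s)$ is a biorthogonal wavelet in $\dLp{2}$ and that \er{llbs} holds. A pair of biorthogonal bases is in particular a pair of dual frames, so the converse part of Theorem~\ref{charfrl2} (whose hypothesis is precisely \er{llbs}) yields simultaneously that $(\{b_l!\dm_l\}_{l=0}^s,\{\tilde{b}_l!\dm_l\}_{l=0}^s)$ is a dual framelet filter bank and that both filter banks have stability in $\dlp{2}$; this is (ii). Assumption (iii) is read off directly from the scale-zero biorthogonality of the refinable-function system. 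To upgrade the dual framelet filter bank to a biorthogonal wavelet filter bank, it remains to verify \er{bi:wfb}, which I would do by running the forward-direction Fourier computation in reverse: the known biorthogonality of the affine system elements forces the filter-plus-bracket-product integrals to equal the prescribed Kronecker deltas, and the lower bound \er{llbs} implies, as in the closing argument of the proof of Theorem~\ref{charfrl2}, that $\{\wh{\psi^0}(\xi+2\pi k):k\in\dZ\}$ and $\{\wh{\tilde{\psi}^0}(\xi+2\pi k):k\in\dZ\}$ each span $\C^r$ for a.e.\ $\xi$, which justifies stripping off the $\wh{\psi^0}$ and $\wh{\tilde{\psi}^0}$ factors and recovering \er{bi:wfb} pointwise.

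The main obstacle is the bookkeeping in the forward direction when one propagates biorthogonality across different wavelet indices $l,l'$ at arbitrary scales $j,j'$. The translation lattices $\dm_0^{-1}\dm_l\dZ$ depend on $l$, so the bracket products arising naturally live on an intersection of several sublattices, and matching them to the restricted filter bank identity \er{bi:wfb} on $\Omega_l\cap\Omega_{l'}$ demands careful use of \er{sub:lat}. This is exactly the difficulty that appeared in the proof of the biorthogonal wavelet filter bank characterization (the theorem preceding Lemma~\ref{bfb:csp}) in the opposite direction, and the same coset-splitting argument carries over here.
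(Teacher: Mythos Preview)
Your overall architecture matches the paper's: use Theorem~\ref{charfrl2} for the dual framelet part and then handle biorthogonality separately, in both directions. The genuine difference is in \emph{how} the biorthogonality of the affine systems is tied to the filter bank. You propose a Fourier-domain route: apply Lemma~\ref{br}, expand via \er{aff}, collapse $[\wh{\tilde\psi^0},\wh{\psi^0}]$ to $I_r$ using (iii), and then match the remaining filter integral against \er{bi:wfb} via the coset decomposition \er{sub:lat}. The paper instead works in the time/sequence domain: it expands $\psi^l$ and $\tilde\psi^{l'}$ through the iterated refinement \er{rel:J}, uses (iii) to reduce the $L_2$ inner product $\la\tilde\psi^{l'}_{\dm_0^{j'};\cdot},\psi^l_{\dm_0^{j};\cdot}\ra_{\dLp{2}}$ directly to the $l_2$ inner product $\la \tilde b_{l',j';m},b_{l,j;n}\ra_{\dlp{2}}$ of discrete-affine-system filters (equations \er{biortho:psi}--\er{biortho:j}), and then simply cites Theorem~\ref{wvdas}, which already packages the equivalence between the biorthogonal wavelet filter bank condition and biorthogonality of $\DAS_J$. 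Your approach is correct but effectively re-derives the multilevel content of Theorem~\ref{wvdas} by hand via \er{bi:wfb} and \er{sub:lat}; the paper's approach is shorter and more modular because that work has already been done. One minor point on your converse for (i): once (iii) is established you already have $[\wh{\tilde\psi^0},\wh{\psi^0}]=I_r$, so the ``stripping via \er{llbs}'' step you describe is unnecessary there---the paper likewise uses \er{llbs} only for item (ii) (stability), and recovers (i) purely from (iii) together with the $\AS\!\leftrightarrow\!\DAS$ identity \er{biortho:j} and Theorem~\ref{wvdas}.
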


\bp By Theorem~\ref{charfrl2}, items (i) and (ii) imply that $(\{\psi^l!\dm_l\}_{l=0}^s,\{\tilde{\psi}^l!\dm_l\}_{l=0}^s)$ is a dual framelet in $\dLp{2}$. On the other hand, define $b_{l,j}, \tilde{b}_{l,j}$ as in \er{blj:1} and \er{tblj:1} and define $b_{l,j;k},\tilde{b}_{l,j;k}$ as in \er{b:l:j:k} and \er{tb:l:j:k} for all $l=0,\dots,s, j\in\N$ and $k\in\dZ$. It follows from \er{aff} that
\be\label{rel:J}\psi^l(x)=|\det(\dm_0)|^j\sum_{k\in\dZ}b_{l,j}(k)\psi^0(\dm_0^jx-k),\quad \tilde{\psi}^l(x)=|\det(\dm_0)|^j\sum_{k\in\dZ}\tilde{b}_{l,j}(k)\tilde{\psi}^0(\dm_0^jx-k)\ee
for a.e. $\xi\in\dR$ and for all $l=0,\dots,s$ and $j\in\N$. By calculation, we have
\be\label{biortho:psi}\begin{aligned}
&\la|\det(\dm_0^{-1}\dm_l)|^{\frac{1}{2}}\tilde{\psi}^l_{I_d;\dm_0^{-1}\dm_lm},|\det(\dm_0^{-1}\dm_t)|^{\frac{1}{2}}\psi^t_{\dm_0^{j-1};\dm_0^{-1}\dm_tn}\ra_{\dLp{2}}\\
=&|\det(\dm_0^{-2}\dm_l\dm_t)|^{\frac{1}{2}}|\det(\dm_0)|^{\frac{j-1}{2}}|\det(\dm_0)|^{j+1}\\
&\quad\times\sum_{p,q\in\dZ}\tilde{b}_{l,j}(p-\dm_0^{j-1}\dm_lm)\left(\int_{\dR}\tilde{\psi}^0(x-p)\ol{\psi^0(x-q)}^{\tp}dx\right)\ol{b_t(q-M_tn)}^{\tp}\\
=&|\det(\dm_0)|^{\frac{j-1}{2}}|\det(\dm_l)|^{\frac{1}{2}}|\det(\dm_t)|^{\frac{1}{2}}\\
&\quad\times\sum_{p,q\in\dZ}\tilde{b}_{l,j}(p-\dm_0^{j-1}\dm_lm)\left(\int_{\dR}\tilde{\psi}^0(x-p)\ol{\psi^0(x-q)}^{\tp}dx\right)\ol{b_t(q-\dm_tn)}^{\tp}\\
=&\sum_{p,q\in\dZ}\tilde{b}_{l,j;m}(p)\la \psi^0(\cdot-p),\tilde{\psi}^0(\cdot-q)\ra_{\dLp{2}}\ol{b_{t,1;n}(q)}^{\tp}\\
=&\sum_{p,q\in\dZ}\delta_0(p-q)\tilde{b}_{l,j;m}(p)\ol{b_{t,1;n}(q)}^{\tp}\\
=&\sum_{p\in\dZ}\tilde{b}_{l,j;m}(p)\ol{b_{t,1;n}(p)}^{\tp}\\
=&\la \tilde{b}_{l,j;m},b_{t,1;n}\ra_{\dlp{2}},
\end{aligned}\ee
for all $l,t=0,\dots,s$, $j\in\N$ and $m,n\in\dZ$. Similarly, by a simple scaling technique, one can prove that
\be\label{biortho:j}\begin{aligned}
&\la|\det(\dm_0^{-1}\dm_l)|^{\frac{1}{2}}\psi^l_{\dm_0^{j'};\dm_0^{-1}\dm_lm},|\det(\dm_0^{-1}\dm_l)|^{\frac{1}{2}}\tilde{\psi}^t_{\dm_0^{j-1};\dm_0^{-1}\dm_tn}\ra_{\dLp{2}}=&\la b_{l,j;m},\tilde{b}_{t,j';n}\ra_{\dlp{2}}
\end{aligned}\ee
for all $l,t=0,\dots,s$, $m,n\in\dZ$ and $j,j'\in\N$. By Theorem~\ref{wvdas}, we see that $(\AS(\{\psi^l!\dm_l\}_{l=0}^s),\AS(\{\tilde{\psi}^l!\dm_l\}_{l=0}^s))$ is a pair of biorthogonal sequences in $\dLp{2}$. Hence $(\{\psi^l!\dm_l\}_{l=0}^s,\{\tilde{\psi}^l!\dm_l\}_{l=0}^s)$ is a biorthogonal wavelet in $\dLp{2}$.\\

Conversely, suppose that $(\{\psi^l!\dm_l\}_{l=0}^s,\{\tilde{\psi}^l!\dm_l\}_{l=0}^s)$ is a biorthogonal wavelet in $\dLp{2}$. Then item (iii) trivially holds. \\


Next, by the frame property of $\AS\{\psi^l!\dm_l\}_{l=0}^{s}$ and $\AS\{\tilde{\psi}^l!\dm_l\}_{l=0}^{s}$, \cite[Proposition 4.4.13]{hanbook} yields that \er{ulbs} holds for some constant $C>0$. If in addition that \er{llbs} holds for some $C'>0$, then by the proof of Theorem~\ref{charfrl2}, we conclude that $\{b_l!\dm_l\}_{l=0}^s$ and $\{\tilde{b}_l!\dm_l\}_{l=0}^s$ have stability in $\dlp{2}$. This proves item (ii).\\

Finally, using \er{biortho:psi}, \er{biortho:j} and the biorthogonality of $(\AS\{\psi^l!\dm_l\}_{l=0}^{s},\AS\{\tilde{\psi}^l!\dm_l\}_{l=0}^{s})$, we see that $(\DAS\{b^l!\dm_l\}_{l=0}^{s},\DAS\{\tilde{b}^l!\dm_l\}_{l=0}^{s})$ is a pair of biothogonal sequences. Hence, by Theorem~\ref{wvdas}, we conclude that $(\{b_l!\dm_l\}_{l=0}^s,\{\tilde{b}_l!\dm_l\}_{l=0}^s)$ is a biorthogonal wavelet filter bank with mixed sampling factors. This proves item (i).

\ep

\end{document}